\newtheorem{thm}{Theorem}[section]
\newtheorem{lem}[thm]{Lemma}
\theoremstyle{definition}
\newtheorem{defn}[thm]{Definition}
\newtheorem{prop}[thm]{Proposition}
\newtheorem{conj}[thm]{Conjecture}
\newtheorem{cor}[thm]{Corollary}
\newtheorem{rmk}[thm]{Remark}
\newtheorem{qst}[thm]{Question}
\newcommand{\F}{\mathbb{F}}
\newcommand{\Z}{\mathbb{Z}}
\newcommand{\T}{\overline{T}}
\title{The Collatz map analogue in polynomial rings and in completions}
\author{Angelot Behajaina}
\email{abeha@campus.technion.ac.il}
\address{Department of Mathematics, Technion - Israel Institute of Technology, Haifa, Israel}
\address{Department of Mathematics and Computer Science, The Open University of Israel}
\author{Elad Paran}
\email{paran@openu.ac.il}
\address{Department of Mathematics and Computer Science, The Open University of Israel}
\begin{document}

\maketitle
\begin{abstract}

We study an analogue of the Collatz map in the polynomial ring $R[x]$, where $R$ is an arbitrary commutative ring. We prove that if $R$ is of positive characteristic, then every polynomial in $R[x]$ is eventually periodic with respect to this map. This extends previous works of the authors and of Hicks, Mullen, Yucas and Zavislak, who studied the Collatz map on $\F_p[x]$ and $\F_2[x]$, respectively. We also consider the Collatz map on the ring of formal power series $R[[x]]$ when $R$ is finite: we characterize the eventually periodic series in this ring, and give formulas for the number of cycles induced by the Collatz map, of any given length. We provide similar formulas for the original Collatz map defined on the ring $\Z_2$ of $2$-adic integers, extending previous results of Lagarias. 

\end{abstract}
\section{Introduction}

The classical Collatz map is defined for any natural number $n$ by $T(n) = n/2$ if $n$ is even and $T(n) = 3n+1$ if $n$ is odd. The Collatz conjecture states that for any $n \geq 1$ we have $\big(T^k(n),T^{k+1}(n),\ldots \big) = \big(1,4,2,1,4,2,\ldots\big)$ for some $k \geq 1$. While some progress has been made in recent years by Tao \cite{Tao2022}, the conjecture remains wide open. For a survey on the history of this question, see \cite{Lagarias2010}.

In the recent work \cite{BP23}, inspired by the work \cite{Hicks} of Hicks, Mullen, Yucas and Zavislak, the authors consider an analogue of the Collatz map, defined on the ring $\F_p[x]$ of polynomials over a prime field $\F_p$, by:

$$
T(f)=\begin{cases}
f \cdot (x+1)-f_0 &\textrm{if}\,\, f_0 \neq 0,\\
\frac{f}{x} & \textrm{otherwise}.
\end{cases}
$$

(Here, $f_0$ denotes the constant coefficient of  $f$.) One says that a polynomial $f$ is {\bf periodic} (with respect to $T$) if there exists $k \geq 1$ such that $T^k(f) = f$. If $f$ is periodic and such a $k$ is minimal, then we call the sequence $(f,T(f),T^2(f),\ldots,T^{k-1}(f))$ a {\bf $T$-cycle}. One says that $f$ is {\bf eventually periodic} or {\bf pre-periodic} (with respect to $T$), if there exists $m \geq 1$ such that $T^m(f)$ is periodic. 

The main result of \cite{BP23} is that every polynomial $f \in \F_p[x]$ is eventually periodic with respect to $T$, in a quadratic number in $\deg(f)$ of $T$-iterations, and precise description of all $T$-cycles is given, see \cite[Theorem 1.1]{BP23}. This extends the main result of \cite{Hicks}, where the case of $\F_2$ is analyzed. In \cite{BP23} the authors also considered the extension of $T$ to the formal power series ring $\F_p[[x]]$, and determined the eventually periodic elements in this ring with respect to $T$, see \cite[Theorem 1.2]{BP23}.

The goal of the present work is to study the Collatz map in $R[x]$ and $R[[x]]$, given by the same definition as above, where $R$ is any commutative ring (possibly infinite). In the sequel, we assume $R$ to be a {\bf nonzero ring}. Concerning the polynomial ring $R[x]$, our main result is the following:

\begin{thm}\label{main:thm1}
Let $R$ be a commutative ring of characteristic $\mathfrak{N}$.

\begin{enumerate}
\item If $\mathfrak{N}=0$, then the only  $T$-cycles in $R[x]$ are $(0)$ and those of the form $( a,ax)$, for all $0 \neq a \in R$.
\item Assume $\mathfrak{N}=\prod_{i=1}^sp_i^{\alpha_i}>0$, where $s \geq 1$, $p_1,\dots,p_s$ are distinct primes, and $\alpha_1,\dots,\alpha_s \geq 1$. Then:
\begin{itemize}
\item Every $f \in R[x] $ is eventually periodic with respect to $T$. 
\item Let $f=\sum_{k=0}^{n}b_kx^k \in R[x]$ be a periodic polynomial with $b_0, b_n \neq 0$ and let $\rho_T(f)$ denote the length of the corresponding $T$-cycle. Then  
\begin{equation}\label{eq:periodivmain}
\rho_T(f) \mid 2 \prod_{i=1}^s p_i^{\alpha_i+\lfloor \log_{p_i}(\deg(f)) \rfloor}.
\end{equation} 
Moreover if $b_{n}$ is in $R^\times$, then we have equality in \eqref{eq:periodivmain}:
\begin{equation*}
\rho_T(f) = 2 \prod_{i=1}^s p_i^{\alpha_i+\lfloor \log_{p_i}(\deg(f)) \rfloor}.
\end{equation*}

\end{itemize}
\end{enumerate}
\end{thm}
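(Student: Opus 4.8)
The plan rests on two elementary features of the $T$-dynamics, valid over any $R$. First, a multiply step cannot be followed by another one: if $f_0\ne0$ then $T(f)=f\cdot(x+1)-f_0$ has constant coefficient $f_0\cdot1-f_0=0$, so $T^2(f)=T(f)/x$. Second, $\deg T(f)=\deg f+1$ on a multiply step and $\deg T(f)=\deg f-1$ on a divide step of a nonzero polynomial. Since the degree returns to its original value along a $T$-cycle, the numbers of multiply and divide steps along a cycle agree; combined with ``no two adjacent multiplies'' this forces every $T$-cycle of length $>1$ to alternate multiply and divide. Hence a $T$-cycle of length $2m$ is exactly a length-$m$ periodic orbit consisting of polynomials with nonzero constant coefficient, under the map $S(g):=\big((x+1)g-g(0)\big)/x$ (which coincides with $T^2$ on such polynomials). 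The point is that $S$ preserves both the degree and the leading coefficient, and on the coefficient vector $(c_0,\dots,c_n)$ of a polynomial of degree $n$ it acts by $(c_0+c_1,\,c_1+c_2,\,\dots,\,c_{n-1}+c_n,\,c_n)$, i.e.\ by the unipotent matrix $M_n=I+N$ with $N$ the nilpotent shift. So if $f=\sum_{k=0}^nb_kx^k$ is periodic with $b_0,b_n\ne0$ and $v=(b_0,\dots,b_n)$, then $m:=\rho_T(f)/2$ is the least positive integer with $M_n^mv=v$, equivalently with $\big(\sum_{j\ge1}\binom mjN^j\big)v=0$.

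In part (2), eventual periodicity is soft: the coefficients of every $T^k(f)$ lie in the additive subgroup of $R$ generated by $b_0,\dots,b_n$, which is finite because $\operatorname{char}R=\mathfrak N$, while $\deg T^k(f)\le\deg f+1$ for all $k$ since every initial block of moves has at most one more multiply than divide; so the orbit lies in a finite set. For the divisibility \eqref{eq:periodivmain}, set $m_0=\prod_ip_i^{\alpha_i+\lfloor\log_{p_i}(\deg f)\rfloor}$. From $\binom{m_0}{j}=\tfrac{m_0}{j}\binom{m_0-1}{j-1}$ one gets $v_{p_i}\!\big(\binom{m_0}{j}\big)\ge v_{p_i}(m_0)-v_{p_i}(j)\ge\alpha_i$ for all $1\le j\le\deg f$, using $v_{p_i}(j)\le\lfloor\log_{p_i}(\deg f)\rfloor$; as the $p_i$ are distinct this gives $\binom{m_0}{j}\equiv0\pmod{\mathfrak N}$ for every such $j$, hence $M_n^{m_0}=I$ already over $\Z/\mathfrak N$ and therefore over $R$. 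Thus $M_n^{m_0}v=v$, so minimality of $m$ yields $m\mid m_0$ and $\rho_T(f)=2m\mid2m_0$.

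For the equality when $b_n\in R^\times$, I would extract the components of $\big(\sum_{j\ge1}\binom mjN^j\big)v=0$ from the top: the component in position $n-\ell$ is $\sum_{j=1}^\ell\binom mjb_{n-\ell+j}=0$. For $\ell=1$ this reads $m\,b_n=0$, so $b_n$ a unit forces $\mathfrak N\mid m$; inductively, once $\mathfrak N\mid\binom mj$ for $j<\ell$ those terms vanish and $\binom m\ell b_n=0$ remains, giving $\mathfrak N\mid\binom m\ell$. Hence $\mathfrak N\mid\binom mj$ for all $1\le j\le\deg f$. A converse $p$-adic computation then forces $v_{p_i}(m)\ge\alpha_i+\lfloor\log_{p_i}(\deg f)\rfloor$ for each $i$: taking $j=p_i^{\,b}$ with $b=\lfloor\log_{p_i}(\deg f)\rfloor$ and using that $\binom{m-1}{p_i^{\,b}-1}$ is prime to $p_i$ once $p_i^{\,b}\mid m$, one reads off $v_{p_i}\!\big(\binom m{p_i^{\,b}}\big)=v_{p_i}(m)-b$, whence the bound. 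So $m_0\mid m$, and with $m\mid m_0$ this gives $m=m_0$.

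For part (1) the same reduction applies, and the topmost relation is $m\,b_n=0$. Over a ring of characteristic $0$ one has $m\cdot1_R\ne0$, so if $b_n$ is not a zero-divisor — in particular if $R$ is a domain — this forces $n=\deg f=0$; then $f=b_0$ is a nonzero constant and its cycle is $(b_0,b_0x)$, while the only cycle of length $1$ is $(0)$ (a fixed point with $f_0\ne0$ is impossible, and $f/x=f$ forces $f=0$). The two places I expect to need real work are: the arithmetic of the binomial coefficients $\binom mj$ modulo $\mathfrak N$ — both that $m_0$ works and, more delicately, that it is optimal when $b_n$ is a unit, which hinge on the sharp form of Kummer's theorem — and, in part (1), the case of rings with zero-divisors, where $m\,b_n=0$ no longer closes the argument and one must either squeeze more out of the lower-order relations $\sum_j\binom mjb_{n-\ell+j}=0$ or impose an additional hypothesis on $R$ such as being reduced.
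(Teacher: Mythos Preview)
Your reduction to the map $S$ (the paper's $\Pi$) and the binomial-coefficient analysis of $(I+N)^m$ is exactly the paper's method; the divisibility $\rho_T(f)\mid 2m_0$ and the equality when $b_n\in R^\times$ are proved there along the same lines, packaged via Kummer's theorem rather than your identity $\binom{m}{j}=\frac{m}{j}\binom{m-1}{j-1}$. One small point: your converse step ``once $p_i^{\,b}\mid m$'' reads circularly as written; make it a bootstrap, using $j=1,p_i,p_i^2,\dots$ in turn so that each stage supplies the divisibility needed to evaluate $v_{p_i}\!\binom{m}{p_i^{\ell}}$ at the next (this is the content of the paper's Lemma~\ref{lem:calded}).

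Your argument for eventual periodicity in part~(2) is genuinely different from, and simpler than, the paper's. The paper passes to a minimal-degree element of the orbit and invokes a periodicity criterion (Lemma~\ref{crit:periodicity}) to close up; you instead observe that the orbit lies in the finite set of polynomials of degree $\le n+1$ with coefficients in the additive subgroup $\langle b_0,\dots,b_n\rangle\subset R$, which is a finitely generated $\mathbb Z/\mathfrak N$-module and hence finite. This is correct and short-circuits what the paper flags as ``the main novelty'' of the infinite-$R$ case.

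You are right to be uneasy about part~(1) for rings with zero-divisors, and the paper does not do better: from $\sum_{j\ge1}\binom{m}{j}L^j(f)=0$ the sharpest top-order relation one can extract is $m\cdot b_n=0$, and the paper's proof implicitly treats this as forcing $b_n=0$. In fact the statement is false in this generality. Take $R=\mathbb Z\times\mathbb Z/2$ (which has characteristic $0$) and $f=(1,0)+(0,1)x$; then
\[
T(f)=(1,1)x+(0,1)x^2,\quad T^2(f)=(1,1)+(0,1)x,\quad T^3(f)=(1,0)x+(0,1)x^2,\quad T^4(f)=f,
\]
a $T$-cycle of length $4$ not of the form $(a,ax)$. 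So part~(1) requires an extra hypothesis on $R$ --- for instance that nonzero integers act as non-zero-divisors, which covers all domains --- exactly as you suspected.
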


We note that in the case where $R$ is finite, the fact that every polynomial in $R[x]$ is  eventually periodic is immediate, see our remark following Lemma \ref{lem:basiresT} below. The case where $R$ is an infinite ring of positive characteristic is more complicated and is the main novelty of Theorem \ref{main:thm1}.

In \cite{Lag90}, Lagarias considered the extension of the classical Collatz map to the ring $\Z_2$ of $2$-adic integers.  He  proved that under the ``condensed" Collatz map:
\begin{align*}
\overline{T} : \mathbb{Z}_2 & \rightarrow \mathbb{Z}_2\\
           f &\mapsto \begin{cases}
    \frac{f}{2} & \textrm{if}\,\, f \equiv 0 \pmod{2},\\
    \frac{3f+1}{2} & \textrm{otherwise},
           \end{cases}
\end{align*} the number of cycles of length $n \geq 1$ is 
$$
I(n)=\frac{1}{n}\sum_{d|n} \mu(d)2^{n/d},
$$ where $\mu$ denotes the M\"{o}bius function. As a consequence, he proved that $I(n) \sim \frac{2^n}{n}$ as $n \rightarrow \infty$. The ring $\Z_2$ is the $2$-adic completion of $\Z$, and its formal analogue is the ring $R[[x]]$ of formal power series (the $x$-adic completion of $R[x]$). For $R[[x]]$ with $R$ a finite ring, inspired by the method used in \cite{Lag90}, we characterize all eventually $T$-periodic elements, prove that there are finitely many cycles of any possible length for the map $T$ itself, and provide a formula for the number of such cycles: 

\begin{thm}\label{main:thm2}
Let $R$ be any {\bf finite} ring of cardinality $q$.
\begin{enumerate}
\item The eventually $T$-periodic power series in $R[[x]]$ are precisely series of the form $u(1+xv)^{-1}$, where $u,v$ are polynomials in $R[x]$.
\item Let $n \geq 1$. Then the number $Z_n$ of $T$-cycles of length $n$ in $R[[x]]$ is finite. Moreover, letting $b=4q-3$, $\alpha=\frac{1+\sqrt{b}}{2}$ and $\beta=\frac{1-\sqrt{b}}{2}$, we have 
\begin{equation*}
Z_n=\frac{1}{n}\sum_{d|n}\mu(d)\left(\alpha^{\frac{n}{d}}+\beta^{\frac{n}{d}}\right).
\end{equation*} Asymptotically, we have
$$
Z_n \sim \frac{\alpha^n}{n},
$$ as $n \rightarrow  \infty$.
\end{enumerate}
\end{thm}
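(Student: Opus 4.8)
The proof treats the two parts of the statement separately, and both rest on the subring
\[
  \mathcal{S} \;=\; \bigl\{\, p/q \;:\; p,q\in R[x],\ q(0)\in R^\times \,\bigr\}\;\subseteq\; R[[x]]
\]
of ``rational'' power series whose denominator has a unit constant term; note that $\mathcal{S}$ is exactly the set of series $u(1+xv)^{-1}$ with $u,v\in R[x]$ (given $p/q$ with $q(0)\in R^\times$, take $u=q(0)^{-1}p$ and write $q(0)^{-1}q=1+xv$). The plan for part (1) is to show that $\mathcal{S}$ is forward‑ and backward‑invariant under $T$ and that every element of $\mathcal{S}$ is eventually $T$‑periodic; the plan for part (2) is to biject $\mathrm{Fix}(T^{n}):=\{f\in R[[x]]:T^{n}(f)=f\}$ with a set of ``decorated cyclic words'', count the latter with a $2\times 2$ transfer matrix, and Möbius‑invert. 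Throughout I would use the elementary observation that an ``odd'' step is always followed by an ``even'' one: if $f_{0}\ne 0$ then $T(f)=(x+1)f-f_{0}$ has zero constant term, so $T^{2}(f)=T(f)/x$.

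For part (1): forward invariance of $\mathcal{S}$ is immediate, since for $f=p/q$ with $q(0)=1$ one has either $p(0)=0$, $T(f)=(p/x)/q$, or $p(0)\ne 0$, $T(f)=\bigl((x+1)p-p(0)q\bigr)/q$, keeping the denominator $q$; tracking numerator degrees, an even step lowers the degree by $1$ while an odd step raises it by at most $1$ but is immediately cancelled by the following even step, so along the whole $T$‑orbit of $f$ the numerator over the fixed denominator $q$ has degree at most $\max(\deg p,\deg q)+1$, which — since $R$ is finite — confines the orbit to a finite set, hence $f$ is eventually periodic. Backward invariance: if $T(h)\in\mathcal{S}$ then $h=x\,T(h)\in\mathcal{S}$ when $h_{0}=0$, and $h=\bigl(T(h)+h_{0}\bigr)/(x+1)\in\mathcal{S}$ when $h_{0}\ne 0$. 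Finally any periodic $g$ lies in $\mathcal{S}$: reading its parity pattern over one period as $(\varepsilon_{0},\dots,\varepsilon_{N-1})$, the relations between consecutive orbit members are $g^{(i)}=\Psi_{i}(g^{(i+1)})$ with $\Psi_{i}(h)=xh$ if $\varepsilon_{i}=0$ and $\Psi_{i}(h)=(h+c_{i})/(x+1)$ if $\varepsilon_{i}=1$; these are invertible affine maps of $R[[x]]$, their composition around the cycle is affine with linear part $\Lambda=x^{a}(x+1)^{a-N}$ where $a=\#\{i:\varepsilon_{i}=0\}\ge 1$ (not all $\varepsilon_{i}$ can be $1$), so $1-\Lambda$ has constant term $1$, is a unit, and $g=(1-\Lambda)^{-1}\cdot(\text{affine shift})\in\mathcal{S}$. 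Combining these facts gives: $f$ eventually periodic $\iff f\in\mathcal{S}$, which is part (1).

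For part (2) the crux is a bijection between $\mathrm{Fix}(T^{n})$ and the set of pairs $(\varepsilon,c)$ where $\varepsilon\colon\Z/n\to\{0,1\}$ has no two cyclically adjacent $1$'s and $c$ assigns to each $i$ with $\varepsilon_{i}=1$ an element $c_{i}\in R\setminus\{0\}$. Forward: a fixed point $f$ of $T^{n}$ yields such an $\varepsilon$ from the parities of $f,T(f),\dots,T^{n-1}(f)$ (admissible by the observation above) together with the constant terms $c_{i}$ of its ``odd'' members. Backward: given $(\varepsilon,c)$, the composition $\Psi_{0}\circ\cdots\circ\Psi_{n-1}$ of the maps above is affine with linear part $\Lambda=x^{a}(x+1)^{a-n}$, $a\ge 1$, so $1-\Lambda$ is a unit and there is a \emph{unique} fixed point $f^{(0)}\in\mathcal{S}$; one then checks that $f^{(0)}$ and its cyclic shifts $f^{(i)}$ have exactly the prescribed constant terms (forced by the shape of the $\Psi_{i}$ together with admissibility of $\varepsilon$), so that $T(f^{(i)})=f^{(i+1)}$ and $f^{(0)}\in\mathrm{Fix}(T^{n})$; the two maps are mutually inverse. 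Hence $|\mathrm{Fix}(T^{n})|=\sum_{\varepsilon}(q-1)^{\#\{i:\varepsilon_{i}=1\}}=\tr(A^{n})$ with $A=\begin{psmallmatrix}1 & q-1\\ 1 & 0\end{psmallmatrix}$, and since the eigenvalues of $A$ are the roots $\tfrac{1\pm\sqrt{4q-3}}{2}=\alpha,\beta$ of $t^{2}-t-(q-1)$ we get $|\mathrm{Fix}(T^{n})|=\alpha^{n}+\beta^{n}$; in particular it is finite, so each $Z_{n}$ is finite. Writing $|\mathrm{Fix}(T^{n})|=\sum_{d\mid n}d\,Z_{d}$ and Möbius‑inverting yields $Z_{n}=\tfrac1n\sum_{d\mid n}\mu(d)(\alpha^{n/d}+\beta^{n/d})$. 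Since $q\ge 2$ we have $\alpha>1$, $\beta<0$ and $|\beta|=\alpha-1<\alpha$, so the $d=1$ term dominates and $Z_{n}=\tfrac{\alpha^{n}}{n}+o\!\bigl(\tfrac{\alpha^{n}}{n}\bigr)$, i.e.\ $Z_{n}\sim\alpha^{n}/n$.

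The step I expect to be the real obstacle is the backward direction of the bijection in part (2): one must verify not merely that the affine fixed‑point equation has a unique solution, but that this solution genuinely realises a $T$‑orbit — that each component $f^{(i)}$ has precisely the constant term dictated by $(\varepsilon,c)$ — which is where the admissibility of $\varepsilon$ (no consecutive $1$'s) is used essentially, and where one must handle the cyclic indexing so that the reconstruction is independent of the chosen base point of the cycle. Once the identity $|\mathrm{Fix}(T^{n})|=\alpha^{n}+\beta^{n}$ is established, the transfer‑matrix count, the Möbius inversion, and the asymptotic estimate are routine.
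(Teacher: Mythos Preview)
Your proof is correct and follows the same overall architecture as the paper: for part~(1) you show that the set $\mathcal{S}$ of rational series is $T$-stable in both directions and that every $T$-orbit inside it is finite; for part~(2) you biject $\mathrm{Fix}(T^{n})$ with cyclically zero-dense words in $R^{n}$, count those, and M\"obius-invert. The verification you flag as the obstacle (that the affine fixed point really has the prescribed constant terms) goes through exactly as you indicate: when $\varepsilon_{i}=1$ admissibility forces $\varepsilon_{i+1}=0$, so $f^{(i+1)}=xf^{(i+2)}$ has constant term $0$, whence $f^{(i)}_{0}=f^{(i+1)}_{0}+c_{i}=c_{i}\neq 0$ and $T(f^{(i)})=f^{(i+1)}$.

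Where you diverge from the paper is in execution, and in both places your route is shorter. First, the paper introduces the condensed map $\overline{T}(f)=((x+1)f-f_{0})/x$ and proves the analogue of your bijection for $\overline{T}$ (their Proposition~3.2), then transfers it to $T$ via a book-keeping lemma relating $T^{n}$ and $\overline{T}^{\,n-s}$; you bypass $\overline{T}$ entirely by writing the inverse branches $\Psi_{i}$ of $T$ and solving the affine fixed-point equation directly. Second, for the enumeration the paper counts the (non-cyclic) zero-dense words $e_{n}$ via a Fibonacci-type recurrence, relates $j_{n}=|\mathrm{Fix}(T^{n})|$ to them by $j_{n}=e_{n-1}+(q-1)e_{n-3}$, and only then simplifies to $\alpha^{n}+\beta^{n}$; your transfer-matrix identity $j_{n}=\tr(A^{n})=\alpha^{n}+\beta^{n}$ with $A=\bigl(\begin{smallmatrix}1&q-1\\1&0\end{smallmatrix}\bigr)$ reaches the closed form in one step. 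Both shortcuts are genuine simplifications, though the paper's detour through $\overline{T}$ has the side benefit of making the parallel with Lagarias's $\mathbb{Z}_{2}$ analysis explicit.
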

\begin{rmk}
The first part of Theorem \ref{main:thm2} cannot be extended to the case where $R$ has characteristic $0$. In fact, there are polynomials in $R[x]$ that are not eventually $T$-periodic. For example, when $R=\mathbb{Z}$, we have $T^{2k}(x+1)=x+k+1$ for all $k \geq 0$; hence $x+1$ is not eventually $T$-periodic by the first part of Theorem \ref{main:thm1}.
\end{rmk}

We note that the second part of Theorem \ref{main:thm2} is novel even in the case where $R = \F_p$ is a prime field -- 
this question was not studied in \cite{BP23}.

Lagarias's mentioned result on the number of cycles is given for the condensed Collatz map $U$. For the original Collatz map, the authors could not find available formulas in the literature. Therefore, to complete the picture and see the analogy between $R[[x]]$ and $\Z_2$, we compute here the number of cycles for the usual Collatz map $T$ on $\Z_2$, of any given length, and obtain the following result:

\begin{thm}\label{main_z2}
Let $n \geq 1$. Then the number $Z_n$ of $T$-cycles of length $n$ in $\mathbb{Z}_2$ is finite, given by:
\begin{align*}
Z_n=\frac{1}{n}\sum_{d|n}\mu(d) \left( \phi^{\frac{n}{d}}+\psi^{\frac{n}{d}}\right),
\end{align*}
where $\phi=\frac{1+\sqrt{5}}{2}$ and $\psi=\frac{1-\sqrt{5}}{2}$.
Asymptotically, we have
$$
Z_n \sim \frac{\phi^n}{n},
$$ as $n \rightarrow  \infty$.\end{thm}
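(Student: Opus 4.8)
The plan is to follow the same scheme as the proof of Theorem~\ref{main:thm2}(2) specialized to the ``$q=2$'' situation, with parities of $2$-adic integers playing the role of constant coefficients and completeness of $\Z_2$ replacing completeness of $R[[x]]$. The key structural fact is that if $f\in\Z_2$ is odd then $T(f)=3f+1$ is even; hence along any $T$-orbit an ``odd step'' is always immediately followed by an ``even step''. To an orbit $f=f_0,f_1=T(f_0),\dots$ attach its \emph{address vector} $\mathbf a=(a_0,a_1,\dots)$ with $a_i=1$ if $f_i$ is odd and $a_i=0$ if $f_i$ is even; the observation says $a_i=1\Rightarrow a_{i+1}=0$. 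Call $\mathbf a\in\{0,1\}^n$ \emph{admissible} if it has no two cyclically consecutive $1$'s. I would show that sending a point of period dividing $n$ to its length-$n$ address vector is a bijection onto the admissible vectors in $\{0,1\}^n$, and then count the latter.

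For this coding lemma I would use the two inverse branches of $T$: $\tau_0(y)=2y$ (giving an even preimage, valid for all $y$) and $\tau_1(y)=(y-1)/3$ (giving an odd preimage, which is \emph{actually} odd exactly when $y$ is even), both mapping $\Z_2$ into $\Z_2$ since $3\in\Z_2^\times$. Given an admissible $\mathbf a\in\{0,1\}^n$, form the affine map $\Psi_{\mathbf a}=\tau_{a_0}\circ\tau_{a_1}\circ\cdots\circ\tau_{a_{n-1}}\colon\Z_2\to\Z_2$; its linear coefficient is $2^{m}/3^{k}$ with $m=\#\{i:a_i=0\}$, $k=\#\{i:a_i=1\}$, and admissibility forces $m\ge1$ (the all-$1$'s vector is never admissible for $n\ge1$), so $v_2(2^m/3^k)=m\ge1$ and $\Psi_{\mathbf a}$ is a $2$-adic contraction of the complete ring $\Z_2$, hence has a unique fixed point $x^\ast$. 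Setting $x^\ast_n=x^\ast$ and $x^\ast_i=\tau_{a_i}(x^\ast_{i+1})$ for $i=n-1,\dots,0$ (so $x^\ast_0=x^\ast_n$), one checks step by step that $T(x^\ast_i)=x^\ast_{i+1}$ and that $x^\ast_i$ has parity $a_i$: the case $a_i=0$ is immediate, and when $a_i=1$ admissibility gives $a_{i+1}=0$, so $x^\ast_{i+1}=2x^\ast_{i+2}$ is even and $x^\ast_i=(x^\ast_{i+1}-1)/3$ is genuinely odd. Thus $x^\ast$ is $T$-periodic with period dividing $n$ and address $\mathbf a$; conversely any point of period dividing $n$ has an admissible address vector and, running the same bookkeeping backwards, is the fixed point of the corresponding $\Psi_{\mathbf a}$. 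This yields the bijection, and in particular the number $P(n)$ of $x\in\Z_2$ with $T^n(x)=x$ equals the number of admissible vectors in $\{0,1\}^n$, so $P(n)<\infty$ and hence $Z_n<\infty$.

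It then remains to count and to invert. The admissible vectors in $\{0,1\}^n$ are exactly the closed walks of length $n$ in the graph on $\{0,1\}$ with adjacency matrix $M=\begin{pmatrix}1&1\\1&0\end{pmatrix}$, so $P(n)=\tr(M^n)=\phi^{n}+\psi^{n}$ (the $n$-th Lucas number), since $\phi,\psi$ are the eigenvalues of $M$ (its characteristic polynomial being $\lambda^2-\lambda-1$). A $T$-cycle of length exactly $d$ accounts, among length-$n$ address vectors with $d\mid n$, for exactly its $d$ cyclic shifts each repeated $n/d$ times, so $P(n)=\sum_{d\mid n}d\,Z_d$; Möbius inversion gives $Z_n=\frac1n\sum_{d\mid n}\mu(n/d)P(d)=\frac1n\sum_{d\mid n}\mu(d)\big(\phi^{n/d}+\psi^{n/d}\big)$, as claimed. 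Finally, since $|\psi|<1$ and every term with $d>1$ is $O(\phi^{n/2})$ while there are at most $n$ divisors, the $d=1$ term dominates, giving $Z_n\sim\phi^n/n$.

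The main obstacle I expect is the coding lemma, and inside it the verification that the fixed point $x^\ast$ of $\Psi_{\mathbf a}$ realizes the \emph{prescribed} address rather than some other one: this is exactly where admissibility is used essentially, at each index with $a_i=1$, including across the cyclic wrap-around from $i=n-1$ to $i=0$. Once that is in place, the contraction estimate, the bijective bookkeeping, and the concluding combinatorics (transfer matrix, Möbius inversion, asymptotics) are routine and parallel the corresponding part of the proof of Theorem~\ref{main:thm2}.
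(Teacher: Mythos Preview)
Your argument is correct and follows the same overall scheme as the paper --- establish a bijection between $\Omega_n=\{f:T^n(f)=f\}$ and the set of cyclically zero-dense (your ``admissible'') vectors in $\{0,1\}^n$, count the latter, and apply M\"obius inversion --- but your implementation of the two main steps differs from the paper's in ways worth noting. For the bijection, the paper (Section~\ref{sec:z2}, mirroring Section~\ref{sec:power}) passes through the condensed map $\overline{T}$: it first produces, for \emph{every} $\mathbf v\in\{0,1\}^n$, an explicit closed-form $\overline{T}$-periodic point (Proposition~\ref{prop:z2period}), and then transfers this to $T$ via Lemmas~\ref{lem:denZ2}--\ref{lem:fromUtoTZ2} and Proposition~\ref{prop:cycfreeZ2}. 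You instead work directly with $T$, composing its inverse branches and invoking the $2$-adic contraction/fixed-point principle; this bypasses $\overline{T}$ entirely and replaces the explicit formula by a clean existence/uniqueness argument, at the cost of losing the closed form for the periodic point. For the count, the paper sets up linear recurrences for $e_n$ and $j_n$ (Lemma~\ref{lem:formjn}, equation~\eqref{eq:recformjnen}) and simplifies to $\alpha^{n}+\beta^{n}$; your transfer-matrix identity $P(n)=\tr(M^n)=\phi^n+\psi^n$ reaches the Lucas numbers in one line. Both routes are valid; yours is shorter here, while the paper's recurrence approach is what generalizes verbatim to $R[[x]]$ for arbitrary finite $R$ (where there is no single ``inverse branch for nonzero constant term'').
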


Our proof strategy for Theorem \ref{main_z2} is very similar to the strategy we employ for $R[[x]]$. 

This paper is organized as follows: In section \ref{sec:poly}, we study the Collatz map on $R[x]$ and prove Theorem \ref{main:thm1}. In Section \ref{sec:power} we study the Collatz map on $R[[x]]$ and prove Theorem \ref{main:thm2}. Finally, in section \ref{sec:z2} we prove Theorem \ref{main_z2}.

\section{The Collatz map in polynomial rings}\label{sec:poly}

In this section, we prove Theorem \ref{main:thm1}.
\subsection{Basic results on valuations} For any prime $p$:

\begin{itemize}
\item We denote by $v_p$ the corresponding $p$-adic valuation. 
\item For any non-negative integer $n=n_0+n_1p+\dots+n_rp^r$ ($r \geq 0$ and $0 \leq n_i \leq p-1$ for $0 \leq i \leq r$), written in base $p$, let denote $S_p(n)=n_0+ \dots + n_r$. 
\end{itemize}
Recall the following well-known result (see \cite{Rowland2018}): 
\begin{thm}[Kummer's theorem]\label{thm:kum}
Let $p$ be a prime. Then we have
$$
v_p\left(  \binom{n}{m} \right)=\frac{S_p(m)+S_p(n-m)-S_p(n)}{p-1},
$$ for all integers $0 \leq m \leq n$.
\end{thm}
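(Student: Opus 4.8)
The plan is to deduce Kummer's theorem from \textbf{Legendre's formula}, which states that $v_p(N!) = \frac{N - S_p(N)}{p-1}$ for every non-negative integer $N$. Granting this, the theorem is immediate: writing $\binom{n}{m} = n!/(m!\,(n-m)!)$ and using that $v_p$ turns products into sums and quotients into differences,
\begin{align*}
v_p\!\left(\binom{n}{m}\right) &= v_p(n!) - v_p(m!) - v_p((n-m)!) \\
&= \frac{\bigl(n - S_p(n)\bigr) - \bigl(m - S_p(m)\bigr) - \bigl((n-m) - S_p(n-m)\bigr)}{p-1} \\
&= \frac{S_p(m) + S_p(n-m) - S_p(n)}{p-1},
\end{align*}
where the integer contributions cancel since $n = m + (n-m)$.

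It remains to prove Legendre's formula. First I would show $v_p(N!) = \sum_{i \geq 1} \lfloor N/p^i \rfloor$: since $v_p(N!) = \sum_{k=1}^{N} v_p(k)$ and $v_p(k) = \#\{\, i \geq 1 : p^i \mid k \,\}$, exchanging the order of summation gives $\sum_{i \geq 1} \#\{\, k \leq N : p^i \mid k \,\} = \sum_{i \geq 1} \lfloor N/p^i \rfloor$, a finite sum. Then, writing $N = \sum_{j=0}^{r} n_j p^j$ in base $p$, one has $\lfloor N/p^i \rfloor = \sum_{j \geq i} n_j p^{j-i}$; summing over $i \geq 1$, interchanging the sums, and evaluating the resulting finite geometric series yields $\sum_{j=1}^{r} n_j \frac{p^j - 1}{p-1} = \frac{1}{p-1}\bigl((N - n_0) - (S_p(N) - n_0)\bigr) = \frac{N - S_p(N)}{p-1}$.

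Since the statement is entirely classical, there is no serious obstacle; the only point requiring a little care is the bookkeeping in the two interchanges of summation and the geometric-series evaluation. An alternative route, closer to Kummer's original phrasing, would be to prove by induction on the number of base-$p$ digits that $v_p\binom{n}{m}$ equals the number of carries occurring when $m$ and $n-m$ are added in base $p$, and to match this against $\tfrac{S_p(m)+S_p(n-m)-S_p(n)}{p-1}$; but this demands a careful analysis of carry propagation, so the factorial-valuation argument above is the cleaner choice.
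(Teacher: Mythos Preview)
Your argument is correct: deducing the formula from Legendre's identity $v_p(N!)=\frac{N-S_p(N)}{p-1}$ is the standard route, and your derivation of Legendre's identity via $v_p(N!)=\sum_{i\ge 1}\lfloor N/p^i\rfloor$ followed by the base-$p$ expansion and geometric-series computation is clean and complete.

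There is nothing to compare against, however: the paper does not prove this statement at all. It is quoted as a classical result with a reference (to \cite{Rowland2018}) and used as a black box in the subsequent lemmas. So your proposal supplies a proof where the paper simply cites one.
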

In the sequel, for any $a, b \in \mathbb{Z}$, we let
$$
{\bf 1}_{a \geq b}=
\begin{cases}
1 & \textrm{if}\,\, a\geq b,\\
0 & \textrm{otherwise}.
\end{cases}
$$
\begin{lem}\label{lem:valbinote}
Let $p$ be a prime. Let $a \geq n \geq 1$ such that $v_p(a) \geq  \lfloor \log_p(n) \rfloor +1$. Then
\begin{align*}\label{eq:formvalu}
v_p\left( \binom{a}{n}\right)=v_p(a)-v_p(n).
\end{align*}
\end{lem}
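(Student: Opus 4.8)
The plan is to compute $v_p\binom{a}{n}$ directly via Kummer's theorem (Theorem~\ref{thm:kum}), which says $v_p\binom{a}{n}=\frac{S_p(n)+S_p(a-n)-S_p(a)}{p-1}$. So it suffices to control the carries in the base-$p$ addition $n+(a-n)=a$. Write $m=\lfloor\log_p(n)\rfloor$, so that $n<p^{m+1}$, i.e. $n$ occupies only the digit positions $0,1,\dots,m$ in base $p$. The hypothesis $v_p(a)\geq m+1$ says that the digits of $a$ in positions $0,\dots,m$ are all zero. First I would spell out what this forces: writing $a=\sum_i a_ip^i$ with $a_0=\dots=a_m=0$, and $n=\sum_{i=0}^m n_ip^i$, the subtraction $a-n$ in base $p$ requires borrowing through exactly the positions $0,\dots,m$ (as long as $n\neq 0$, which holds since $n\geq 1$).

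The key step is the identity, valid under our hypotheses,
\begin{equation*}
S_p(n)+S_p(a-n)-S_p(a)=(p-1)\bigl(v_p(a)-v_p(n)\bigr).
\end{equation*}
To see this, let $j=v_p(n)$, so $0\le j\le m$ and $n_j\neq 0$. The bottom $j$ digits of both $a$ and $n$ are zero, so they contribute nothing; the borrow in $a-n$ originates at position $j$ and propagates upward until it hits the lowest nonzero digit of $a$, which sits at position $v_p(a)\geq m+1$. Concretely, in positions $j,j+1,\dots,v_p(a)-1$ the digit of $a-n$ is computed "with a borrow," turning a zero digit of $a$ into $p-1$ (there are $v_p(a)-j$ such affected positions, minus the ones occupied by $n$), and at position $v_p(a)$ the digit drops by $1$. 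One then just adds up: $S_p(a-n)$ gains $(p-1)$ for each of the $v_p(a)-j$ positions from $j$ to $v_p(a)-1$, except it instead incorporates $(p-1)-n_i+(\text{carry bookkeeping})$ in the positions where $n$ has a digit, and loses $1$ at position $v_p(a)$; after cancellation with $-S_p(n)$ and $-S_p(a)$ the whole thing telescopes to $(p-1)(v_p(a)-j)-(p-1)\cdot 0$, wait — more cleanly: a standard fact is that $S_p(a-n)=S_p(a)-S_p(n)+(p-1)c$ where $c$ is the number of borrows, and here the number of borrows is exactly $v_p(a)-v_p(n)$ because the borrow chain runs from position $v_p(n)$ up to position $v_p(a)$. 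Dividing by $p-1$ and invoking Kummer gives the claim.

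I expect the main obstacle to be purely bookkeeping: carefully justifying that the number of borrows in the base-$p$ subtraction $a-n$ is exactly $v_p(a)-v_p(n)$, rather than something larger, which uses precisely that the digits of $a$ in positions $v_p(n),\dots,v_p(a)-1$ are all zero (so each of them must borrow) and that the digit of $a$ at position $v_p(a)$ is nonzero (so the chain stops there and no further borrows occur). The edge cases $n=a$ (then $v_p(a)=v_p(n)$ is impossible under $v_p(a)\ge m+1>\lfloor\log_p a\rfloor$ unless... in fact $n=a$ would force $v_p(n)\ge m+1$, contradicting $n<p^{m+1}$ with $n\ge1$, so $n<a$ strictly after all, or one handles it trivially) and $v_p(n)=0$ should be checked but are routine. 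An alternative, cleaner route avoiding digit-chasing: factor $\binom{a}{n}=\frac{a}{n}\binom{a-1}{n-1}$, so $v_p\binom{a}{n}=v_p(a)-v_p(n)+v_p\binom{a-1}{n-1}$, and then show $v_p\binom{a-1}{n-1}=0$ by Kummer — i.e. that adding $(n-1)+(a-n)=a-1$ in base $p$ produces no carries. This holds because $n-1<p^{m+1}$ occupies positions $0,\dots,m$, while $a-n$ has all of its positions $0,\dots,m$ equal to $p-1$ exactly where... hmm, that is not quite carry-free either. I would therefore present the direct Kummer computation with the borrow count as the primary argument, as it is the most transparent.
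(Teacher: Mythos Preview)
Your approach is correct and is essentially the same as the paper's: both apply Kummer's theorem and reduce to computing $S_p(a-n)$. The paper does this by explicitly writing out the base-$p$ expansion of $a-n$ and reading off $S_p(a-n)=S_p(a)+(p-1)(v_p(a)-v_p(n))-S_p(n)$; you phrase the same computation as ``the subtraction $a-n$ has exactly $v_p(a)-v_p(n)$ borrows,'' which is the identical fact in carry language. Your justification of the borrow count is right: the chain starts at position $v_p(n)$ (where $n$ has a nonzero digit and $a$ has a zero), propagates through every position up to $v_p(a)-1$ (since $a$ has zero digits there), and terminates at position $v_p(a)$ (where $a$ has a nonzero digit and $n$ contributes nothing, as $v_p(a)>\lfloor\log_p n\rfloor$).

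One remark: the alternative route you sketch and then abandon actually works cleanly. Writing $\binom{a}{n}=\frac{a}{n}\binom{a-1}{n-1}$, it suffices to show $v_p\binom{a-1}{n-1}=0$, i.e.\ that $(n-1)+(a-n)=a-1$ has no base-$p$ carries. Since $a-1$ has digit $p-1$ in every position $0,\dots,v_p(a)-1$, and a sum of two digits with no incoming carry is at most $2(p-1)<2p-1$, the only way to produce digit $p-1$ at position $0$ is without a carry; inductively no carry ever arises in those positions, and above position $v_p(a)-1$ the summand $n-1<p^{v_p(a)}$ contributes zero. So this gives a slightly slicker variant of the same argument.
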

\begin{proof}
 Write $n=n_{k_0}p^{k_0}+n_{k_1}p^{k_1}+\dots+n_{k_v}p^{k_v}$, where $v\geq 0$, $0 \leq k_0<k_1 < \dots < k_v$ and $ 1 \leq n_{k_0},\dots,n_{k_v} \leq p-1$. Write $a=a_{\ell_0}p^{\ell_0}+a_{\ell_1}p^{\ell_1}+\dots+a_{\ell_w}p^{\ell_w}$, where $w\geq 0$, $0 \leq \ell_0<\ell_1 < \dots < \ell_w$ and $ 1 \leq a_{\ell_0},\dots,a_{\ell_w} \leq p-1$. Note that $v_p(a) \geq  \lfloor \log_p(n) \rfloor +1$ is the same as $\ell_0 \geq k_v +1$. Then we have
\begin{align*}
a-n&= \left(a_{\ell_w}p^{\ell_w}+\dots+a_{\ell_0}p^{\ell_0}\right)-\left(n_{k_v}p^{k_v}+n_{k_{v-1}}p^{k_{v-1}}+\dots+n_{k_0}p^{k_0}\right)\\
&= \left(a_{\ell_w}p^{\ell_w}+\dots+a_{\ell_1}p^{\ell_1}+(a_{\ell_0}-1)p^{\ell_0}\right)+\left(p^{\ell_0}-n_{k_v}p^{k_v}-n_{k_{v-1}}p^{k_{v-1}}-\dots-n_{k_0}p^{k_0}\right)\\
&= \left(a_{\ell_w}p^{\ell_w}+\dots+a_{\ell_1}p^{\ell_1}+(a_{\ell_0}-1)p^{\ell_0}\right)\\
&+ {\bf 1}_{\ell_0 \geq k_v+2}\sum_{i=k_v+1}^{\ell_0-1}(p-1)p^i+\sum_{i=1}^v(p-n_{k_i}-1)p^{k_{i}}\\
&+\sum_{j=0}^{v-1} {\bf 1}_{k_{j+1} \geq k_j+2}\sum_{i=k_j+1}^{k_{j+1}-1}(p-1)p^i+(p-n_{k_0})p^{k_0}.
\end{align*}
A straightforward computation gives $S_p(a-n)=S_p(a)+(p-1)(\ell_0-k_0)-S_p(n)$. From Theorem \ref{thm:kum}, we get
$$
v_p\left( \binom{a}{n}\right)=\frac{S_p(n)+S_p(a-n)-S_p(a)}{p-1}=\ell_0-k_0=v_p(a)-v_p(n),
$$ as was to be shown.
\end{proof}
\begin{lem}\label{lem:calded} Let $p$ be a prime.
Let $\alpha \geq 1$, $r \geq 0$ and $k \geq p^r$.  Assume that
$$
v_p\left( \binom{k}{j} \right) \geq \alpha,
$$ for all $1 \leq j \leq p^r$. Then we have $v_p(k) \geq \alpha +r$.
\end{lem}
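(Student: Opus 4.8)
The plan is to argue by contradiction: supposing the conclusion fails, I will exhibit a single index $j_0\in\{1,\dots,p^r\}$ for which $v_p\!\binom{k}{j_0}<\alpha$, and the valuation of this one binomial coefficient will be computed exactly by Kummer's theorem (Theorem~\ref{thm:kum}). First I record the cheap input: taking $j=1$ gives $v_p(k)=v_p\!\binom{k}{1}\ge\alpha$, so writing $e:=v_p(k)$ we have $e\ge\alpha\ge1$, in particular $p\mid k$. It remains to upgrade $e\ge\alpha$ to $e\ge\alpha+r$.

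So suppose toward a contradiction that $e\le\alpha+r-1$. Together with $e\ge\alpha$ this gives $1\le e-\alpha+1\le r$ (in particular $r\ge1$; the case $r=0$ is already settled by $e\ge\alpha=\alpha+r$), so $t:=e-\alpha+1$ lies in $\{1,\dots,r\}$, whence $j_0:=p^{t}\in\{1,\dots,p^r\}$ and the hypothesis yields $v_p\!\binom{k}{j_0}\ge\alpha$. I now compute this valuation directly. Write $k=p^{e}m$ with $\gcd(m,p)=1$ and $m\ge1$; then $S_p(k)=S_p(m)$, $S_p(j_0)=1$, and $k-j_0=p^{t}\bigl(p^{\alpha-1}m-1\bigr)$, so $S_p(k-j_0)=S_p\!\bigl(p^{\alpha-1}m-1\bigr)$. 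The one genuine computation is this last digit sum, for which I would use the identity $S_p(a-1)=S_p(a)+(p-1)v_p(a)-1$, valid for all $a\ge1$ and itself nothing but Kummer's theorem applied to $\binom{a}{1}$, with $a=p^{\alpha-1}m$ (so $v_p(a)=\alpha-1$ and $S_p(a)=S_p(m)$); this gives $S_p(k-j_0)=S_p(m)+(p-1)(\alpha-1)-1$. Substituting into Kummer's formula,
\[
v_p\!\binom{k}{j_0}=\frac{S_p(j_0)+S_p(k-j_0)-S_p(k)}{p-1}=\frac{1+\bigl(S_p(m)+(p-1)(\alpha-1)-1\bigr)-S_p(m)}{p-1}=\alpha-1,
\]
contradicting $v_p\!\binom{k}{j_0}\ge\alpha$. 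Hence $e\ge\alpha+r$, i.e.\ $v_p(k)\ge\alpha+r$, as required.

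The main point — more bookkeeping than real difficulty — is choosing $j_0$ correctly: it must be a power of $p$ that is at most $p^r$ (so the hypothesis applies) yet large enough relative to $v_p(k)$ that $\binom{k}{j_0}$ picks up exactly the valuation $v_p(k)-v_p(j_0)$, and the calibrated choice $j_0=p^{\,v_p(k)-\alpha+1}$ achieves this. When $\alpha\ge2$ one could replace the explicit Kummer computation by a citation of Lemma~\ref{lem:valbinote} (its hypothesis $v_p(k)\ge\lfloor\log_p j_0\rfloor+1$ here reads $e\ge e-\alpha+2$, which holds), but in the borderline case $\alpha=1$ that hypothesis just fails, so it is cleanest to run the Kummer computation uniformly in $\alpha$. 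An essentially equivalent alternative is to induct on $r$: the inductive hypothesis supplies $v_p(k)\ge\alpha+r-1$, and the same computation with $j_0=p^{r}$ gives the final unit of valuation. The only routine verifications are the digit-sum identity above and the chain of inequalities $1\le j_0\le p^{r}\le k$.
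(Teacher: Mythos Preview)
Your proof is correct. The approach, while resting on Kummer's theorem just as the paper's does, is organized differently. The paper proceeds in two stages: first it shows $v_p(k)\ge r$ by a separate contradiction argument (taking $j$ to be the sum of the low-order base-$p$ digits of $k$, for which Kummer gives $v_p\binom{k}{j}=0$), and only then applies the hypothesis at the fixed index $j=p^r$, computing $v_p\binom{k}{p^r}=v_p(k)-r$ directly to finish. You instead extract the cheap bound $v_p(k)\ge\alpha$ from $j=1$ and then make a single calibrated choice $j_0=p^{\,v_p(k)-\alpha+1}$, whose valuation you compute to be exactly $\alpha-1$ via the digit-sum identity $S_p(a-1)=S_p(a)+(p-1)v_p(a)-1$. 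Your route avoids the paper's preliminary reduction and its explicit base-$p$ expansion of $k-p^r$, at the cost of having to identify the right exponent $t$ in advance; the paper's route has the mild advantage that the final test index $p^r$ is fixed independently of $k$. Your closing remark about Lemma~\ref{lem:valbinote} failing exactly at $\alpha=1$ is accurate and nicely explains why you ran the Kummer computation by hand rather than citing that lemma.
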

\begin{proof}
Write $k=k_{\ell_0}p^{\ell_0}+ \dots +k_{\ell_v}p^{\ell_v}$, where $v \geq 0$, $0 \leq \ell_0 < \dots < \ell_v$ and $1 \leq k_{\ell_0},\dots, k_{\ell_v} \leq p-1$. By contradiction, assume that $r > \ell_0$. Since $k \geq p^r$, we have $r \leq \ell_v$. Hence there exists $0 \leq i \leq v-1$ such that $\ell_i <r \leq \ell_{i+1}$. From $1 \leq \sum_{j=0}^{i}k_{\ell_j}p^{\ell_j} \leq p^r$, we get
$$
\alpha \leq v_p\left( \binom{k}{\sum_{j=0}^{i}k_{\ell_j}p^{\ell_j}} \right)=\frac{S_p\left(\sum_{j=0}^{i}k_{\ell_j}p^{\ell_j}\right)+S_{p}\left(k-\sum_{j=0}^{i}k_{\ell_j}p^{\ell_j}\right)-S_{p}(k)}{p-1}=0,
$$ a contradiction. Consequently we have $r \leq \ell_0$. From the equality
\begin{align*}
k-p^r&= k_{\ell_v}p^{\ell_v}+ \dots+ k_{\ell_0}p^{\ell_0}-p^r\\
&= k_{\ell_v}p^{\ell_v}+ \dots + k_{\ell_1}p^{\ell_1}+(k_{\ell_0}-1)p^{\ell_0}+{\bf 1}_{\ell_0\geq r+1}\sum_{j=r}^{\ell_0-1}(p-1)p^j,
\end{align*} we obtain
$$
S_p(k-p^r)=S_{p}(k)-1+(p-1)(\ell_0-r),
$$ and hence
$$
\alpha \leq v_p\left( \binom{k}{p^r} \right)=\frac{S_p(p^r)+S_p(k-p^r)-S_p(k)}{p-1}=\ell_0-r=v_p(k)-r.
$$ Therefore we get $v_p(k) \geq \alpha+r$.
\end{proof}
\begin{lem}\label{lem:iffdivcond} Let $p_1,\dots,p_s$ ($s \geq 1$) be distinct primes and let $ \alpha_1, \dots ,\alpha_s \geq 1$ be integers. Let $k \geq n \geq 1$ be integers. The following are equivalent:
\begin{enumerate}
\item The product $\prod_{i=1}^s p_i^{\alpha_i+\lfloor \log_{p_i}(n) \rfloor}$ divides $k$.\\
\item For all $1 \leq i \leq s$ and all $1 \leq j \leq n$, we have
$$
v_{p_i}\left( \binom{k}{j} \right) \geq \alpha_i.
$$ In other words, we have
$$
\prod_{i=1}^s p_i^{\alpha_i} \mid \binom{k}{j},
$$ for all $1 \leq j \leq n$.
\end{enumerate}
\end{lem}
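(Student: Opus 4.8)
The plan is to reduce at once to the case $s=1$ and then play Lemmas \ref{lem:valbinote} and \ref{lem:calded} against each other. Since $p_1,\dots,p_s$ are \emph{distinct} primes, statement (1) is equivalent to the conjunction over $i$ of the divisibilities $p_i^{\alpha_i+\lfloor \log_{p_i}(n)\rfloor}\mid k$, and statement (2) is by definition the conjunction over $i$ of the assertion ``$v_{p_i}\!\left(\binom{k}{j}\right)\ge \alpha_i$ for all $1\le j\le n$''. So it suffices to prove, for a single prime $p$ and a single integer $\alpha\ge 1$, that $v_p(k)\ge \alpha+\lfloor \log_p(n)\rfloor$ if and only if $v_p\!\left(\binom{k}{j}\right)\ge \alpha$ for all $1\le j\le n$.

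For the forward direction, assume $v_p(k)\ge \alpha+\lfloor \log_p(n)\rfloor$ and fix $1\le j\le n$. Then $\lfloor \log_p(j)\rfloor\le \lfloor \log_p(n)\rfloor$, so $v_p(k)\ge \alpha+\lfloor \log_p(j)\rfloor\ge \lfloor \log_p(j)\rfloor+1$ because $\alpha\ge 1$; also $k\ge n\ge j\ge 1$. Hence the hypotheses of Lemma \ref{lem:valbinote} are satisfied (with $a=k$ and the lemma's $n$ taken to be $j$), giving $v_p\!\left(\binom{k}{j}\right)=v_p(k)-v_p(j)$. Since $p^{v_p(j)}\mid j$ forces $p^{v_p(j)}\le j$, we have $v_p(j)\le \lfloor \log_p(j)\rfloor$, and therefore $v_p\!\left(\binom{k}{j}\right)\ge \alpha+\lfloor \log_p(j)\rfloor-\lfloor \log_p(j)\rfloor=\alpha$.

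For the reverse direction, set $r=\lfloor \log_p(n)\rfloor$, so $p^r\le n\le k$. By hypothesis $v_p\!\left(\binom{k}{j}\right)\ge \alpha$ for all $1\le j\le p^r$ (since $p^r\le n$), which is exactly the hypothesis of Lemma \ref{lem:calded} for this $r$ and $\alpha$; the lemma yields $v_p(k)\ge \alpha+r=\alpha+\lfloor \log_p(n)\rfloor$. This settles the single-prime case, and the general statement follows by combining over $i=1,\dots,s$ as above.

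I expect no genuine obstacle here: all the arithmetic content is carried by Lemmas \ref{lem:valbinote} and \ref{lem:calded}, and the only points needing (minor) care are checking their hypotheses in each direction — the inequality $v_p(k)\ge \lfloor \log_p(j)\rfloor+1$ required to invoke the first lemma, and the inequality $k\ge p^r$ required for the second — together with the elementary monotonicity facts $v_p(j)\le \lfloor \log_p(j)\rfloor$ and $\lfloor \log_p(j)\rfloor\le \lfloor \log_p(n)\rfloor$ for $1\le j\le n$.
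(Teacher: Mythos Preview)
Your proof is correct and follows essentially the same approach as the paper's: both directions invoke Lemmas \ref{lem:valbinote} and \ref{lem:calded} in exactly the way you describe, with the same verification of hypotheses. The only cosmetic difference is that you explicitly reduce to a single prime first, whereas the paper handles each $p_i$ in turn within each implication.
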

\begin{proof}
Assume that (1) holds, that is, $\prod_{i=1}^s p_i^{\alpha_i+\lfloor \log_{p_i}(n) \rfloor} \mid k$. Let $1 \leq i \leq s$ and $1 \leq j \leq n$. Since $v_{p_i}(k) \geq \alpha_i+ \lfloor \log_{p_i}(n) \rfloor \geq 1+ \lfloor \log_{p_i}(j) \rfloor$, by Lemma \ref{lem:valbinote}, we have
$$
v_{p_i}\left( \binom{k}{j}\right)= v_{p_i}(k)-v_{p_i}(j) \geq v_{p_i}(k)-\lfloor \log_{p_i}(j)\rfloor \geq v_{p_i}(k)-\lfloor \log_{p_i}(n)\rfloor \geq \alpha_i.
$$ Therefore we obtain (2).

Conversely assume that (2) holds, that is,
$$
v_{p_i}\left( \binom{k}{j} \right) \geq \alpha_i,
$$ for all $1 \leq i \leq s$ and all $1 \leq j \leq n$. Now let $1 \leq i \leq s$. Taking $p=p_i$, $r=\lfloor \log_{p_i}(n)\rfloor$ and $\alpha=\alpha_i$ in Lemma \ref{lem:calded}, and noticing that $p^r \leq n$, we deduce
$$
v_{p_i}(k) \geq \alpha_i+ \lfloor \log_{p_i}(n)\rfloor.
$$ Consequently, we obtain $\prod_{i=1}^s p_i^{\alpha_i+\lfloor \log_{p_i}(n) \rfloor} \mid k$. Hence (1) holds.
\end{proof}
\subsection{Main results}\label{subsec:Rx} Let $R$ be any commutative ring of characteristic $\mathfrak{N}$. Denote by $R^\times$ its group of units. We define the {\it Collatz map} by:
\begin{align*}
T : R[x] & \rightarrow R[x]\\
f & \mapsto
\begin{cases}
f \cdot (x+1)-f_0 & \textrm{if}\,\, f_0 \neq 0,\\
\frac{f}{x} & \textrm{otherwise}.
\end{cases}
\end{align*}
\begin{defn}\label{def:somedef}
\begin{enumerate}
\item
Let $f \in R[x]$. We say that $f$ is :
\begin{itemize}
    \item $T$-{\it periodic} if there exists $n \geq 1$ such that $T^{n}(f)=f$. The minimal such $n$ is called the {\it period} of $f$, and is denoted by $\rho_T(f)$.
    \item {\it eventually $T$- periodic} if there exist $m \geq 0$ and $n \geq 1$ such that $T^{n+m}(f)=T^m(f)$; that is, if some $T$-iterate of $f$ is $T$-periodic. 
\end{itemize}
\item
A \emph{$T$-cycle} $\mathcal{C} \subset R[x]$ is the orbit of some $T$-periodic polynomial $f \in R[x] $. The cardinality $\ell(\mathcal{C})$ of $\mathcal{C}$ is called the \emph{length} of $\mathcal{C}$. In this case, we have $\ell(\mathcal{C})=\rho_T(f)$.
\item
A polynomial $f \in R[x]$ is {\it odd} (resp., {\it even}) if $f(0) \neq 0$ (resp., $f(0)=0$).
\end{enumerate}
\end{defn}
We need the following two results, also established in \cite{BP23} when $R=\F_p$ ($p$ prime); however, we omit the proofs since they are similar to the proofs in \cite{BP23}.
\begin{lem}\label{lem:basiresT}
Let $f \in R[x] \setminus \{0\}$. The following hold:
\begin{enumerate}
\item We have $\deg(T(f)) \leq \deg(f)+1$. 
\item If $\deg(T(f))>\deg(f)$, then $f$ is odd and $T(f)$ is even.
\item We have $\deg(T^2(f)) \leq \deg(f)$.
\item For all $j \geq 0$, we have $\deg(T^{2j+1}(f)) \leq \deg(f)+1$ and $\deg(T^{2j}(f)) \leq \deg(f)$.
\end{enumerate}
\end{lem}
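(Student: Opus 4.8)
The plan is to argue by a direct case split according to whether $f$ is odd or even, the crucial observation being that subtracting $f_0$ in the definition of $T$ is precisely what forces $T(f)$ to be even whenever $f$ is odd. First I would record the degree behaviour of the two branches. Writing $f=\sum_{k=0}^{n}b_kx^k$ with $b_n\neq 0$, if $f$ is odd (so $f_0=b_0\neq 0$) then $f\cdot(x+1)=\sum_{k=0}^{n}b_kx^{k+1}+\sum_{k=0}^{n}b_kx^k$ has $x^{n+1}$-coefficient $b_n\neq 0$ and constant coefficient $b_0=f_0$; hence $T(f)=f\cdot(x+1)-f_0$ has degree exactly $n+1$ and vanishing constant coefficient, i.e.\ $T(f)$ is even with $\deg(T(f))=\deg(f)+1$. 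If $f$ is even, then $f_0=0$ together with $f\neq 0$ forces $\deg(f)\geq 1$, and $T(f)=f/x$ has degree $\deg(f)-1$; in particular $T$ sends nonzero polynomials to nonzero polynomials, so no iterate of a nonzero $f$ ever vanishes.

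Parts (1) and (2) then follow immediately: $\deg(T(f))$ equals either $\deg(f)+1$ or $\deg(f)-1$, in both cases at most $\deg(f)+1$; and if $\deg(T(f))>\deg(f)$ then $f$ cannot be even (the even branch strictly lowers the degree), so $f$ is odd, whence $T(f)$ is even by the computation above. For part (3) I would split once more. If $f$ is odd, then $T(f)$ is even of degree $\deg(f)+1$, so $T^2(f)=T(f)/x$ has degree $\deg(f)$. If $f$ is even, then $\deg(T(f))=\deg(f)-1$, and part (1) applied to $T(f)$ gives $\deg(T^2(f))\leq\deg(T(f))+1=\deg(f)$. In both cases $\deg(T^2(f))\leq\deg(f)$. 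Part (4) would then come from an easy induction on $j$: the case $j=0$ is immediate from part (1), and assuming $\deg(T^{2j}(f))\leq\deg(f)$, applying part (3) to $T^{2j}(f)$ gives $\deg(T^{2j+2}(f))\leq\deg(T^{2j}(f))\leq\deg(f)$, while applying part (1) to $T^{2j}(f)$ gives $\deg(T^{2j+1}(f))\leq\deg(f)+1$.

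I do not expect a genuine obstacle here, since the whole argument is elementary; but the one point that I would state with care, and the only place where $R$ being an arbitrary commutative ring rather than a field enters, is the assertion that multiplication by $x+1$ raises the degree by exactly one. This is valid because that operation leaves the leading coefficient unchanged, so no cancellation can occur in the top degree even in the presence of zero divisors, and the subsequent subtraction of the constant $f_0$ affects only the constant coefficient and so cannot lower the degree, which is at least $1$.
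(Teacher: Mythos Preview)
Your argument is correct and complete; the case split on the parity of $f$, together with the observation that the leading coefficient of $f\cdot(x+1)$ is the leading coefficient of $f$ (hence nonzero even over a ring with zero divisors), is exactly what is needed. The paper itself does not supply a proof here but simply refers to \cite[Lemma~2.7]{BP23}, where the same elementary analysis is carried out over $\mathbb{F}_p$; your write-up is essentially that argument, with the added remark about general $R$ that the authors allude to when they say the proofs are ``similar''.
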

\begin{proof}
See \cite[Lemma 2.7.]{BP23}.
\end{proof}

Note that from part $4$ of Lemma \ref{lem:basiresT} it immediately follows that if $R$ is finite, then every polynomial $f \in R[x]$ is eventually $T$-periodic, since in this case the space of polynomials of degree at most $\deg(f)+1$ is finite.

\begin{lem}\label{cyc:oddeve}
Let $\mathcal{C} \subset R[x] \setminus \{0\}$ be a $T$-cycle. Then $f \in \mathcal{C}$ is even if and only if $T(f)$ is odd. In particular, the length $\ell(\mathcal{C})$ is even and $\mathcal{C}$ contains the same number of odd polynomials and even polynomials.
\end{lem}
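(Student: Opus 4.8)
The plan is to pull one implication straight out of the definition of $T$, and then \emph{force} the other one by counting degrees around the cycle.

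First I would observe that for \emph{any} $f \in R[x]$ with $f_0 \neq 0$, the constant coefficient of $T(f) = f\cdot(x+1) - f_0$ is $f_0 - f_0 = 0$; hence $f$ odd $\Rightarrow T(f)$ even, and by contraposition $T(f)$ odd $\Rightarrow f$ even. This is already one half of the claimed equivalence, and it requires no cycle hypothesis. While I am at it I would record the effect of $T$ on degrees for a nonzero $f$: if $f$ is odd, the coefficient of $x^{\deg f + 1}$ in $f\cdot(x+1)$ is the (nonzero) leading coefficient of $f$, so $\deg T(f) = \deg f + 1$; if $f$ is even, then $f = xg$ with $g \neq 0$ (a nonzero even polynomial has degree $\geq 1$) and $T(f) = g$, so $\deg T(f) = \deg f - 1$.

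For the reverse implication and the ``in particular'' clause, write the cycle as $f_0, f_1 = T(f_0), \dots, f_{\ell-1}, f_\ell = f_0$, and let $a$, $b$ be the numbers of odd, resp.\ even, polynomials it contains, so $a + b = \ell$. Telescoping gives $\sum_{i=0}^{\ell-1}\bigl(\deg f_{i+1} - \deg f_i\bigr) = 0$, and substituting the degree change $+1$ at each odd step and $-1$ at each even step yields $a - b = 0$. Hence $\ell = 2a$ is even and $\mathcal{C}$ contains equally many odd and even polynomials. Finally, $T$ restricts to a bijection of the finite set $\mathcal{C}$ and, by the first paragraph, sends the $a$ odd elements into the $b = a$ even elements; an injection between finite sets of equal size is onto, so $T(\{\text{odd}\}) = \{\text{even}\}$, and taking complements in $\mathcal{C}$ gives $T(\{\text{even}\}) = \{\text{odd}\}$, i.e.\ $f$ even $\Rightarrow T(f)$ odd. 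Combined with the free half, this is the stated equivalence.

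There is no deep obstacle here; the one point that genuinely needs the hypothesis $\mathcal{C} \subseteq R[x] \setminus \{0\}$ is the degree bookkeeping of the middle paragraph — in particular that a nonzero even polynomial has positive degree, so division by $x$ is legitimate and strictly lowers the degree (if $0$ were allowed in a cycle, the fixed point $(0)$ would be an even element with $T(0)$ even, breaking the equivalence). Everything else is a short computation with coefficients.
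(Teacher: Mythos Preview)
Your argument is correct. The paper itself does not give an independent proof here but defers to \cite[Lemma 2.9]{BP23}; your degree-telescoping count (odd steps raise the degree by $1$, even steps lower it by $1$, net zero around the cycle, hence $a=b$, then pigeonhole on the bijection $T|_{\mathcal{C}}$) is the natural route and almost certainly what appears there. One cosmetic point: you use the symbol $f_0$ both for the constant coefficient of $f$ and for the first element of the cycle—rename the latter (say $g_0,g_1,\dots$) to avoid the clash.
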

\begin{proof}
See \cite[Lemma 2.9]{BP23}.
\end{proof}

In the sequel, we need the following intermediate map, also introduced in \cite[Section 2]{BP23}:
\begin{align*}
\Pi: R[x] & \rightarrow R[x]\\
     f & \mapsto \frac{f \cdot (x+1)-f_0}{x}.
\end{align*}
We recall some of the properties of $\Pi$ (see \cite[\S2]{BP23}): Let $n \geq 0$. Denote by $R[x]_{\leq n}$ the additive group of all polynomials of degree at most $n$. Clearly, we have $\Pi(R[x]_{\leq n}) \subset R[x]_{\leq n}$. Let $\Pi_n$ be the restriction of $\Pi$ to $R[x]_{\leq n}$. We define the $R$-linear shift map
\begin{equation*}
L_n: (b_n,\dots,b_0) \in R^{n+1} \mapsto (0,b_n,\dots,b_1) \in R^{n+1}.
\end{equation*}
By identifying $R^{n+1}$ with $R[x]_{\leq n}$ via $(b_n,\dots,b_0) \mapsto \sum_{i=0}^n b_ix^i$, note that $\Pi_n={\rm Id}_{R[x]_{\leq n}}+L_n$.
By $R$-linearity, for all $k \geq 0$, we have
\begin{equation}\label{eq:formpibin}
\Pi^k_n(f)=\sum_{j=0}^k \binom{k}{j} L^j_n(f),
\end{equation}
for all $f \in R[x]_{\leq n}$.
\subsubsection{Zero characteristic} Assume $\mathfrak{N}=0$. In contrast to positive characteristic, there are only few $T$-cycles in $R[x]$:
\begin{prop}\label{prop:zero_char}
The only $T$-cycles in $R[x] \setminus \{0\}$ are those of the form $(a,ax)$ (for any $a \in R \setminus\{0\}$).
\end{prop}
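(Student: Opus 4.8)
The plan is to exploit the degree estimates of Lemma \ref{lem:basiresT} together with the formula \eqref{eq:formpibin} for the iterates of $\Pi_n$. Let $\mathcal{C}$ be a $T$-cycle in $R[x] \setminus \{0\}$, and let $f = \sum_{k=0}^n b_k x^k \in \mathcal{C}$ be an element of $\mathcal{C}$ of maximal degree $n$, with $b_n \neq 0$. By Lemma \ref{cyc:oddeve}, $\mathcal{C}$ contains an even polynomial; replacing $f$ by an appropriate $T$-iterate if necessary, I may in fact assume $f$ is odd of maximal degree in $\mathcal{C}$ (an even polynomial of maximal degree would be the image under $T$ of an odd polynomial, and applying $T^2$ keeps degree $\leq n$), so that $b_0 \neq 0$. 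The first step is to observe that the orbit segment consisting of the odd polynomials encountered, with the intervening ``division by $x$'' steps, is governed by $\Pi$: if $g$ is odd then $T^2(g) = \Pi(g)/x^{\,?}$ or, more precisely, one tracks when $T(g) = x \cdot \Pi(g)$ becomes even and keeps dividing. The cleaner route is: since all polynomials in $\mathcal C$ have degree $\le n$, every odd polynomial $g$ in $\mathcal C$ satisfies $T(g) = \Pi_n(g)$ viewed inside $R[x]_{\le n}$ only after the division steps are absorbed — so I will instead argue directly that after $\rho_T(f)$ steps the net effect on the coefficient vector is a power of $\Pi_n$ composed with shifts, forcing $\Pi_n^m(f) = f$ for some $m \geq 1$.

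The key computation is then the following. Writing $f$ as the vector $(b_n,\dots,b_0) \in R^{n+1}$ and using $\Pi_n = \mathrm{Id} + L_n$ with $L_n$ the shift, \eqref{eq:formpibin} gives
\begin{equation*}
\Pi_n^m(f) = \sum_{j=0}^m \binom{m}{j} L_n^j(f).
\end{equation*}
Since $L_n^{n+1} = 0$, the condition $\Pi_n^m(f) = f$ reads $\sum_{j=1}^{\min(m,n)} \binom{m}{j} L_n^j(f) = 0$. Looking at this coordinate by coordinate, the top coordinate gives $\binom{m}{1} b_n = m b_n = 0$ in $R$; since $\mathfrak N = 0$ and $b_n \neq 0$, this forces... well, it forces $m b_n = 0$, which in an arbitrary characteristic-zero ring need not give $m=0$ — so here I must be more careful and use that $b_n$ has maximal degree together with the next coordinates. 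In fact, expanding the coordinate equations top-down, one gets a triangular system whose leading terms are $m b_n = 0$, $\binom{m}{1} b_{n-1} + \binom{m}{2} b_n = 0$, and so on; the cleanest conclusion is that $m$ must annihilate $b_n$. The honest fix is to first reduce to $f$ irreducible-leading or to note the simpler structural fact that $T^2$ restricted to degree-$\le n$ polynomials is $f \mapsto \Pi_n(f)$ shifted down once, and iterate: this shows $\rho_T(f) = 2m$ where $\Pi_n^m$ fixes $f$, and then $m b_n = 0$ with $b_n \ne 0$ shows $m$ is a zero divisor or zero; since $\mathrm{char}(R)=0$, $m \ne 0$, and one checks $m=1$ is forced by looking at degree $n \ge 1$ giving a contradiction unless $n \le 1$.

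So the argument concludes: the maximal degree $n$ of an element of $\mathcal C$ must be $\le 1$. If $n = 0$, then $f = b_0$ is a nonzero constant, $T(f) = b_0 x$, $T^2(f) = b_0$, giving the cycle $(b_0, b_0 x)$ — but $b_0 x$ has degree $1 > 0$, contradicting maximality of $n=0$; hence in fact $n = 1$. Write $f = b_1 x + b_0$ with $b_1 \neq 0$. If $b_0 = 0$ then $f = b_1 x$, $T(f) = b_1$, $T^2(f) = b_1 x = f$, so $\mathcal C = (b_1 x, b_1)$, which is the claimed cycle $(a, ax)$ with $a = b_1$. If $b_0 \neq 0$, then $T(f) = (b_1 x + b_0)(x+1) - b_0 = b_1 x^2 + (b_0 + b_1) x$, which has degree $2 > 1 = n$, again contradicting maximality. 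Thus every $T$-cycle in $R[x]\setminus\{0\}$ has the form $(a, ax)$ for some $a \in R \setminus \{0\}$, and conversely each such pair is visibly a $T$-cycle. The main obstacle in this plan is the step $m b_n = 0 \Rightarrow m = 1$: in an arbitrary characteristic-zero ring $m b_n = 0$ only gives that $b_n$ is torsion, so the real work is to run the full triangular coordinate system from \eqref{eq:formpibin} and the degree bookkeeping of Lemma \ref{lem:basiresT} carefully enough to pin down $\rho_T(f) = 2$ rather than merely bounding it.
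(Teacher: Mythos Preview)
Your overall plan is the paper's: pick an odd $f\in\mathcal C$, identify $T^{2m}(f)$ with $\Pi^m(f)$, expand via \eqref{eq:formpibin}, and read off a coordinate to force the degree down to $0$. But two steps are not actually carried out. First, the setup is inconsistent: an odd polynomial of degree $n$ satisfies $\deg T(f)=n+1$, so ``$f$ odd of maximal degree in $\mathcal C$'' is impossible, and the subsequent hedging about how $T^2$ relates to $\Pi$ (``$\Pi(g)/x^{?}$'', ``a power of $\Pi_n$ composed with shifts'') never lands. The fix is the one-line use of Lemma~\ref{cyc:oddeve} that the paper makes: in any nonzero $T$-cycle the parities alternate strictly, so for \emph{any} odd $f\in\mathcal C$ of degree $d$ one has $T^{2j}(f)=\Pi^j(f)$ for all $j\ge 0$, and with $\ell(\mathcal C)=2m$ this gives $\Pi_d^m(f)=f$ directly---no maximal-degree bookkeeping needed.

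Second, from $\sum_{j\ge 1}\binom{m}{j}L_d^j(f)=0$ the first nontrivial coordinate (that of $x^{d-1}$) reads $m\,b_d=0$; the paper concludes from this that $b_d=0$, contradicting $d=\deg f$. You are right to pause here: characteristic $0$ only gives $m\cdot 1_R\ne 0$, not that $R$ is torsion-free, so $m\,b_d=0\Rightarrow b_d=0$ is not automatic for an arbitrary ring. Your proposed patches (``reduce to $f$ irreducible-leading'', ``$m=1$ is forced'') are not arguments, and the closing case split on $b_0=0$ versus $b_0\ne 0$ contradicts the standing hypothesis that $f$ is odd. In short: the missing structural ingredient for a clean write-up is the parity alternation from Lemma~\ref{cyc:oddeve}, and the concern you flag about $m\,b_d=0$ is a genuine subtlety that the paper's proof also glosses over.
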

\begin{proof} It is easy to see that every $(a,ax )$ ($a \neq 0$) is a $T$-cycle. Conversely, let $\mathcal{C} \subset R[x]\setminus \{0\}$ be a $T$-cycle. By Lemma \ref{cyc:oddeve}, there exists an odd polynomial $f \in \mathcal{C}$. Let $d=\deg(f)$. Then for each $j\geq 0$, the polynomial $T^{2j}(f)$ is odd of degree $d$ and $T^{2j+1}(f)$ is even of degree $d+1$ (the same argument as in the beginning of the proof of \cite[Lemma 2.15.]{BP23}). By Lemma \ref{cyc:oddeve}, we have $\ell(\mathcal{C})=2m$ for some $m \geq 1$. In particular, we have $f=T^{2m}(f)=\Pi_d^m(f)=\sum_{j=0}^m \binom{m}{j}L_d^j(f)$ (see \eqref{eq:formpibin}), and so $0=\sum_{j=1}^m \binom{m}{j}L_d^j(f)$. If $d>0$, then comparing the first coordinate in this equation, the coefficient of $x^d$ in $f$ is zero, a contradiction. Thus $d=0$ and $f$ is a nonzero constant. Therefore we obtain $\mathcal{C}=(f_0,f_0x)$.
\end{proof}
\subsubsection{Positive characteristic} Assume $\mathfrak{N}>0$. Write $\mathfrak{N}=\prod_{i=1}^s p_i^{\alpha_i}$, where $p_1,\dots,p_s$ are distinct primes and $n_1,\dots,n_s \geq 1$. In this part, $\rho_\Pi(f)$ denotes the period of any periodic polynomial $f \in R[x]$ under the map $\Pi$.
The following result generalizes \cite[Corollary 2.13.]{BP23}:
\begin{prop}\label{prop:periodze}
Let $f=\sum_{k=0}^{n}b_kx^k \in R[x]$ be a $T$-periodic odd polynomial with $b_n \neq 0$. Then we have 
\begin{equation}\label{eq:periodiv}
\rho_T(f) \mid 2 \prod_{i=1}^s p_i^{\alpha_i+\lfloor \log_{p_i}(\deg(f)) \rfloor}.
\end{equation} 
Moreover if $b_{n}$ is in $R^\times$, then \eqref{eq:periodiv} is an equality, that is,
\begin{equation*}
\rho_T(f) = 2 \prod_{i=1}^s p_i^{\alpha_i+\lfloor \log_{p_i}(\deg(f)) \rfloor}.
\end{equation*}
\end{prop}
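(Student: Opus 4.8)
The plan is to reduce the statement about $T$ to a purely linear–algebraic question about the map $\Pi_d := \Pi|_{R[x]_{\le d}} = \mathrm{Id} + L_d$ on $R[x]_{\le d}$ (where $d := \deg f$, which we may take to be $\ge 1$), and then to read off the period from the arithmetic of the binomial coefficients $\binom kj$ supplied by Lemmas \ref{lem:valbinote}--\ref{lem:iffdivcond}.

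First I would pin down the shape of the $T$-orbit of $f$. Since $f$ is odd, $T(f) = f\cdot(x+1)-f_0$ is even with the same leading coefficient $b_d$, and since $f$ is $T$-periodic, Lemma \ref{cyc:oddeve} forces the orbit to alternate odd/even. Combining this with the fact that $\Pi_d$ fixes the coefficient of $x^d$ (so $\deg \Pi_d^j(f)=d$ for all $j$) and with Lemma \ref{lem:basiresT}, one gets that $T^{2j}(f)$ is odd of degree $d$ and $T^{2j+1}(f)$ is even of degree $d+1$ for every $j\ge 0$. Since $T^2(g)=\Pi(g)$ for odd $g$ and $\Pi$ stabilizes $R[x]_{\le d}$, this yields $T^{2j}(f)=\Pi_d^j(f)$ for all $j$, and hence $\rho_T(f)=2\rho_\Pi(f)$. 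So the task becomes: compute $\rho_\Pi(f)$.

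Next I would make the linear algebra explicit. As $L_d^{d+1}=0$, formula \eqref{eq:formpibin} gives $\Pi_d^k(f)-f=\sum_{j=1}^{d}\binom kj L_d^j(f)$ (with the convention $\binom kj:=0$ for $j>k$), and comparing coefficients — using $L_d^j(f)=\sum_{\ell=0}^{d-j}b_{\ell+j}x^\ell$ — shows that $\Pi_d^k(f)=f$ if and only if
\begin{equation*}
\sum_{j=1}^{d-i}\binom kj\, b_{i+j}=0\ \text{ in }R,\qquad\text{for all } 0\le i\le d-1.\tag{$\ast$}
\end{equation*}
For the divisibility in \eqref{eq:periodiv}: put $M=\prod_{i=1}^s p_i^{\alpha_i+\lfloor\log_{p_i}(d)\rfloor}$; a one-line estimate ($M\ge p_1^{\alpha_1+\lfloor\log_{p_1}d\rfloor}\ge p_1^{1+\lfloor\log_{p_1}d\rfloor}>d$) shows $M>d$, so Lemma \ref{lem:iffdivcond} (direction $(1)\Rightarrow(2)$, with $k=M$, $n=d$) gives $\prod_i p_i^{\alpha_i}\mid\binom Mj$ for all $1\le j\le d$. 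Since $\prod_i p_i^{\alpha_i}=\mathfrak N$ is the characteristic of $R$, each $\binom Mj\cdot 1_R=0$, so ($\ast$) holds for $k=M$; hence $\rho_\Pi(f)\mid M$ and $\rho_T(f)=2\rho_\Pi(f)\mid 2M$.

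For the equality, now assuming $b_d\in R^\times$, I would read ($\ast$) with $k=m:=\rho_\Pi(f)$ "from the top coefficient down": the $i=d-1$ equation is $m\,b_d=0$, giving $\mathfrak N\mid m$; and then a downward induction on $i$, at each step using the relations $\binom mj\cdot 1_R=0$ already obtained to annihilate all but the last term $\binom mt\,b_d$ and cancelling the unit $b_d$, yields $\mathfrak N\mid\binom mj$ for all $1\le j\le d$. In particular $m>d$, for otherwise the case $j=m$ would give $\mathfrak N\mid\binom mm=1$, impossible since $R\ne 0$ forces $\mathfrak N\ge 2$; hence Lemma \ref{lem:iffdivcond} (direction $(2)\Rightarrow(1)$, with $k=m$, $n=d$) applies and gives $M\mid m$. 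Together with $m\mid M$ from the previous paragraph this forces $m=M$, i.e. $\rho_T(f)=2M$. The main obstacle is precisely this last paragraph: extracting the divisibilities $\mathfrak N\mid\binom mj$ out of the single identity $\Pi_d^m(f)=f$ (rather than out of $\Pi_d^{\rho}(f)=f$ for an externally known $\rho$) genuinely requires both the unit hypothesis on $b_d$ and the triangular structure of ($\ast$), and one must separately verify $m>d$ before Lemma \ref{lem:iffdivcond} can legitimately be invoked.
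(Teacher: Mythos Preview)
Your proof is correct and follows essentially the same approach as the paper's: both reduce to the $\Pi$-period via $\rho_T(f)=2\rho_\Pi(f)$ (using Lemma~\ref{cyc:oddeve}), use \eqref{eq:formpibin} together with Lemma~\ref{lem:iffdivcond} to get the divisibility, and then---under the unit hypothesis on the leading coefficient---exploit the triangular system coming from $\Pi_d^{m}(f)=f$ to force $\mathfrak N\mid\binom{m}{j}$ for $1\le j\le d$, deduce $m>d$, and apply Lemma~\ref{lem:iffdivcond} in the reverse direction. The only cosmetic difference is that you run the top-down induction once (with the convention $\binom{m}{j}=0$ for $j>m$) and extract $m>d$ afterward, whereas the paper first rules out $m\le d$ by contradiction and then repeats the argument.
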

\begin{proof}
By \eqref{eq:formpibin}, for all $k \geq 1$, we have
\begin{equation}\label{eq:piform}
\Pi^k_n(f)=\sum_{j=0}^k \binom{k}{j} L^j_n(f).
\end{equation}
In particular, for $k=\prod_{i=1}^s p_i^{\alpha_i+\lfloor \log_{p_i}(n) \rfloor}$, by Lemma \ref{lem:iffdivcond}, we obtain
$\Pi_n^k(f)=f$; hence, we have $\rho_{\Pi}(f) |k $. Using Lemma \ref{cyc:oddeve}, we get $\rho_T(f)=2\rho_{\Pi}(f)$, and so we obtain \eqref{eq:periodiv}. 

Now assume that $b_{n}$ is in $R^\times$. Let $k=\rho_{\Pi}(f)=\dfrac{\rho_T(f)}{2}$. First, we claim that $k > n$. By contradiction, assume that $k \leq n$. From $\Pi^{k}_n(f)=f$, using \eqref{eq:piform}, we deduce $\sum_{j=1}^{k} \binom{k}{j}L_n^j(f)=0$, and in particular,
\begin{equation*}
\begin{cases}
\binom{k}{1}b_n =0,\\
\binom{k}{1}b_{n-1}+\binom{k}{2}b_n=0,\\
\dots\\
\binom{k}{1}b_{n-k+1}+\dots+\binom{k}{k}b_n=0.
\end{cases}
\end{equation*}
Since $b_n$ is in $R^{\times}$, we get 
$$
\binom{k}{1}=\dots=\binom{k}{k}\equiv 0 \pmod{\mathfrak{N}},
$$ and so $1=\binom{k}{k}\equiv 0 \pmod{\mathfrak{N}}$, a contradiction. Consequently, we have $k > n$. Using the same argument as before, we also have
$$
\binom{k}{1}= \dots =\binom{k}{n} \equiv 0 \pmod{\mathfrak{N}}.
$$
By Lemma \ref{lem:iffdivcond}, $\prod_{i=1}^s p_i^{\alpha_i+\lfloor \log_{p_i}(n) \rfloor}$ divides $k$. Consequently, we obtain 
$$\rho_T(f)=2\prod_{i=1}^s p_i^{\alpha_i+\lfloor \log_{p_i}(n) \rfloor}.$$ This completes the proof of the last statement of the proposition.
\end{proof}
\begin{lem}\label{crit:periodicity}
Let $f=\sum_{k=0}^{n}b_k x^k \in R[x]$ be an odd polynomial with $b_n \neq 0$. Then $f$ is $T$-periodic if and only if
\begin{align*}\label{eq:condper}
\sum_{j=0}^{\ell}\binom{\ell}{j}b_j \neq 0,
\end{align*}
for all $0 \leq \ell \leq \prod_{i=1}^s p_i^{\alpha_i+\lfloor \log_{p_i}(n) \rfloor}$.
\end{lem}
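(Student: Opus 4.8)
The plan is to transfer the problem from the nonlinear map $T$ to the linear, \emph{finite-order} map $\Pi_n$ on $R[x]_{\le n}$, and then to read off $T$-periodicity from the single scalar sequence of constant coefficients met along the $\Pi_n$-orbit of $f$.

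\emph{Step 1: the linear picture.} Put $N=\prod_{i=1}^s p_i^{\alpha_i+\lfloor\log_{p_i}(n)\rfloor}$ and, for $\ell\ge 0$, set $c_\ell=\sum_{j=0}^{\ell}\binom{\ell}{j}b_j$ (with the convention $b_j=0$ for $j>n$). First I would check, straight from \eqref{eq:formpibin}, that $c_\ell$ is exactly the constant coefficient of $\Pi_n^\ell(f)$, and that $\Pi_n^N=\mathrm{Id}$ on $R[x]_{\le n}$. For the latter: in $\Pi_n^N(f)=\sum_{j=0}^N\binom{N}{j}L_n^j(f)$ the terms with $j>n$ vanish since $L_n$ is nilpotent of index $n+1$, and the terms with $1\le j\le n$ vanish in $R$ because $\mathfrak{N}=\prod_i p_i^{\alpha_i}$ divides $\binom{N}{j}$ by Lemma \ref{lem:iffdivcond} (applied with $k=N$) — this is the only place positive characteristic is used, and it is what fails over $\mathbb{Z}$. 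A consequence is $c_{\ell+N}=c_\ell$ for all $\ell$, so the whole sequence $(c_\ell)_{\ell\ge 0}$ is governed by $c_0,\dots,c_{N-1}$; this is precisely why the finite range $0\le\ell\le N$ in the statement is enough.

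\emph{Step 2: the key bridge between $T$ and $\Pi_n$.} I would prove, by induction on $m\ge 0$, that $T^{2i}(f)$ is odd for all $0\le i\le m$ \emph{if and only if} $c_i\neq 0$ for all $0\le i\le m$, and that in this case $T^{2i}(f)=\Pi_n^i(f)$ for all $0\le i\le m+1$. The inductive step uses only the elementary fact that if $g\in R[x]_{\le n}$ is odd then $T(g)=g\cdot(x+1)-g_0$ is even and $T^2(g)=\Pi_n(g)$; hence the oddness of $\Pi_n^i(f)$ — which by Step 1 is the same as $c_i\neq 0$ — is exactly the condition that allows the $T$-orbit to advance by one further $\Pi_n$-step, while conversely the constant term of an odd $T^{2i}(f)=\Pi_n^i(f)$ is by definition nonzero.

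\emph{Step 3: conclusion.} If $f$ is $T$-periodic, its orbit is a $T$-cycle contained in $R[x]\setminus\{0\}$ (it cannot meet $0$, since $0$ is a fixed point of $T$ and $f\neq 0$), so Lemma \ref{cyc:oddeve} forces $f,T^2(f),T^4(f),\dots$ to be all odd; by Step 2 then $c_\ell\neq 0$ for \emph{every} $\ell\ge 0$, in particular for $0\le\ell\le N$. Conversely, if $c_\ell\neq 0$ for $0\le\ell\le N$, then by the $N$-periodicity of $(c_\ell)$ we get $c_i\neq 0$ for all $i\ge 0$, hence by Step 2 $T^{2i}(f)=\Pi_n^i(f)$ for every $i$, and in particular $T^{2N}(f)=\Pi_n^N(f)=f$ by Step 1, so $f$ is $T$-periodic. (The degenerate case $n=0$ is trivial: then $f=b_0\neq 0$ satisfies $T^2(f)=f$ and the criterion reduces to $b_0\neq 0$.) The main obstacle is purely bookkeeping: running the two nested inductions so that the parity alternation and the division-by-$x$ are always applied to the right iterate, and keeping in mind throughout that $\Pi_n$ is unipotent — so that $f\neq 0$ guarantees every $\Pi_n^i(f)$, and hence every $T^i(f)$ along the orbit, is nonzero.
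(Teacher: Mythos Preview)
Your proof is correct and follows essentially the same approach as the paper: bridging $T$ and the linear map $\Pi_n$ via the identity $T^{2\ell}(f)=\Pi_n^\ell(f)$ (valid as long as the constant terms stay nonzero), invoking Lemma~\ref{lem:iffdivcond} to obtain $\Pi_n^N=\mathrm{Id}$, and then reading off $T^{2N}(f)=f$. Your write-up is in fact more explicit than the paper's sketch --- you separate the induction in Step~2 cleanly and cite Lemma~\ref{cyc:oddeve} directly for the forward direction, whereas the paper gestures at a ``degree argument'' inherited from \cite{BP23} --- but the substance is the same.
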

\begin{proof}[Proof]
We sketch a proof which is similar to the one of \cite[Lemma 2.15]{BP23}. Assume that $f$ is $T$-periodic. Then, by degree argument, $T^{2\ell}(f)$ (resp., $T^{2\ell+1}(f)$) must be  odd (resp., even) for all $\ell \geq 0$. This is equivalent to the fact that $\sum_{j=0}^{\ell}\binom{\ell}{j}b_j$ is the constant term of $T^{2\ell}(f)$ and is nonzero, for all $\ell \geq 0$. Therefore $
\sum_{j=0}^{\ell}\binom{\ell}{j}b_j \neq 0$, for all $0 \leq \ell \leq \prod_{i=1}^s p_i^{\alpha_i+\lfloor \log_{p_i}(n) \rfloor}$. 

Conversely assume that $\sum_{j=0}^{\ell}\binom{\ell}{j}b_j \neq 0$ for all $0 \leq \ell \leq \prod_{i=1}^s p_i^{\alpha_i+\lfloor \log_{p_i}(n) \rfloor}$. Then
$$
T^{2\ell}(f)=\Pi_n^\ell(f),
$$ for all $0 \leq \ell \leq \prod_{i=1}^s p_i^{\alpha_i+\lfloor \log_{p_i}(n) \rfloor}$; in particular, by the same argument as in the proof of Proposition \ref{prop:periodze}, for $k=\prod_{i=1}^s p_i^{\alpha_i+\lfloor \log_{p_i}(n) \rfloor}$, we have 
$$
T^{2 k}(f)=\Pi_n^{k}(f)=f,
$$ and so $f$ is $T$-periodic. This completes the proof.
\end{proof}
The following result is established in \cite{BP23} when $R=\F_p$. As mentionned earlier, it is easy to see that it also holds for any finite ring $R$.
\begin{prop}\label{prop:preperiodic} 
Every polynomial in $R[x]$ is eventually $T$-periodic.
\end{prop}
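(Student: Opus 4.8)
The plan is to reduce Proposition~\ref{prop:preperiodic} to a single structural fact: the $R$-linear unipotent map $\Pi_d = \mathrm{Id} + L_d$ on $R[x]_{\leq d}$ has \emph{finite order} for every $d \geq 0$. When $d \geq 1$, put $K_d := \prod_{i=1}^s p_i^{\alpha_i + \lfloor \log_{p_i}(d) \rfloor}$; then $K_d > d$, so \eqref{eq:formpibin} gives $\Pi_d^{K_d} = \sum_{j=0}^{d} \binom{K_d}{j} L_d^j$, while Lemma~\ref{lem:iffdivcond} (applied with $n = d$, $k = K_d$, its hypothesis holding since $K_d > d \geq 1$) yields $\mathfrak{N} \mid \binom{K_d}{j}$ for $1 \leq j \leq d$. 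Since $R$ has characteristic $\mathfrak{N}$, all the terms with $j \geq 1$ vanish and $\Pi_d^{K_d} = \mathrm{Id}$; for $d = 0$ this is trivial, as $\Pi_0 = \mathrm{Id}$. The whole proof then consists of transporting this periodicity of $\Pi$ back to $T$.

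First I would record two elementary observations: the zero polynomial is $T$-periodic, and $T(g) = 0$ forces $g = 0$ — because $\Pi$ preserves degrees and leading coefficients, so $\Pi(g) \neq 0$ whenever $g$ is odd, whence $T(g) = x\,\Pi(g) \neq 0$ — so every iterate of a nonzero polynomial is again nonzero. We may therefore assume $f \neq 0$ and that no iterate vanishes. By part (3) of Lemma~\ref{lem:basiresT} the sequence $\bigl(\deg T^{2j}(f)\bigr)_{j \geq 0}$ is non-increasing, hence eventually constant, say equal to $d$ from some index $j_0$ on. Writing $h := T^{2j_0}(f)$, it now suffices to prove that $h$ is $T$-periodic, where $\deg T^{2j}(h) = d$ for every $j \geq 0$.

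The core of the argument is a dichotomy on the parities of the even iterates of $h$. If $T^{2j}(h)$ is odd for every $j$, then $T^{2j+1}(h) = x\,\Pi(T^{2j}(h))$ is even and $T^{2j+2}(h) = \Pi(T^{2j}(h))$, so by induction $T^{2j}(h) = \Pi_d^{\,j}(h)$, and taking $j = K_d$ gives $T^{2K_d}(h) = h$. If instead $g := T^{2j}(h)$ is even for some $j$ (which forces $d \geq 1$), write $g = x g'$; since $\deg T^2(g) = d$ and not $d-2$, the polynomial $g' = g/x$ must be odd, of degree $d-1$, with $T(g) = g'$ and $T^2(g) = x\,\Pi(g')$. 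Iterating, the fact that $\deg T^{2i}(h) = d$ must persist forever forces $\Pi^i(g')$ to be odd for all $i \geq 0$ — at each stage the only alternative is an even polynomial, which would drop the degree by $2$ — so that $T^{2j + 2i}(h) = x\,\Pi^i(g')$ for all $i$; choosing $i$ to be a period of $\Pi$ on $R[x]_{\leq d-1}$ gives $T^{2j + 2i}(h) = x g' = g$. In either case $h$ is eventually $T$-periodic, and hence so is $f$.

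I expect the only genuinely delicate step to be the book-keeping in the even branch: one must check carefully that, once the degrees of the even iterates have stabilized at $d$, the dynamics is forced to keep every $\Pi^i(g')$ odd — this is precisely where the degree estimates of Lemma~\ref{lem:basiresT} interact with the explicit unipotent description of $\Pi_d$; the remaining verifications are routine. I would also remark, as already noted after Lemma~\ref{lem:basiresT}, that for finite $R$ the statement is immediate from the finiteness of $R[x]_{\leq \deg f + 1}$; in the present generality this finiteness is replaced by the finite order of $\Pi$ on the modules $R[x]_{\leq d}$, so the real content lies in the infinite positive-characteristic case.
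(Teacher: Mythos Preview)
Your proof is correct and rests on the same engine as the paper's: the finite order of $\Pi_d = \mathrm{Id} + L_d$ on $R[x]_{\leq d}$, deduced from Lemma~\ref{lem:iffdivcond}. The difference is purely organizational. The paper passes directly to an iterate $T^m(f)$ of \emph{minimal} degree in the whole orbit; such an iterate is automatically odd, and minimality forces every $T^{2i}$ of it to remain odd (otherwise the next iterate would have smaller degree), so one lands immediately in your ``first case'' and finishes via $\Pi_d^{K_d} = \mathrm{Id}$ (packaged in the paper as Lemma~\ref{crit:periodicity}). You instead stabilize the sequence $\deg T^{2j}(f)$ at $d$ and then run a dichotomy according to whether the stabilized even iterates are all odd or not; your ``even'' branch is exactly what happens when the true minimum degree in the orbit is $d-1$ rather than $d$, and your analysis there effectively rediscovers the paper's argument one degree down, applied to $g'$. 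Both routes are sound; the paper's is a bit more economical because choosing the minimal-degree iterate collapses the dichotomy.
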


\begin{proof}

Clearly, the zero polynomial generates the trivial cycle $\{0\}$. Let $f=\sum_{i=0}^nb_kx^k \in R[x] \setminus \{0\}$, with $b_n \neq 0$.
By Lemma \ref{lem:basiresT}, we have $\deg(T^m(f)) \leq \deg(f)+1$ for all $m \geq 0$. We may replace $f$ by $T^m(f)$ ($m \geq 0$) such that $T^m(f)$ is of minimal degree in the orbit of $f$. In particular, $f$ is odd. In this case, an easy induction shows that $T^{2i}(f)$ is odd and $T^{2i+1}(f)$ is even for all $i \geq 0$. Note that the constant term of $T^{2i}(f)=\Pi_n^i(f)$ is 
$$
\sum_{j=0}^i \binom{i}{j}b_j \neq 0,
$$ for all $i \geq 0$. By Lemma \ref{crit:periodicity}, $f$ is $T$-periodic. This completes the proof.
\end{proof}

Together with Proposition \ref{prop:zero_char} and Proposition \ref{prop:periodze}, Proposition \ref{prop:preperiodic} completes the proof of Theorem \ref{main:thm1} of the introduction. We note that in the case where $R$ is a prime field, the main result of \cite{BP23} establishes a quadratic bound (in $\deg(f)$) on the number of $T$-iterations needed for a polynomial $f$ to reach a $T$-cycle. The authors do not know whether a similar bound holds for a general ring $R$ of positive characteristic -- this remains an interesting question for further study. However, in the case where $R$ is a finite field, we remark that the main result of \cite{BP23} (\cite[Theorem 1.1]{BP23}) holds without any change in the proofs, including the mentioned quadratic bound. The corresponding result is then:
\begin{thm}\label{thm:main} Let $\F_q$ be a finite field of characteristic $p$.
\begin{enumerate}[i)]
\item For any $k \geq 1$, a polynomial $f$ with $\deg(f)< p^k$ and $f(0) \neq 0$ is $T$-periodic if and only if its coefficients vector is of the form $B_kv$, where $v$ is a vector in $\F_q^{p^k}$ whose entries are all non-zero, and the matrix $B_k \in \mathrm{M}_{p^k \times p^k}(\F_q)$ is given by $B_{i,j} = (-1)^{i+j}{i-1 \choose j-1}$ if $i \geq j$ and $B_{i,j} = 0$ otherwise.\label{eq:e5}
    \item Let $f \in \F_q[x]$ be a polynomial of degree $d$. Then $T^{pd(d+1)-d}(f)$ is $T$-periodic.\label{eq:e1}
    \item Every $T$-periodic odd polynomial of degree $d$ has period $2p^k$ with $k=1+\lfloor \log_p(d) \rfloor$.\label{eq:e2}
    \item There are $q-1$ cycles of length $2$.\label{eq:e3}
    \item  For $k \geq 1$, the number of $T$-cycles of length $2p^k$ is given by $$\frac{(q-1)^{p^{k}}-(q-1)^{p^{k-1}}}{p^k}.$$\label{eq:e4}
\end{enumerate}
\end{thm}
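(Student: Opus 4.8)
The plan is to derive all five statements from the results already established in this section, specialized to $R=\F_q$; the one feature making the finite-field case cleaner than a general ring of positive characteristic is that a nonzero element of $\F_q$ is automatically a unit. I would begin with (iii): since $\F_q$ has characteristic $p$, the product $\prod_i p_i^{\alpha_i+\lfloor\log_{p_i}(\deg f)\rfloor}$ of Proposition~\ref{prop:periodze} collapses to $p^{1+\lfloor\log_p(\deg f)\rfloor}=p^k$, and the hypothesis $b_n\neq 0$ puts $b_n\in\F_q^\times$ for free, so the equality clause of Proposition~\ref{prop:periodze} gives $\rho_T(f)=2p^k$ for every $T$-periodic odd polynomial $f$ of degree $d\geq 1$. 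Statement (iv) is then immediate from Lemma~\ref{cyc:oddeve}: a $T$-cycle of length $2$ has one odd member $g$ with $\rho_T(g)=2$, which by (iii) forces $\deg g=0$, hence $g=a\in\F_q^\times$ and the cycle is $(a,ax)$; conversely each $(a,ax)$ with $a\in\F_q^\times$ is such a cycle and distinct $a$ give distinct cycles, so there are exactly $q-1$.

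For (i), I would fix $k$ and a polynomial $f=\sum_j b_jx^j$ with $\deg f<p^k$ and $f(0)\neq 0$, identified with its coefficient vector in $\F_q^{p^k}$, and work with $\Pi$ on the finite-dimensional space $\F_q[x]_{\leq p^k-1}$. There $\Pi=\mathrm{Id}+L$ with $L$ nilpotent of index $\leq p^k$, so in characteristic $p$ one has $\Pi^{p^k}=\mathrm{Id}+L^{p^k}=\mathrm{Id}$; hence the sequence $\ell\mapsto v_\ell:=\sum_{j=0}^\ell\binom{\ell}{j}b_j$, which by \eqref{eq:formpibin} is the constant term of $\Pi^\ell f$, is $p^k$-periodic. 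Lemma~\ref{crit:periodicity} then shows, using the $p^k$-periodicity of $(v_\ell)$ to reduce its range, that $f$ is $T$-periodic if and only if $v_\ell\neq 0$ for $0\leq\ell\leq p^k-1$. Finally, the linear map $(b_j)_j\mapsto(v_\ell)_\ell$ of $\F_q^{p^k}$ is the binomial transform, lower triangular with unit diagonal, hence invertible over any commutative ring with inverse $v_\ell\mapsto b_j=\sum_{\ell\leq j}(-1)^{j-\ell}\binom{j}{\ell}v_\ell$; in $1$-indexed matrix form this inverse is multiplication by the matrix $B_k$ with $(B_k)_{i,j}=(-1)^{i+j}\binom{i-1}{j-1}$ for $i\geq j$ and $0$ otherwise. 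Therefore $f$ is $T$-periodic exactly when its coefficient vector equals $B_kv$ for some $v\in\F_q^{p^k}$ with all coordinates nonzero, the coordinate $v_0=b_0=f(0)$ recovering the standing hypothesis.

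Statement (v) is then a counting consequence of (i) and (iii). By (iii) the $T$-periodic odd polynomials of period $2p^k$ (for $k\geq 1$) are precisely those of degree $d$ with $p^{k-1}\leq d\leq p^k-1$. By (i), the number of $T$-periodic odd polynomials of degree $<p^m$ equals $(q-1)^{p^m}$ (one for each $v\in(\F_q^\times)^{p^m}$), so taking $m=k$ and $m=k-1$ and subtracting shows there are $(q-1)^{p^k}-(q-1)^{p^{k-1}}$ of period $2p^k$. Since by Lemma~\ref{cyc:oddeve} every $T$-cycle of length $2p^k$ contains exactly $p^k$ odd polynomials and distinct cycles are disjoint, the number of such cycles is $\big((q-1)^{p^k}-(q-1)^{p^{k-1}}\big)/p^k$.

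This leaves (ii), which I expect to be the main obstacle and which I would prove by induction on $d=\deg f$ (the cases $f=0$ and $d=0$ being immediate, as such $f$ is already $T$-periodic). If $f$ is even, then $T(f)=f/x$ has degree $d-1$ and the induction hypothesis applied to $T(f)$ suffices because $1+p(d-1)d-(d-1)\leq pd(d+1)-d$ for $d\geq 1$. If $f$ is odd, then, using that $\Pi$ is degree-preserving and unipotent and that $\Pi^{p^k}=\mathrm{Id}$ on $\F_q[x]_{\leq d}$ for the least power $p^k>d$ (so $p^k\leq pd$), one has $T^{2j}(f)=\Pi^j(f)$ for all $j$ below the first index $j^\ast\geq 1$ with $\Pi^{j^\ast}(f)$ even; either no such $j^\ast$ exists and $f$ is already $T$-periodic, or $j^\ast\leq p^k-1\leq pd-1$, in which case $T^{2j^\ast+1}(f)=\Pi^{j^\ast}(f)/x$ has degree $\leq d-1$, the induction hypothesis applies to it, and bounding $2j^\ast+1$ plus the inductive count by $2(pd-1)+1+p(d-1)d-(d-1)=pd(d+1)-d$ closes the induction. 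The genuinely delicate point is precisely this bookkeeping of the iteration count; the rest is an assembly of Lemmas~\ref{lem:basiresT}, \ref{crit:periodicity}, \ref{cyc:oddeve}, Proposition~\ref{prop:periodze} and the invertibility of the binomial transform, exactly as in \cite{BP23}.
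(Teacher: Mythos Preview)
Your proposal is correct and follows essentially the same route as the paper's own argument, which is itself a terse ``idea of proof'' pointing back to \cite{BP23}: part~(iii) from Proposition~\ref{prop:periodze} (with $b_n\in\F_q^\times$ automatic), part~(i) via Lemma~\ref{crit:periodicity} combined with the invertibility of the binomial transform (whose inverse is the matrix $B_k$), part~(v) by counting via~(i) and~(iii) and dividing by the number of odd elements per cycle from Lemma~\ref{cyc:oddeve}, and part~(ii) by the same induction on $d$ as \cite[Corollary~2.22]{BP23}. The only place you diverge is~(iv): the paper solves $T^2(f)=f$ directly for odd $f$ to obtain $f=f_0$, whereas you deduce $\deg g=0$ from~(iii); both are valid and equally short. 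Your bookkeeping in~(ii) (the inequality $2(pd-1)+1+p(d-1)d-(d-1)=pd(d+1)-d$ and the bound $p^k\le pd$ for the least $p^k>d$) is exactly the computation underlying the cited induction, and one small imprecision---you write ``$T^{2j}(f)=\Pi^j(f)$ for all $j$ below $j^\ast$'' but in fact need and use it for $j=j^\ast$ as well---does not affect the argument.
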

\begin{proof}[Idea of the proof]
\begin{enumerate}[i)]
\item Let $k \geq 1$. Denote the inverse of $B_k$ by $A_k$. For an odd polynomial $f=\sum_{i=0}^{p^k-1}a_ix^i$, by \cite[Lemma 2.17]{BP23}, we have
\begin{equation}\label{eq:linkeqakai}
(A_k \cdot (a_0,\dots,a_{p^k-1})^t)_{\ell+1}=\sum_{j=0}^\ell\binom{\ell}{j}a_j,
\end{equation}
for all $0 \leq \ell \leq p^k-1$. By \cite[Lemma 2.15]{BP23} (which holds over any finite field), $f$ is $T$-periodic if and only if all the elements in \eqref{eq:linkeqakai} are nonzero. In other words, the vector coefficient of $f$ is of the form $B_kv$, where $v$ is a vector in $\F_q^{p^k}$ whose entries are all non-zero. See \cite[Lemma 2.18]{BP23} for more details.
\item The proof is the same as the one of \cite[Corollary 2.22]{BP23}, which is by induction on $d$.
\item This follows from Proposition \ref{prop:periodze}.
\item By Lemma \ref{cyc:oddeve}, any $T$-cycle contains an odd polynomial. Note that an odd polynomial $f \in \mathbb{F}_q[x]$ is $T$-periodic of period $2$, if and only if $T^2(f)=f$, that is, $\frac{(x+1)f-f_0}{x}=f$, or equivalently $f=f_0$. Therefore the only $T$-cycles of length $2$ are $(a,ax)$ with $a \in \mathbb{F}_q^*$.
\item Let $k \geq 1$. From \eqref{eq:e5} and \eqref{eq:e2}, we deduce that there are $(q-1)^{p^k}$ (resp., $(q-1)^{p^{k-1}}$) odd polynomials of period dividing $2p^{k}$ (resp., $2p^{k-1}$). Using Lemma \ref{cyc:oddeve}, we see that the number of $T$-cycles of length $2p^k$ is given by $$\frac{(q-1)^{p^{k}}-(q-1)^{p^{k-1}}}{p^k}.$$ See also \cite[Proposition 2.20]{BP23}.
\end{enumerate}
\end{proof}

\section{The Collatz map in rings of formal power series}\label{sec:power}
In this section, we prove Theorem \ref{main:thm2}.
\subsection{Arbitrary rings} Let $R$ be any commutative ring. We consider the analogue of the Collatz map for $R[[x]]$ as in \cite{BP23}:
\begin{align*}
T \colon R[[x]]& \rightarrow R[[x]]\\
         f        & \mapsto
         \begin{cases}
         (x+1)f-f_0 & \textrm{if}\,\, f_0 \neq 0,\\
         \frac{f}{x} & \textrm{otherwise}.
         \end{cases}
\end{align*}
Our aim (Theorem \ref{main:thm2}) is, in fact, when $R$ is finite, to characterize eventually $T$-periodic series and also to count the number of $T$-periodic cycles of a given period.
The method we use here is inspired by \cite{Lag90}, where the Collatz map on $\Z_2$ -- the arithmetic analogue of $R[[x]]$ -- is investigated. 

Let $\mathcal{I}_R$ be the set of all polynomials $f \in R[x]$ with $f_0 \in R^\times$; this set is clearly a multiplicative subset of $R[x]$. Denote by $\mathcal{S}_R$ the localization of $R[x]$ at $\mathcal{I}_R$. Since series in $\mathcal{I}_R$ are invertible in $R[[x]]$, the morphism $i_R: f/g \in \mathcal{S}_{R} \hookrightarrow f g^{-1} \in R[[x]]$ ($f \in R[x]$ and $g \in \mathcal{I}_R$) is injective. In the sequel, we may identify $\mathcal{S}_R$ with its image in $R[[x]]$. Explicitly, we have
$$
\mathcal{S}_R=\left\lbrace u(1+xv)^{-1} \mid u,v \in R[x] \right\rbrace.
$$
In our investigation, we shall use the following modified Collatz map:
\begin{align*}
\overline{T} \colon R[[x]]& \rightarrow R[[x]]\\
         f        & \mapsto
         \begin{cases}
         \frac{(x+1)f-f_0}{x} & \textrm{if}\,\, f_0 \neq 0,\\
         \frac{f}{x} & \textrm{otherwise}.
         \end{cases}
\end{align*}
For any $v \in R$, let
$$
p(v)=
\begin{cases}
 0 & \textrm{if}\,\,v=0,\\
 1 & \textrm{if}\,\, v \neq 0,
\end{cases}
$$
and let 
\begin{align*}
\overline{T}_v \colon R((x))& \rightarrow R((x))\\
         f        & \mapsto
         \frac{(x+1)^{p(v)}f-v}{x}.
\end{align*}
For any $f \in R[[x]]$, we define
$${\bf v}^T_{f}=\left({\bf v}^T_f(0),{\bf v}_f^T(1),{\bf v}^T_f(2),\dots\right)=\left(T^0(f)(0),T^1(f)(0),T^2(f)(0),\dots \right) \in R^{\mathbb{N}}$$ (resp., $${\bf v}^{\T}_f=({\bf v}^{\T}_f(0),{\bf v}^{\T}_f(1),{\bf v}^{\T}_f(2),\dots)=\left(\overline{T}^0(f)(0),\overline{T}^1(f)(0),\overline{T}^2(f)(0),\dots\right) \in R^{\mathbb{N}}.)$$ Also, for $n \geq 1$, let ${\bf v}^T_{f,n}=\left({\bf v}^T_f(0),\dots, {\bf v}^T_f(n-1)\right) \in R^n$ and ${\bf v}^{\T}_{f,n}=\left({\bf v}^{\T}_f(0),\dots, {\bf v}^{\T}_f(n-1)\right) \in R^n$. Moreover, for $n \geq 1$, let 
$$
\Omega_n =\left\lbrace f \in R[[x]] \mid T^n(f)=f \right \rbrace,
$$
$$
\overline{\Omega}_n =\left\lbrace f \in R[[x]] \mid \overline{T}^n(f)=f \right \rbrace,
$$
and
$$
\Omega^*_n= \left\lbrace f \in R[[x]] \mid f\,\, \textrm{is $T$-periodic of $T$-period}\,\,n  \right\rbrace.
$$
For ${\bf v}=(v_0,v_1,\dots,v_{n-1}) \in R^n$ ($n\geq 1$), we define the map
$$
\T_{{\bf v}}=\overline{T}_{v_{n-1}}\circ\cdots \circ \overline{T}_{v_0}:R((x)) \rightarrow R((x)).$$
\begin{lem}\label{lem:fundiden}
For ${\bf v}=(v_0,\dots,v_{n-1}) \in R^n$ ($n \geq 1$) and $f \in R[[x]]$, we have
$$
\T_{{\bf v}}(f)=\frac{1}{x^{n}}\left( (x+1)^{\sum_{k=0}^{n-1}p(v_k)}f-\sum_{j=0}^{n-1}v_jx^{j}(x+1)^{\sum_{k=j+1}^{n-1}p(v_k)} \right).
$$
\end{lem}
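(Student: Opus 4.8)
The plan is to prove the identity by induction on $n$. The base case $n=1$ is immediate: with ${\bf v}=(v_0)$ the right-hand side is $\frac{1}{x}\big((x+1)^{p(v_0)}f-v_0\big)$, because the single summand ($j=0$) carries the exponent $\sum_{k=1}^{0}p(v_k)=0$, an empty sum; and this is exactly $\overline{T}_{v_0}(f)$ by the definition of $\overline{T}_v$.

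For the inductive step, assume the formula for some $n\geq 1$ and take ${\bf v}=(v_0,\dots,v_n)\in R^{n+1}$. By the definition of the composition, $\overline{T}_{{\bf v}}=\overline{T}_{v_n}\circ\overline{T}_{{\bf v}'}$ where ${\bf v}'=(v_0,\dots,v_{n-1})$. I would substitute the inductive expression for $\overline{T}_{{\bf v}'}(f)$ into $\overline{T}_{v_n}(g)=\frac{(x+1)^{p(v_n)}g-v_n}{x}$. Distributing $(x+1)^{p(v_n)}$ over the bracket and absorbing the extra factor $x^{-1}$, the two powers of $x$ combine to $x^{n+1}$ and every power of $x+1$ increases by $p(v_n)$: the coefficient of $f$ becomes $(x+1)^{\sum_{k=0}^{n}p(v_k)}$, and the $j$-th summand ($0\leq j\leq n-1$) acquires exponent $\sum_{k=j+1}^{n}p(v_k)$. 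Finally the term $-v_n$ contributes $-v_nx^{n}/x^{n+1}$, and since $v_nx^{n}=v_nx^{n}(x+1)^{\sum_{k=n+1}^{n}p(v_k)}$ (empty exponent), this is precisely the $j=n$ term needed to extend the sum to $\sum_{j=0}^{n}$. This yields the claimed formula with $n$ replaced by $n+1$, completing the induction.

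I do not expect any genuine difficulty: the content is purely a telescoping bookkeeping of the exponents of $x$ and of $x+1$ under iterated application of $\overline{T}_v$. The only points requiring care are the empty-sum convention $\sum_{k=n+1}^{n}p(v_k)=0$ (which makes $(x+1)^0=1$ and lets the final $-v_n$ term fold into the sum) and the fact that the whole computation takes place in $R((x))$, where multiplication by $x^{-1}$ is unambiguously defined, so no issue of convergence or invertibility arises.
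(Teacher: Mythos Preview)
Your proposal is correct and follows essentially the same approach as the paper: induction on $n$, with the decomposition $\overline{T}_{{\bf v}}=\overline{T}_{v_n}\circ\overline{T}_{{\bf v}'}$ where ${\bf v}'=(v_0,\dots,v_{n-1})$, and the same bookkeeping of the extra factor $x^{-1}$ and the added exponent $p(v_n)$ on each $(x+1)$-power. Your explicit remarks on the empty-sum convention and on working in $R((x))$ are accurate and match what the paper implicitly uses.
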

\begin{proof}
Let $f \in R[[x]]$. We prove the claim by induction on the length $n$ of ${\bf v}$. This is clear when $n=1$. Assume this is true for some $n \geq 1$. Let ${\bf v}=(v_0,\dots,v_n) \in R^{n+1}$ of length $n+1$. Write ${\bf v}'=(v_0,\dots,v_{n-1}) \in R^n$. By the induction assumption, we have
$$
\T_{{\bf v}'}(f)=\frac{1}{x^{n}}\left( (x+1)^{\sum_{k=0}^{n-1}p(v_k)}f-\sum_{j=0}^{n-1}v_jx^{j}(x+1)^{\sum_{k=j+1}^{n-1}p(v_k)} \right).
$$ Hence we obtain
\begin{align*}
\T_{{\bf v}}(f)&=\T_{v_n}(\T_{{\bf v}'}(f))\\
&=\frac{(x+1)^{p(v_n)}\T_{{\bf v}'}(f)-v_n}{x}\\
&=\frac{(x+1)^{p(v_n)}\left( \frac{1}{x^{n}}\left( (x+1)^{\sum_{k=0}^{n-1}p(v_k)}f-\sum_{j=0}^{n-1}v_jx^{j}(x+1)^{\sum_{k=j+1}^{n-1}p(v_k)} \right) \right)-v_n}{x}\\
&=\frac{1}{x^{n+1}}\left( (x+1)^{\sum_{k=0}^{n}p(v_k)}f-\sum_{j=0}^{n}v_jx^{j}(x+1)^{\sum_{k=j+1}^{n}p(v_k)} \right).
\end{align*}
Therefore the claim holds for $n+1$. This completes the proof.
\end{proof}
Roughly speaking, the following result asserts that any $\overline{T}$-periodic series $f$ is uniquely determined by ${\bf v}_f^{\T}$:
\begin{prop}\label{prop:prescribconsterm}
Let ${\bf v}=(v_0,v_1,\dots,v_{n-1}) \in R^n$ ($n \geq 1$). Then there exists a unique  $f \in \overline{\Omega}_n$ such that ${\bf v}^{\T}_{f,n}={\bf v}$, given by:
$$
f=\frac{\sum_{j=0}^{n-1}v_jx^{j}(x+1)^{\sum_{k=j+1}^{n-1}p(v_{k})}}{(x+1)^{\sum_{k=0}^{n-1}p(v_k)}-x^{n}}.
$$
\end{prop}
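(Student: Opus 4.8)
The plan is to use the explicit formula from Lemma \ref{lem:fundiden}. Set $D = (x+1)^{\sum_{k=0}^{n-1}p(v_k)} - x^n$ and $N = \sum_{j=0}^{n-1} v_j x^j (x+1)^{\sum_{k=j+1}^{n-1}p(v_k)}$, and let $f = N/D$. The first task is to check that this expression makes sense as an element of $R[[x]]$: since $D$ has constant term $(x+1)^{(\cdot)}\big|_{x=0} = 1$ (the $-x^n$ term vanishes at $0$), $D$ lies in $\mathcal{I}_R$ and hence is invertible in $R[[x]]$, so $f$ is a well-defined power series. Next, applying Lemma \ref{lem:fundiden} with this $f$ gives $\overline{T}_{\mathbf v}(f) = \frac{1}{x^n}\big((x+1)^{\sum p(v_k)} f - N\big) = \frac{1}{x^n}\big((x+1)^{\sum p(v_k)} f - Df\big) = \frac{1}{x^n}\big(x^n f\big) = f$, so $f \in \overline\Omega_n$.

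The second task is to verify that $\mathbf v^{\overline T}_{f,n} = \mathbf v$, i.e. that the first $n$ constant terms produced by iterating $\overline T$ on $f$ are exactly $v_0,\dots,v_{n-1}$. Here I would argue by induction on $n$, or more transparently: for each $0 \le m \le n-1$, Lemma \ref{lem:fundiden} applied to the truncated vector $(v_0,\dots,v_{m-1})$ shows that $\overline T^{\,m}(f) = \overline T_{(v_0,\dots,v_{m-1})}(f)$ is itself a power series with a closed form, and one must check inductively that the map $\overline T$ applied to $\overline T^{\,m}(f)$ uses precisely the scalar $v_m$ — that is, that the constant term of $\overline T^{\,m}(f)$ equals $v_m$, so that the definition of $\overline T$ (which branches on whether that constant term is zero or not) agrees with $\overline T_{v_m}$. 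This reduces to evaluating the closed form of $\overline T^{\,m}(f)$ at $x = 0$; a direct computation with the formula from Lemma \ref{lem:fundiden} gives constant term $v_m$. The key point making the branches match is that $p(v_m)$ was built into the formula exactly to record whether $v_m = 0$.

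For uniqueness, suppose $g \in \overline\Omega_n$ also satisfies $\mathbf v^{\overline T}_{g,n} = \mathbf v$. Then, by the same matching of branches as above, $\overline T^{\,n}(g) = \overline T_{\mathbf v}(g)$, so the periodicity condition $\overline T^{\,n}(g) = g$ becomes $\overline T_{\mathbf v}(g) = g$. Plugging $g$ into Lemma \ref{lem:fundiden} gives $x^n g = (x+1)^{\sum p(v_k)} g - N$, i.e. $D g = N$ in $R((x))$; since $D$ is a unit in $R[[x]]$ this forces $g = N/D = f$.

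The main obstacle is the branch-matching bookkeeping in the second and third tasks: one must be careful that applying the ordinary map $\overline T$ (with its case distinction on the constant term) to the intermediate iterates genuinely reproduces the $\overline T_{v_m}$ operators, which requires knowing the constant term of each partial iterate $\overline T^{\,m}(f)$ in advance — and this is precisely what the closed form from Lemma \ref{lem:fundiden} delivers, so the argument is circular-looking but resolves cleanly by induction on $m$. The algebraic identities themselves ($Df = N \Leftrightarrow \overline T_{\mathbf v}(f) = f$, and the constant-term evaluation) are routine once the formula from Lemma \ref{lem:fundiden} is in hand.
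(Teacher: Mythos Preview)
Your proposal is correct and follows essentially the same route as the paper: use Lemma \ref{lem:fundiden} to obtain the closed form, then verify by induction on $m$ that the constant term of $\overline T^{\,m}(f)$ is $v_m$ (so that $\overline T$ and $\overline T_{v_m}$ coincide at each step), and deduce uniqueness from the invertibility of $D$ in $R[[x]]$. One presentational slip: at the end of your first paragraph you conclude ``so $f \in \overline\Omega_n$'' directly from $\overline T_{\mathbf v}(f)=f$, but this implication only holds \emph{after} the branch-matching induction of your second paragraph has been carried out; the paper handles this by first proving ${\bf v}^{\overline T}_{g,n}=\mathbf v$ and only then concluding $\overline T^n(g)=\overline T_{\mathbf v}(g)=g$.
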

\begin{proof}
Let $f \in \overline{\Omega}_n$ such that ${\bf v}^{\T}_{f,n}={\bf v}$. Then, we have 
\begin{equation*}\label{eq:equcom}
f=\T^n(f)=\overline{T}_{{\bf v}}(f),
\end{equation*} and, by Lemma \ref{lem:fundiden}, we get
$$
f=\frac{1}{x^{n}}\left( (x+1)^{\sum_{k=0}^{n-1}p(v_k)}f-\sum_{j=0}^{n-1}v_jx^{j}(x+1)^{\sum_{k=j+1}^{n-1}p(v_{k})} \right),
$$
from which we obtain 
\begin{equation*}\label{eq:fformu}
f=\frac{\sum_{j=0}^{n-1}v_jx^{j}(x+1)^{\sum_{k=j+1}^{n-1}p(v_{k})}}{(x+1)^{\sum_{k=0}^{n-1}p(v_k)}-x^{n}} \in \mathcal{S}_R.
\end{equation*}
Hence we have the unicity. 

Now we are going to prove that, for 
\begin{equation}\label{eq:exprg}
g=\frac{\sum_{j=0}^{n-1}v_jx^{j}(x+1)^{\sum_{k=j+1}^{n-1}p(v_{k})}}{(x+1)^{\sum_{k=0}^{n-1}p(v_k)}-x^{n}} \in \mathcal{S}_R,
\end{equation}
 we have $\overline{T}^n(g)=g$ (that is, $g \in \overline{\Omega}_n$) and that ${\bf v}^{\T}_{g,n}={\bf v}$. Since $g=\overline{T}_{{\bf v}}(g)$ by the above argument, it suffices to prove that ${\bf v}^{\T}_{g,n}={\bf v}$. Let us prove, by induction, that ${\bf v}^{\T}_g(\ell)=v_\ell$, for all $0 \leq \ell \leq n-1$. For $j=0$: it is clear that $g(0)=v_0$, and so ${\bf v}^{\T}_g(0)=v_0$. Assume that ${\bf v}^{\T}_g(0)= v_0, \dots,{\bf v}^{\T}_g(\ell)= v_\ell$, for some $ 0 \leq \ell< n-1$. Then 
\begin{align*}
\overline{T}^{\ell+1}(g)&=\overline{T}_{(v_0,\dots,v_\ell)}(g)\\
&=\frac{1}{x^{\ell+1}}\left( (x+1)^{\sum_{k=0}^{\ell}p(v_k)}g-\sum_{j=0}^{\ell}v_jx^{j}(x+1)^{\sum_{k=j+1}^{\ell}p(v_k)} \right)\quad \textrm{by Lemma \ref{lem:fundiden}}\\
&=\frac{1}{(x+1)^{\sum_{k=0}^{n-1}p(v_k)}-x^{n}} \left((x+1)^{\sum_{k=0}^\ell p(v_k)} \left( v_{\ell+1}+xA(x) \right)+ x^{n-\ell-1} B(x) \right),
\end{align*} for some $A(x),B(x) \in R[x]$. Noticing that $n-\ell-1 \geq 1$, we have $\left[\overline{T}^{\ell+1}(g)\right](0)=v_{\ell+1}$, and so ${\bf v}^{\T}_{g}(\ell+1)=v_{\ell+1}$. This completes the proof.
\end{proof}
As a generalization of Proposition \ref{prop:prescribconsterm}, one may ask the following; note that it holds true for the condensed Collatz map on $\mathbb{Z}_2$.
\begin{qst}
For any ${\bf v} \in R^{\mathbb{N}}$, is there $f \in R[[x]]$ such that ${\bf v}^{\overline{T}}_f={\bf v}$?
\end{qst}
\begin{cor}\label{cor:inclbaromega}
For all $n \geq 1$, we have $\overline{\Omega}_n \subset \mathcal{S}_R$.
\end{cor}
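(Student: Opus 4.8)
The plan is to read this off directly from Proposition \ref{prop:prescribconsterm}. Fix $n \geq 1$ and let $f \in \overline{\Omega}_n$, so that $\overline{T}^n(f) = f$. First I would form the vector
$$
{\bf v} = {\bf v}^{\overline{T}}_{f,n} = \left( {\bf v}^{\overline{T}}_f(0), \dots, {\bf v}^{\overline{T}}_f(n-1) \right) \in R^n,
$$
which is well-defined since $f \in R[[x]]$. By construction $f$ is an element of $\overline{\Omega}_n$ whose associated length-$n$ constant-term vector is exactly ${\bf v}$, so $f$ is a legitimate candidate for the object described in Proposition \ref{prop:prescribconsterm}.

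Next I would invoke the uniqueness clause of that proposition: it says there is exactly one $g \in \overline{\Omega}_n$ with ${\bf v}^{\overline{T}}_{g,n} = {\bf v}$, and it exhibits this $g$ as
$$
g = \frac{\sum_{j=0}^{n-1} v_j x^j (x+1)^{\sum_{k=j+1}^{n-1} p(v_k)}}{(x+1)^{\sum_{k=0}^{n-1} p(v_k)} - x^n}.
$$
Hence $f = g$. The denominator here has constant term $1 - 0 = 1 \in R^\times$, so it belongs to the multiplicative set $\mathcal{I}_R$, while the numerator is an honest polynomial in $R[x]$; therefore $f = g$ lies in the localization $\mathcal{S}_R = \{ u(1+xv)^{-1} \mid u,v \in R[x] \}$. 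Since $f \in \overline{\Omega}_n$ was arbitrary, $\overline{\Omega}_n \subset \mathcal{S}_R$.

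There is essentially no obstacle: the entire content is already packaged in Proposition \ref{prop:prescribconsterm}, whose proof even records that the exhibited formula lies in $\mathcal{S}_R$. The only thing to be careful about is the logical step of applying uniqueness with ${\bf v}$ chosen to be the orbit data of $f$ itself, which is what forces $f$ to coincide with the explicit rational expression.
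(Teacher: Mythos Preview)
Your proof is correct and follows essentially the same route as the paper: apply Proposition \ref{prop:prescribconsterm} with ${\bf v} = {\bf v}^{\overline{T}}_{f,n}$ to identify $f$ with the explicit rational expression, and observe that this expression lies in $\mathcal{S}_R$. The paper's version simply asserts ``which clearly belongs to $\mathcal{S}_R$'' where you spell out that the denominator has constant term $1$.
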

\begin{proof}
Let $f \in \overline{\Omega}_n$. Write ${\bf v}^{\overline{T}}_{f,n}=(v_0,\dots,v_{n-1})$. By Proposition \ref{prop:prescribconsterm}, we have
$$
f=\frac{\sum_{j=0}^{n-1}v_jx^{j}(x+1)^{\sum_{k=j+1}^{n-1}p(v_{k})}}{(x+1)^{\sum_{k=0}^{n-1}p(v_k)}-x^{n}},
$$ which clearly belongs to $\mathcal{S}_R$. Therefore we have $\overline{\Omega}_n \subset \mathcal{S}_R$.
\end{proof}
\begin{defn}
A vector $(v_0,v_1,\dots) \in R^{\mathbb{N}}$ is called {\it zero dense} if it contains no consecutive nonzero coordinates; in other words, for all $i \geq 0$, we have $v_i=0$ or $v_{i+1}=0$. A vector $(v_0,v_1,\dots,v_{n-1}) \in R^n$ ($n \geq 1$) is called {\it cyclically zero dense} if the infinite concatenation 
$$
(v_0,v_1,\dots,v_{n-1}, v_0,v_1,\dots,v_{n-1}, \dots, v_0,v_1,\dots,v_{n-1},\dots) \in R^{\mathbb{N}}
$$ is zero dense in the preceding sense.
\end{defn}
The following observation follows directly from the definition of $T$ and $\overline{T}$:
\begin{lem}\label{lem:essnpresc}
Let $f \in R[[x]]$. Then ${\bf v}^T_{f}$ is obtained from ${\bf v}^{\T}_f$ by inserting $0$ after each nonzero coordinate.   Conversely ${\bf v}^{\T}_f$ is obtained from ${\bf v}^T_{f}$ by removing the zero coordinate following any nonzero coordinate. In particular, ${\bf v}^T_f$ is zero dense.  
\end{lem}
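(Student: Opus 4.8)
The plan is to reduce everything to one elementary computation comparing $T$ and $\overline{T}$ on $R[[x]]$, and then to run a bookkeeping induction.

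First I would record the key observation. For $g\in R[[x]]$ with $g_0=g(0)\neq 0$, the series $T(g)=(x+1)g-g_0$ has constant term $1\cdot g_0-g_0=0$, so $T^2(g)=T(g)/x=\frac{(x+1)g-g_0}{x}=\overline{T}(g)$; whereas if $g(0)=0$ then already $T(g)=g/x=\overline{T}(g)$. In words: moving one step along the $\overline{T}$-orbit corresponds to exactly one $T$-step when $g(0)=0$, and to exactly two $T$-steps — the intermediate term having constant coefficient $0$ — when $g(0)\neq 0$. (This also reconfirms that $\overline{T}$ maps $R[[x]]$ into itself, and that in a $T$-orbit any term with nonzero constant coefficient is immediately followed by a term with constant coefficient $0$.)

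Next I would make this precise by induction on $n\geq 0$. Put $m_0=0$ and $m_{n+1}=m_n+1+p\bigl(\overline{T}^{\,n}(f)(0)\bigr)$, so that $m_{n+1}-m_n\in\{1,2\}$ according to whether $\overline{T}^{\,n}(f)(0)$ vanishes. The inductive claim is that $T^{m_n}(f)=\overline{T}^{\,n}(f)$ and that the finite block $\bigl(T^{m_n}(f)(0),T^{m_n+1}(f)(0),\dots,T^{m_{n+1}-1}(f)(0)\bigr)$ equals $\bigl(\overline{T}^{\,n}(f)(0)\bigr)$ if this value is $0$, and equals $\bigl(\overline{T}^{\,n}(f)(0),\,0\bigr)$ if it is nonzero. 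The base case $n=0$ is immediate, and the inductive step is exactly the key observation applied to $g=T^{m_n}(f)=\overline{T}^{\,n}(f)$. Concatenating these blocks over all $n$ shows that ${\bf v}^T_f$ is obtained from ${\bf v}^{\overline{T}}_f$ by inserting a $0$ after each nonzero coordinate.

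Finally, the converse description of ${\bf v}^{\overline{T}}_f$ in terms of ${\bf v}^T_f$ is simply the inverse of this insertion: it is unambiguous because, by the previous paragraph, every nonzero coordinate of ${\bf v}^T_f$ is immediately followed by a $0$, so deleting that forced $0$ recovers ${\bf v}^{\overline{T}}_f$. The assertion that ${\bf v}^T_f$ is zero dense is then the special case that two consecutive coordinates of ${\bf v}^T_f$ are never both nonzero. I do not anticipate a genuine obstacle here; the only mildly delicate point is keeping the index bookkeeping ($m_n$) consistent and verifying that the deletion map is well defined on the image, i.e. that one never has to "insert a $0$" where a forced $0$ is already present.
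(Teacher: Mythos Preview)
Your argument is correct and is precisely the detailed unpacking of what the paper has in mind: the paper states the lemma without proof, remarking only that it ``follows directly from the definition of $T$ and $\overline{T}$,'' and your key observation (that $T^2=\overline{T}$ on odd series and $T=\overline{T}$ on even series) together with the bookkeeping induction is exactly that direct verification made explicit.
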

Moreover we have the following:
\begin{lem}\label{lem:backforth}
Let $f \in R[[x]]$.
\begin{enumerate}
    \item Let $n \geq 1$ and let $s$ be the number of nonzero coordinates in ${\bf v}^{\T}_{f,n}$. Then we have $\overline{T}^n(f)=T^{n+s}(f)$. Moreover, ${\bf v}^T_{f,n+s}$ is obtained from ${\bf v}^{\T}_{f,n}$ by inserting $0$ after each nonzero coordinate.
    \item Let $n \geq 1$. Assume that ${\bf v}^T_f(n-1)=0$ and let $s$ denote the number of nonzero coordinates in ${\bf v}^T_{f,n}$. Then we have $T^n(f)=\overline{T}^{n-s}(f)$. Moreover, ${\bf v}^{T}_{f,n-s}$ is obtained from ${\bf v}^T_{f,n}$ by removing the zero coordinate following any nonzero coordinate. 
\end{enumerate}
\end{lem}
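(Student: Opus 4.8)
The plan is to reduce everything to the one-step dictionary between $T$ and $\overline{T}$: if $f(0)=0$ then $\overline{T}(f)=f/x=T(f)$, while if $f(0)\neq 0$ then $(x+1)f-f_0$ has vanishing constant term, so $T(f)$ is even and $\overline{T}(f)=\frac{(x+1)f-f_0}{x}=T^2(f)$. In both cases $\overline{T}(f)=T^{1+p(f(0))}(f)$, and in the $T$-orbit exactly one zero coordinate is ``created'' immediately after each nonzero one. This is exactly the content of Lemma \ref{lem:essnpresc}, which we shall use freely.

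For part (1) I would induct on $n$, the base case $n=1$ being the one-step observation above. For the inductive step, let $s$ be the number of nonzero coordinates in ${\bf v}^{\overline{T}}_{f,n}$ and $s'$ that in ${\bf v}^{\overline{T}}_{f,n+1}$, so $s'=s+p(\overline{T}^n(f)(0))$. By the induction hypothesis $\overline{T}^n(f)=T^{n+s}(f)$, and since $T^{n+s}(f)(0)=\overline{T}^n(f)(0)$, applying the one-step relation to $g:=\overline{T}^n(f)$ gives $\overline{T}^{n+1}(f)=\overline{T}(g)=T^{1+p(g(0))}(g)=T^{(n+1)+s'}(f)$. The ``moreover'' claim follows by tracking the coordinates appended to ${\bf v}^{T}_{f,\bullet}$: in the case $\overline{T}^n(f)(0)=0$ (so $s'=s$) a single zero coordinate is appended, while in the case $\overline{T}^n(f)(0)\neq 0$ (so $s'=s+1$) a nonzero coordinate followed by its forced zero is appended; in both cases this matches the rule ``insert $0$ after each nonzero coordinate'' applied to the single new entry of ${\bf v}^{\overline{T}}_{f,n+1}$.

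For part (2) I would deduce it from part (1) via a counting argument. The hypothesis ${\bf v}^T_f(n-1)=0$ together with the zero-density of ${\bf v}^T_f$ (equivalently, a nonzero constant term forces the next constant term to vanish) shows that every nonzero coordinate among ${\bf v}^T_f(0),\dots,{\bf v}^T_f(n-1)$ sits at an index $\le n-2$ and is therefore immediately followed, still inside ${\bf v}^T_{f,n}$, by a zero coordinate. Hence deleting from ${\bf v}^T_{f,n}$ the zero following each nonzero coordinate removes exactly $s$ entries and produces a length-$(n-s)$ vector equal to ${\bf v}^{\overline{T}}_{f,n-s}$ with $s$ nonzero coordinates (which also forces $n-s\ge 1$, since the $s$ nonzero positions and their $s$ forced-zero successors are $2s$ distinct indices $<n$). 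Applying part (1) with $n$ replaced by $m:=n-s$ then yields $\overline{T}^m(f)=T^{m+s}(f)=T^n(f)$, and the vector statement is the reverse of the insertion rule, as claimed.

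I expect the only real difficulty to be organizational: keeping the indices straight in the induction of part (1) — in particular that $s$ always counts the first $n$ coordinates, not $n+1$ — and, in part (2), arguing cleanly that the hypothesis ${\bf v}^T_f(n-1)=0$ is precisely what prevents ``cutting a $\overline{T}$-step in half'' so that $n$ $T$-steps correspond to an integral number of $\overline{T}$-steps. There is no analytic content; once the one-step dictionary is in place, both statements are finite combinatorial bookkeeping.
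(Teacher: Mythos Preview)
Your proposal is correct and rests on the same one-step identity $\overline{T}(g)=T^{\,1+p(g(0))}(g)$ that drives the paper's argument. The only organizational difference is that the paper unrolls both compositions directly (writing $\overline{T}^n=\overline{T}_{v_{n-1}}\circ\cdots\circ\overline{T}_{v_0}$ and summing the exponents $1+p(v_i)$ for part (1), and symmetrically grouping $T_0\circ T_{v_i}$ pairs into a single $\overline{T}$ for part (2)), whereas you induct for part (1) and then deduce part (2) from part (1) via the counting observation that the $s$ nonzero coordinates and their $s$ forced zeros are $2s$ distinct indices in $\{0,\dots,n-1\}$. Both routes are equally short; your reduction of (2) to (1) is arguably tidier since it avoids introducing the undefined $T_v$ maps and makes explicit why the hypothesis ${\bf v}^T_f(n-1)=0$ is needed.
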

The following result, which follows immediately from Proposition \ref{prop:prescribconsterm} and Lemma \ref{lem:backforth}, is needed to count $T$-cycles in \S\ref{subs:finirin}. 
\begin{cor}\label{prop:cycfree}
For each cyclically zero dense vector ${\bf v}=(v_0,v_1,\dots,v_{n-1}) \in R^n$ ($n \geq 2$), there exists a unique $f \in \Omega_n$ such that ${\bf v}^T_{f,n}={\bf v}$.
\end{cor}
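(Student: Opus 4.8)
The plan is to transport the statement, via the ``contraction'' of constant-term vectors from Lemma \ref{lem:essnpresc}, to Proposition \ref{prop:prescribconsterm} for the map $\overline{T}$, and then to carry the conclusion back to $T$ using Lemma \ref{lem:backforth}.

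First I would record two elementary facts. If $f \in \Omega_n$, then $T^k(f) \in \Omega_n$ for all $k \geq 0$ (since $T^k$ commutes with $T^n$), and as $T^n$ is the identity on $\Omega_n$, each $T^k$ restricts to a permutation of $\Omega_n$; moreover ${\bf v}^T_f$ is then the infinite repetition of ${\bf v}^T_{f,n}$, while ${\bf v}^T_{T^k(f)}$ is the left shift of ${\bf v}^T_f$ by $k$. Thus replacing $f$ by $T^k(f)$ turns the block ${\bf v}^T_{f,n}$ into its cyclic rotation by $k$; since $T^k$ permutes $\Omega_n$, the existence-and-uniqueness assertion for a given cyclically zero dense block and for any of its cyclic rotations are equivalent. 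As ${\bf v}$ is cyclically zero dense with $n \geq 2$, some coordinate $v_j$ vanishes, and the rotation of ${\bf v}$ beginning at index $j+1$ is again cyclically zero dense and has last entry $0$. Hence it suffices to treat the case $v_{n-1} = 0$.

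So assume $v_{n-1} = 0$ and let $s$ be the number of nonzero coordinates of ${\bf v}$; note $s \leq n-1$, so $n - s \geq 1$. Because ${\bf v}$ is zero dense and its last entry is $0$, it is obtained from a \emph{unique} vector $\bar{\bf v} \in R^{n-s}$ by inserting a $0$ after each nonzero coordinate, in the sense of Lemma \ref{lem:essnpresc}, and $\bar{\bf v}$ still has exactly $s$ nonzero coordinates. For \emph{existence}: Proposition \ref{prop:prescribconsterm} produces $g \in \overline{\Omega}_{n-s}$ with ${\bf v}^{\overline{T}}_{g,n-s} = \bar{\bf v}$; applying Lemma \ref{lem:backforth}(1) with $n$ replaced by $n-s$ gives $\overline{T}^{n-s}(g) = T^{n}(g)$ and ${\bf v}^T_{g,n} = {\bf v}$, whence $T^n(g) = \overline{T}^{n-s}(g) = g$, so $g \in \Omega_n$ is the desired series. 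For \emph{uniqueness}: if $f \in \Omega_n$ satisfies ${\bf v}^T_{f,n} = {\bf v}$, then ${\bf v}^T_f(n-1) = v_{n-1} = 0$, so Lemma \ref{lem:backforth}(2) applies with block length $n$ and yields $f \in \overline{\Omega}_{n-s}$ together with ${\bf v}^{\overline{T}}_{f,n-s} = \bar{\bf v}$; the uniqueness clause of Proposition \ref{prop:prescribconsterm} then forces $f = g$.

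The only slightly delicate points are the reduction to the case $v_{n-1} = 0$ and the remark that contraction and expansion of constant-term vectors are mutually inverse precisely when the block ends in a zero -- this, rather than bare zero density, is what the hypothesis on ${\bf v}$ supplies once the starting index is chosen well -- together with the routine bookkeeping that contraction does not change the number $s$. Everything else is a direct invocation of Proposition \ref{prop:prescribconsterm} and Lemma \ref{lem:backforth}, and I expect no genuine obstacle.
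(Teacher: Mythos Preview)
Your proof is correct and follows exactly the route the paper indicates: the paper gives no proof beyond the sentence ``follows immediately from Proposition~\ref{prop:prescribconsterm} and Lemma~\ref{lem:backforth},'' and you have supplied precisely those details, including the necessary reduction to $v_{n-1}=0$ via cyclic rotation (which Lemma~\ref{lem:backforth}(2) requires and which the paper's ``immediately'' glosses over).
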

The following follows from the proof of \cite[Lemma 3.6]{BP23}.
\begin{lem}\label{lem:fromlem36}
Let $f=u(1+xv)^{-1} \in \mathcal{S}_R$ with $u,v \in R[x]$. Then $T^{k}(f)=u_k(1+x v)^{-1}$ ($k \geq 0$ and $u_k \in R[x]$) where $\deg(u_k)$ is bounded as $k$ increases.
\end{lem}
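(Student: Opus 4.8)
The plan is to invoke \cite[Lemma 3.6]{BP23} essentially verbatim, tracking how the numerator polynomial evolves under one application of $T$ to a fraction written with the \emph{fixed} denominator $1+xv$. Write $f=u(1+xv)^{-1}$ and set $g=1+xv$, so $g_0=1$. I would first verify the inductive claim that each $T$-iterate can indeed be written as $u_k g^{-1}$ with $u_k\in R[x]$: if $T^k(f)=u_k g^{-1}$ and $(u_k)_0\neq 0$, then the constant coefficient of $u_k g^{-1}$ equals $(u_k)_0$ (since $g_0=1$), so $T^{k+1}(f)=(x+1)u_k g^{-1}-(u_k)_0=\bigl((x+1)u_k-(u_k)_0 g\bigr)g^{-1}$, giving $u_{k+1}=(x+1)u_k-(u_k)_0 g$; and if $(u_k)_0=0$, then $u_k$ is divisible by $x$ in $R[x]$ (constant term zero), the constant coefficient of $u_k g^{-1}$ is $0$, and $T^{k+1}(f)=x^{-1}u_k g^{-1}=(u_k/x)g^{-1}$, so $u_{k+1}=u_k/x$. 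In both cases $u_{k+1}\in R[x]$, so the form persists.

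Next I would bound $\deg(u_k)$. Let $d=\deg g\ (=1+\deg v$, or $0$ if $v=0)$ and let $D$ be any integer with $D\ge\max(d,\deg u_0)$; the claim is $\deg(u_k)\le D$ for all $k$. In the "odd" step, $u_{k+1}=(x+1)u_k-(u_k)_0 g$: the leading behaviour is $\deg((x+1)u_k)\le \deg(u_k)+1$, but the top coefficient of $(x+1)u_k$ is the top coefficient of $u_k$, which is cancelled in exactly the situation where $\deg$ would otherwise increase — more precisely, one shows by the same degree bookkeeping as in Lemma \ref{lem:basiresT}(1)--(2) that an odd step can raise the degree by at most $1$ and is then necessarily followed by an even step, which divides by $x$ and lowers it back. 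So I would package this as: $\deg(u_k)\le D$ for even $k$ and $\deg(u_k)\le D+1$ for odd $k$, exactly mirroring part (4) of Lemma \ref{lem:basiresT}, whose proof (referenced to \cite[Lemma 2.7]{BP23}) applies here with $u_k$ in place of $T^k(f)$ because the recursion for $u_k$ is formally identical to the Collatz recursion on $R[x]$ — the denominator $g$ plays the role that "$1$" plays for polynomials and never changes. Hence $\deg(u_k)$ is bounded independently of $k$, which is exactly the assertion.

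The one point requiring a little care — and the main obstacle — is the claim that $(u_k)_0=0$ forces $x\mid u_k$ in $R[x]$ with $u_k/x\in R[x]$: this is immediate, but one must also check this is consistent with the power-series division, i.e. that $T^{k+1}(f)=u_k g^{-1}/x$ really has numerator $u_k/x$ over the same $g$, which holds since $g^{-1}\in R[[x]]$ and multiplication by $x^{-1}$ commutes past $g^{-1}$ when the thing being divided lies in $xR[[x]]$. Given that, the whole argument is bookkeeping, and I would simply write "the recursion $u_{k+1}=(x+1)u_k-(u_k)_0 g$ (odd step), $u_{k+1}=u_k/x$ (even step) is formally the Collatz recursion, so Lemma \ref{lem:basiresT}(4) applied to the sequence $(u_k)$ yields $\deg(u_k)\le \max(\deg u_0,\deg g)+1$ for all $k$," and refer the reader to \cite[Lemma 3.6]{BP23} for the details, as the statement already does.
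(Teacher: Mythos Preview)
Your proposal is correct and matches the paper's approach, which simply defers to \cite[Lemma 3.6]{BP23}. One small imprecision worth fixing: the numerator recursion is \emph{not} formally identical to the polynomial Collatz map---the odd step subtracts $(u_k)_0\,g$, a polynomial of degree $\deg g$, not the constant $(u_k)_0$---so Lemma~\ref{lem:basiresT}(4) does not apply verbatim and an odd step need not ``raise the degree by at most $1$''; but your explicit two-case induction with $D=\max(\deg u_0,\deg g)$ already handles this correctly and yields $\deg u_k\le D+1$.
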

\subsection{Finite rings}\label{subs:finirin} Assume that $R$ is {\bf finite} of cardinality $q$. We obtain the first part of Theorem \ref{main:thm2}:
\begin{thm}
The eventually $T$-periodic series are exactly the series in $\mathcal{S}_R$.
\end{thm}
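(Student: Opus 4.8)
The plan is to prove the asserted equality as two inclusions: first that every series in $\mathcal{S}_R$ is eventually $T$-periodic, and second that every eventually $T$-periodic series belongs to $\mathcal{S}_R$ (recall $\mathcal{S}_R=\{u(1+xv)^{-1}\mid u,v\in R[x]\}$). The first inclusion is the easy one and is where finiteness of $R$ enters. Given $f=u(1+xv)^{-1}\in\mathcal{S}_R$, Lemma \ref{lem:fromlem36} yields $T^k(f)=u_k(1+xv)^{-1}$ with $u_k\in R[x]$ and $\sup_k\deg(u_k)=D<\infty$. Since $1+xv$ is a unit of $R[[x]]$, the map $w\mapsto w(1+xv)^{-1}$ is injective, so $\{T^k(f)\mid k\ge0\}$ injects into the set of polynomials of degree $\le D$, which is finite because $R$ is finite. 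Hence the $T$-orbit of $f$ is finite, so $T^{k_1}(f)=T^{k_2}(f)$ for some $k_1<k_2$ by pigeonhole, i.e.\ $T^{k_1}(f)$ is $T$-periodic and $f$ is eventually $T$-periodic.

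For the reverse inclusion I would first isolate a ``backward step'' for $\overline{T}$: if $\overline{T}(f)\in\mathcal{S}_R$, then $f\in\mathcal{S}_R$. Indeed $\mathcal{S}_R$ is a subring of $R[[x]]$ containing the constants and $(x+1)^{-1}=(1+x\cdot1)^{-1}$; if $f=0$ this is trivial, if $f$ is even then $f=x\,\overline{T}(f)$, and if $f$ is odd then $(x+1)f-f_0=x\,\overline{T}(f)$ gives $f=\bigl(x\,\overline{T}(f)+f_0\bigr)(x+1)^{-1}\in\mathcal{S}_R$. Iterating, $\overline{T}^k(f)\in\mathcal{S}_R$ implies $f\in\mathcal{S}_R$ for every $k\ge0$.

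Now suppose $f$ is eventually $T$-periodic. Since $\overline{T}(g)\in\{T(g),T^2(g)\}$ for every $g\in R[[x]]$, we have $\overline{T}^k(f)=T^{j_k}(f)$ with $j_0=0$ and $j_{k+1}-j_k\in\{1,2\}$, the increment being determined by the parity of $T^{j_k}(f)$. Because $(T^j(f))_{j\ge0}$ is eventually periodic, say with $T^a(f)=T^{a+r}(f)$ for $a\ge M$, the increment depends (for $j_k\ge M$) only on $j_k$ modulo $r$, so the residues $j_k\bmod r$ follow a deterministic map on a finite set and are eventually periodic; hence $(\overline{T}^k(f))_{k\ge0}$ is eventually periodic as well. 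Thus $\overline{T}^k(f)\in\overline{\Omega}_m$ for some $k\ge0$ and $m\ge1$, so $\overline{T}^k(f)\in\mathcal{S}_R$ by Corollary \ref{cor:inclbaromega}, and the backward step gives $f\in\mathcal{S}_R$. (An alternative for this half is to prove $\Omega_n\subseteq\mathcal{S}_R$ directly — via Corollary \ref{prop:cycfree}, or by choosing an odd element $h$ of the $T$-cycle, observing that its $T$-predecessor is necessarily even so that the $\overline{T}$-walk around the cycle cannot skip past $h$ and therefore returns to it, whence $h\in\overline{\Omega}_m\subseteq\mathcal{S}_R$ — and then concluding with the backward step.)

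The substantive ingredients — the boundedness of numerator degrees along $T$-orbits inside $\mathcal{S}_R$ (Lemma \ref{lem:fromlem36}) and the closed-form description placing $\overline{T}$-periodic series in $\mathcal{S}_R$ (Proposition \ref{prop:prescribconsterm} and Corollary \ref{cor:inclbaromega}) — are already available, so no hard estimate remains; the proof is essentially assembly. The points requiring a little care are the backward step in the odd case (the division by $x+1$ is legitimate precisely because $x+1\in\mathcal{I}_R$) and the short combinatorial argument that eventual periodicity of the $T$-orbit forces eventual periodicity of the $\overline{T}$-orbit. If one instead routes through $\Omega_n\subseteq\mathcal{S}_R$, the subtlety worth flagging is that an even $T$-periodic series need not itself be $\overline{T}$-periodic, which is why that route must pass through an odd element of the cycle.
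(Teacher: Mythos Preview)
Your proof is correct and follows essentially the same plan as the paper: the direction $\mathcal{S}_R\Rightarrow$ eventually $T$-periodic is identical (Lemma \ref{lem:fromlem36} plus finiteness of $R$), and for the other direction both arguments reduce to Corollary \ref{cor:inclbaromega} after converting $T$-periodicity into $\overline{T}$-periodicity and using a backward-closure step. The only difference is in that conversion: the paper uses the backward step for $T$ (not $\overline{T}$), reduces to $f$ actually $T$-periodic with ${\bf v}^T_f(n-1)=0$, and then invokes Lemma \ref{lem:backforth}(2) to get $f=T^n(f)=\overline{T}^{\,n-s}(f)\in\overline{\Omega}_{n-s}$ directly; you instead argue combinatorially that eventual $T$-periodicity forces eventual $\overline{T}$-periodicity. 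Your alternative route through an odd element of the cycle is precisely the paper's argument in disguise.
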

\begin{proof}
Let $0 \neq f$ be an eventually $T$-periodic series. Note that $T(f) \in \mathcal{S}_R$ implies $f \in \mathcal{S}_R$. Hence we may assume that $f$ is $T$-periodic. Then there exists $n \geq 1$ such that $T^n(f)=f$. Note that $T^n(T(f))=T(f)$ and, if ${\bf v}^T_{f}(n-1)\neq 0$ then ${\bf v}^T_{T(f)}(n-1)={\bf v}^T_{f}(n)=0$. Hence, up to replacing $f$ by $T(f)$, we may assume ${\bf v}^T_{f}(n-1)=0$. In this case, by Lemma \ref{lem:backforth}, there exists $0 \leq s \leq n $ such that $f=T^n(f)=\overline{T}^{n-s}(f)$, and so $f \in \overline{\Omega}_{n-s}$. By Corollary \ref{cor:inclbaromega}, we get $f \in \mathcal{S}_R$.

Conversely, let $f \in \mathcal{S}_R$. Since $R$ is finite, by Lemma \ref{lem:fromlem36}, the orbit of $f$ is finite, that is, $f$ is eventually $T$-periodic.
\end{proof}

For $n \geq 1$, by Lemma \ref{lem:essnpresc} and Corollary \ref{prop:cycfree}, we can identify $\Omega_n$ with the set $\mathcal{F}_n$ of all cyclically zero dense vectors of $R^n$, and so $\Omega_n$ is finite. We shall now count the number of $T$-cycles of a given length. For $n \geq 1$, let $i_n=|\Omega^*(n)|$ and $j_n=|\Omega(n)|$. For $n \geq 1$, we have the disjoint union $\Omega(n)=\bigsqcup_{d|n} \Omega^*(d)$, and so we obtain $j_n=\sum_{d|n}i_n$. Hence, for all $n \geq 1$, the M\"{o}bius inversion formula gives
\begin{equation}\label{eq:formIn}
i_n=\sum_{d|n}\mu(d)j_{\frac{n}{d}},
\end{equation} where $\mu$ denotes the M\"{o}bius function.

Let $n \geq 2$. Let $E_n$ be the set of all zero dense vectors in $R^n$ and let $e_n=|E_n|$. Notice that $\mathcal{F}_n \subset E_n$. For $(v_0,\dots,v_{n-1}) \in R^n$, we have $(v_0,\dots,v_{n-1}) \in \mathcal{F}_n$ if and only if $v_{n-1}=0$ and $(v_0,v_{1},\dots,v_{n-2}) \in E_{n-1}$, or if $v_{n-1} \neq 0$, $v_{0}=v_{n-2}=0$ and $(v_{1},\dots,v_{n-3}) \in E_{n-3}$. Hence for $n \geq 3$, we have 
\begin{equation}\label{eq:recformjnen}
 j_n=|\mathcal{F}_n|=e_{n-1}+(q-1)e_{n-3}.   
\end{equation} Moreover we have $j_1=1$ (because $\Omega_1=\{0\}$) and $j_2=2q-1$ (because $\mathcal{F}_2=\{(0,0)\} \cup \{(0,a),(a,0) \mid a \neq 0\}$). We now compute $e_n=|E(n)|$.
\begin{lem}\label{lem:formjn}
Let $b=4q-3$, $\alpha=\frac{1+\sqrt{b}}{2}$ and $\beta=\frac{1-\sqrt{b}}{2}$. Then
\begin{equation}\label{eq:explformome}
e_n=\frac{\alpha^{n+2}-\beta^{n+2}}{\sqrt{b}},
\end{equation} for all $n \geq 1$.
\end{lem}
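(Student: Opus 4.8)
The plan is to show that $e_n$ satisfies the second-order linear recurrence $e_n = e_{n-1} + (q-1)e_{n-2}$ with explicit initial values, and then to recognize $\alpha$ and $\beta$ as the roots of the associated characteristic polynomial, so that the closed form follows from a short induction.

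First I would establish the recurrence by conditioning on the last coordinate. Fix $n \geq 3$ and split $E_n$ according to the value of $v_{n-1}$. If $v_{n-1} = 0$, then $(v_0,\dots,v_{n-1})$ is zero dense if and only if $(v_0,\dots,v_{n-2})$ is, which accounts for exactly $e_{n-1}$ vectors. If $v_{n-1} \neq 0$, then zero density forces $v_{n-2} = 0$, and then the remaining constraints say precisely that $(v_0,\dots,v_{n-3})$ is zero dense; this accounts for $(q-1)e_{n-2}$ vectors. Hence $e_n = e_{n-1} + (q-1)e_{n-2}$ for all $n \geq 3$. For the base cases, every length-one vector is vacuously zero dense, so $e_1 = |R| = q$, and $E_2$ consists of all pairs that are not both nonzero, so $e_2 = q^2 - (q-1)^2 = 2q-1$.

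Next I would observe that $\alpha, \beta = \tfrac{1 \pm \sqrt{b}}{2}$ are exactly the two roots of $t^2 - t - (q-1) = 0$, since its discriminant is $1 + 4(q-1) = 4q-3 = b$. Thus $\alpha^2 = \alpha + (q-1)$ and $\beta^2 = \beta + (q-1)$; multiplying by $\alpha^n$ and $\beta^n$ shows that both $(\alpha^{n+2})_n$ and $(\beta^{n+2})_n$, hence also $g_n := (\alpha^{n+2} - \beta^{n+2})/\sqrt{b}$, satisfy the same recurrence $g_n = g_{n-1} + (q-1)g_{n-2}$. It then remains only to check that $g_1 = q$ and $g_2 = 2q-1$. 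Using $\alpha+\beta = 1$, $\alpha - \beta = \sqrt{b}$ and $\alpha\beta = -(q-1)$, one gets $\alpha^2+\beta^2 = (\alpha+\beta)^2 - 2\alpha\beta = 2q-1$, whence $\alpha^3 - \beta^3 = (\alpha-\beta)(\alpha^2+\alpha\beta+\beta^2) = \sqrt{b}\,q$ and $\alpha^4 - \beta^4 = (\alpha^2-\beta^2)(\alpha^2+\beta^2) = \sqrt{b}\,(2q-1)$, so that $g_1 = q$ and $g_2 = 2q-1$. Since $e_n$ and $g_n$ obey the same recurrence for $n \geq 3$ and agree at $n = 1, 2$, induction yields $e_n = g_n$ for all $n \geq 1$, which is the assertion.

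I do not expect a genuine obstacle here; the only points needing care are the combinatorial bookkeeping in the case split — in particular verifying that the decomposition is valid down to $n = 3$ and that the boundary values $e_1, e_2$ are as claimed — together with the elementary algebraic identities linking $\alpha, \beta$ to $q$ and $b$. Both are routine.
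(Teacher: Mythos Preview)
Your proposal is correct and follows essentially the same route as the paper: derive the linear recurrence $e_n=e_{n-1}+(q-1)e_{n-2}$ by a case split on one coordinate, identify $\alpha,\beta$ as the roots of the characteristic polynomial $t^2-t-(q-1)$, and match initial values. The only cosmetic differences are that the paper conditions on the first coordinate rather than the last, and solves for the coefficients $A,B$ in $A\alpha^n+B\beta^n$ before simplifying via $\alpha^2=q-\beta$, $\beta^2=q-\alpha$, whereas you verify $g_1=q$ and $g_2=2q-1$ directly; neither change affects the substance of the argument.
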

\begin{proof}
Consider the following Fibonacci-like sequence:
\begin{equation}\label{eq:recomeg}
\begin{cases}
f_0=1,\\
f_1=q,\\
f_n=f_{n-1}+(q-1)f_{n-2},\,\, n \geq 2.
\end{cases}
\end{equation}
Note that the characteristic polynomial of the linear recurrence \eqref{eq:recomeg} is $X^2-X-(q-1)$. Since its roots are $\alpha$ and $\beta$, we have
$$
f_n=A\alpha^n+B\beta^n\,\,(n \geq 0),
$$ for some $A,B \in \mathbb{C}$. Using the initial conditions 
\begin{align*}
A+B=f_0=1\\ 
A\alpha + B \beta =f_1=q,
\end{align*} we get
$A=\dfrac{q-\beta}{\sqrt{b}}$ and $B=\dfrac{\alpha-q}{\sqrt{b}}$. Therefore we have
\begin{equation}
f_n=\frac{(q-\beta)\alpha^n-(q-\alpha)\beta^n}{\sqrt{b}}\,\,(n \geq 0).
\end{equation}
Since $\alpha^2=q-\beta$ and $\beta^2=q-\alpha$, we obtain
\begin{equation*}
f_n=\frac{\alpha^{n+2}-\beta^{n+2}}{\sqrt{b}}\,\,(n \geq 0).
\end{equation*} 

To complete the proof of the lemma, it remains to prove that $(e_n)_{n \geq 1}$ also satisfies \eqref{eq:recomeg}.
For $n=1,2$, it follows from $E_1=R$ and $E_2=\{(0,0)\} \cup \{(0,a), (a,0) \mid a \in R \setminus\{0\} \}$. Let $n \geq 2$. Observe that $(v_0,\dots,v_{n-1}) \in E_n$ if and only if ($v_{0}=0$ and $(v_1,\dots,v_{n-1}) \in \Omega_{n-1}$) or ($v_{0} \neq 0$, $v_1=0$ and $(v_{2},\dots,v_{n-1}) \in \Omega_{n-2}$). This shows that $(e_n)_{n \geq 1}$ satisfies \eqref{eq:recomeg}.
\end{proof}

The following is the second part of Theorem \ref{main:thm2}: 
\begin{prop}\label{prop:resprin}
With the same notation as in Lemma \ref{lem:formjn}, for any $n \geq 3$, the number of $T$-cycles of length $n$ is:
\begin{equation}\label{eq:Zn53}
Z_n=\frac{1}{n}\sum_{d|n}\mu(d)\left(\alpha^{\frac{n}{d}}+\beta^{\frac{n}{d}}\right)
\end{equation}
\end{prop}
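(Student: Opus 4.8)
The plan is to reduce the count to a closed form for $j_n=|\Omega(n)|$ and then invert.

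\emph{Step 1: a closed form for $j_n$.} For $n\geq 3$ I would substitute the formula $e_m=(\alpha^{m+2}-\beta^{m+2})/\sqrt{b}$ of Lemma~\ref{lem:formjn} into the recursion \eqref{eq:recformjnen}, $j_n=e_{n-1}+(q-1)e_{n-3}$, reading $e_0$ as $1$ (the empty vector is vacuously zero dense, which agrees with the displayed formula). This gives
\[
j_n=\frac{\bigl(\alpha^{n+1}+(q-1)\alpha^{n-1}\bigr)-\bigl(\beta^{n+1}+(q-1)\beta^{n-1}\bigr)}{\sqrt{b}}.
\]
The elementary identities $\alpha-\beta=\sqrt{b}$ and $\alpha\beta=1-q$ (immediate from $\alpha,\beta=\tfrac{1\pm\sqrt{b}}{2}$ and $b=4q-3$) turn $\alpha^{n+1}+(q-1)\alpha^{n-1}=\alpha^{n-1}(\alpha^2-\alpha\beta)$ into $\alpha^{n}\sqrt{b}$, and symmetrically $\beta^{n+1}+(q-1)\beta^{n-1}=-\beta^{n}\sqrt{b}$; hence $j_n=\alpha^n+\beta^n$. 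I would also record the boundary cases directly from the values stated before \eqref{eq:recformjnen}: $j_1=1=\alpha+\beta$ and $j_2=2q-1=(\alpha+\beta)^2-2\alpha\beta=\alpha^2+\beta^2$. So $j_m=\alpha^m+\beta^m$ for every $m\geq 1$.

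\emph{Step 2: Möbius inversion and passage to $Z_n$.} Inserting $j_{n/d}=\alpha^{n/d}+\beta^{n/d}$ into \eqref{eq:formIn} gives $i_n=\sum_{d\mid n}\mu(d)\bigl(\alpha^{n/d}+\beta^{n/d}\bigr)$, where $i_n=|\Omega^*(n)|$. A $T$-cycle of length $n$ is the orbit of a series of exact $T$-period $n$, every element of such an orbit again has exact $T$-period $n$, and two such orbits coincide or are disjoint; hence $\Omega^*(n)$ is a union of $Z_n$ orbits of size $n$, so $i_n=nZ_n$, which is precisely \eqref{eq:Zn53}.

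\emph{Step 3: asymptotics.} Since $q\geq 2$, $\alpha>1$ and $|\beta|=\tfrac{\sqrt{b}-1}{2}<\alpha$, so $|\beta/\alpha|^n\to 0$; moreover for each divisor $d\geq 2$ of $n$ both $|\alpha^{n/d}|$ and $|\beta^{n/d}|$ are at most $\alpha^{n/2}$, and the number of divisors of $n$ times $\alpha^{n/2}$ is $o(\alpha^n)$. Therefore the $d=1$ term $\alpha^n$ dominates, $i_n=\alpha^n(1+o(1))$, and $Z_n=i_n/n\sim\alpha^n/n$. I do not expect a real obstacle here: the only actual computation is the identity $j_n=\alpha^n+\beta^n$ in Step~1, and the only points needing mild care are the small-index boundary cases (and the convention $e_0=1$) entering the Möbius sum, together with the observation that every element of a length-$n$ cycle has exact period $n$, which legitimises $i_n=nZ_n$.
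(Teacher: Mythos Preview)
Your proof is correct and follows essentially the same route as the paper: both substitute the formula for $e_m$ from Lemma~\ref{lem:formjn} into $j_n=e_{n-1}+(q-1)e_{n-3}$, simplify via the identities $\alpha^2+q-1=\alpha\sqrt{b}$ and $\beta^2+q-1=-\beta\sqrt{b}$ (equivalently, your $\alpha\beta=1-q$ and $\alpha-\beta=\sqrt{b}$), and then apply M\"obius inversion together with $i_n=nZ_n$. The only organisational difference is that you first record the closed form $j_m=\alpha^m+\beta^m$ for \emph{all} $m\geq 1$ (checking $m=1,2$ directly) before inverting, which handles the small divisors $n/d\in\{1,2\}$ more transparently than the paper's one-line computation; your Step~3 on asymptotics is the content of Corollary~\ref{cor:asym} rather than the proposition itself, but is correct.
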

\begin{proof}
Combining \eqref{eq:formIn} and Equation \eqref{eq:recformjnen}, for $n \geq 3$, we have
$$
i_n=\sum_{d \mid n}\mu(d)j_{\frac{n}{d}}=\sum_{d \mid n}\mu(d)\left(e_{\frac{n}{d}-1}+(q-1)e_{\frac{n}{d}-3}\right)
$$
A straightforward computation using Lemma \ref{lem:formjn}, $\alpha^2+q-1=\alpha\sqrt{b}$ and $\beta^2+q-1=-\beta\sqrt{b}$ leads to: 
$$
Z_n=\frac{i_n}{n}=\frac{1}{n}\sum_{d|n}\mu(d)\left(\alpha^{\frac{n}{d}}+\beta^{\frac{n}{d}}\right).
$$
\end{proof}
\begin{cor}\label{cor:asym}
With the same notation as in Lemma \ref{lem:formjn}, we have
$$
Z_n \sim \frac{\alpha^n}{n},
$$ as $n \rightarrow  \infty$.
\end{cor}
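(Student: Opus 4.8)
The plan is to read the asymptotic directly off the closed form \eqref{eq:Zn53} for $Z_n$ proved in Proposition \ref{prop:resprin}, by isolating its dominant term. First I would record two elementary facts about the roots $\alpha=\frac{1+\sqrt b}{2}$ and $\beta=\frac{1-\sqrt b}{2}$ of $X^2-X-(q-1)$. Since $R$ is a nonzero ring we have $q\ge2$, hence $b=4q-3\ge5$ and therefore $\alpha\ge\frac{1+\sqrt5}{2}>1$. Moreover $\beta<0$ with $|\beta|=\frac{\sqrt b-1}{2}<\frac{\sqrt b+1}{2}=\alpha$, so $\rho:=|\beta|/\alpha\in(0,1)$; thus $\alpha$ strictly dominates $|\beta|$, which is the only structural input needed.

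Next I would split the M\"obius sum by singling out $d=1$:
\begin{equation*}
nZ_n=\bigl(\alpha^n+\beta^n\bigr)+\sum_{\substack{d\mid n\\ d>1}}\mu(d)\bigl(\alpha^{n/d}+\beta^{n/d}\bigr),
\end{equation*}
and bound the contribution of $\beta^n$ and of the tail sum. For a divisor $d>1$ of $n$ we have $n/d\le n/2$, hence $\alpha^{n/d}\le\alpha^{n/2}$, and also $|\beta|^{n/d}\le\max\bigl(1,|\beta|^{n/2}\bigr)\le\alpha^{n/2}$ (treating $|\beta|\le1$ and $|\beta|>1$ separately; in the second case $|\beta|^{n/2}\le\alpha^{n/2}$). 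Since $n$ has at most $n$ divisors, the tail sum is at most $2n\,\alpha^{n/2}$ in absolute value, while $|\beta^n|=\rho^n\alpha^n$. Hence
\begin{equation*}
\left|\frac{nZ_n}{\alpha^n}-1\right|\le\rho^n+\frac{2n}{\alpha^{n/2}}.
\end{equation*}

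Finally, since $0<\rho<1$ and $\alpha>1$, both terms on the right-hand side tend to $0$ as $n\to\infty$, so $nZ_n/\alpha^n\to1$, which is precisely the claim $Z_n\sim\alpha^n/n$. I do not anticipate a genuine obstacle: the arithmetic content is all in Proposition \ref{prop:resprin}, and what remains is the routine principle that a divisor sum $\sum_{d\mid n}\mu(d)(\alpha^{n/d}+\beta^{n/d})$ is governed by its $d=1$ term once the generating root $\alpha$ exceeds the modulus of every competitor. The only mild bookkeeping is that for $q\ge3$ one has $|\beta|\ge1$, which is harmlessly absorbed by the crude estimate $|\beta|^{n/d}\le\alpha^{n/2}$.
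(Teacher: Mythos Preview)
Your argument is correct and is essentially the standard one: isolate the $d=1$ term in the M\"obius sum \eqref{eq:Zn53}, and bound everything else by $O(n\alpha^{n/2})$ using that $\alpha>1$ dominates $|\beta|$ and that $n/d\le n/2$ for $d\ge2$. The paper states the corollary without proof (treating it as immediate from Proposition~\ref{prop:resprin}); your write-up makes the implicit estimate explicit and handles the case split $|\beta|\le1$ versus $|\beta|>1$ cleanly.
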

\section{The Collatz map in $\mathbb{Z}_2$}\label{sec:z2}
In this section, we prove Theorem \ref{main_z2}. As mentioned in the introduction, Lagarias proved in \cite{Lag90} that, under the ``condensed" Collatz map:
\begin{align*}
\overline{T} \colon \mathbb{Z}_2& \rightarrow \mathbb{Z}_2\\
         f        & \mapsto
         \begin{cases}
         \frac{f}{2} & \textrm{if}\,\, f \equiv 0 \pmod{2},\\
         \frac{3f+1}{2} & \textrm{otherwise},
         \end{cases}
\end{align*}
the number of cycles of length $n \geq 1$ is 
$$
I(n)=\frac{1}{n}\sum_{d|n} \mu(d)2^{\frac{n}{d}}.
$$
In this section, using the same approach as in \S\ref{sec:power}, we determine the number of cycles of length $n \geq 1$ for the classical Collatz map:
\begin{align*}
T \colon \mathbb{Z}_2& \rightarrow \mathbb{Z}_2\\
         f        & \mapsto
         \begin{cases}
         \frac{f}{2} & \textrm{if}\,\, f \equiv 0 \pmod{2},\\
          3f+1 & \textrm{otherwise}.
         \end{cases}
\end{align*}
We shall use similar notation to that of \S\ref{sec:power}: For any $v \in \{0,1\}$, let 
\begin{align}
\overline{T}_v \colon \mathbb{Q}_2& \rightarrow \mathbb{Q}_2\\
         f        & \mapsto
         \frac{3^{v}f+v}{2}.
\end{align}
(where $\mathbb{Q}_2$ denotes the field of $2$-adic numbers -- the quotient field of $\Z_2$). For any $f \in \mathbb{Z}_2$, let $f_0 \in \{0,1\}$ be such that $f \equiv f_0 \pmod{2}$. For any $f \in \mathbb{Z}_2$, we define
$${\bf v}^T_{f}=\left({\bf v}^T_f(0),{\bf v}_f^T(1),{\bf v}^T_f(2),\dots\right)=\left(T^0(f)_0,T^1(f)_0,T^2(f)_0,\dots \right) \in \{0,1\}^{\mathbb{N}}$$ (resp., $${\bf v}^{\T}_f=({\bf v}^{\T}_f(0),{\bf v}^{\T}_f(1),{\bf v}^{\T}_f(2),\dots)=\left(\overline{T}^0(f)_0,\overline{T}^1(f)_0,\overline{T}^2(f)_0,\dots\right) \in \{0,1\}^{\mathbb{N}}.)$$ Also, for $n \geq 1$, let ${\bf v}^T_{f,n}=\left({\bf v}^T_f(0),\dots, {\bf v}^T_f(n-1)\right) \in \{0,1\}^n$ and ${\bf v}^{\T}_{f,n}=\left({\bf v}^{\T}_f(0),\dots, {\bf v}^{\T}_f(n-1)\right) \in \{0,1\}^n$. For $n \geq 1$, let 
$$
\Omega_n =\left\lbrace f \in \mathbb{Z}_2 \mid T^n(f)=f \right \rbrace,
$$
$$
\overline{\Omega}_n =\left\lbrace f \in \mathbb{Z}_2 \mid \overline{T}^n(f)=f \right \rbrace,
$$
and
$$
\Omega^*_n= \left\lbrace f \in \mathbb{Z}_2 \mid f\,\, \textrm{is $T$-periodic of $T$-period}\,\,n  \right\rbrace.
$$
The following result, an analogue of Proposition \ref{prop:prescribconsterm}, was proved in \cite[Theorem 2.1.]{Lag90}.
\begin{prop}\label{prop:z2period}
Let ${\bf v}=(v_0,v_1,\dots,v_{n-1}) \in \{0,1\}^n$ ($n \geq 1$). Then there exists a unique  $f \in \overline{\Omega}_n$ such that ${\bf v}^{\T}_{f,n}={\bf v}$, given by:
$$
f=\frac{\sum_{j=0}^{n-1}v_j2^{j}3^{\sum_{k=j+1}^{n-1}v_{k}}}{2^{n}-3^{\sum_{k=0}^{n-1}v_k}}.
$$
\end{prop}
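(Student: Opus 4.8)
The plan is to mimic, in the $2$-adic setting, the argument used for Proposition~\ref{prop:prescribconsterm}. The key algebraic input is the analogue of Lemma~\ref{lem:fundiden}: for any ${\bf v}=(v_0,\dots,v_{n-1})\in\{0,1\}^n$ and any $f\in\mathbb{Q}_2$, iterating the definition of $\overline{T}_v$ gives
\begin{equation*}
\overline{T}_{\bf v}(f)=\frac{1}{2^n}\left(3^{\sum_{k=0}^{n-1}v_k}f-\sum_{j=0}^{n-1}v_j2^{j}3^{\sum_{k=j+1}^{n-1}v_k}\right),
\end{equation*}
where $\overline{T}_{\bf v}=\overline{T}_{v_{n-1}}\circ\cdots\circ\overline{T}_{v_0}$. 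This is proved by a routine induction on $n$, exactly as in Lemma~\ref{lem:fundiden}, with $x$ replaced by $2$ and $(x+1)$ replaced by $3$; the only point to check is that $3^{v}f+v$ is the right single-step formula so that the $p(v)$-exponent becomes simply $v$ (legitimate since $v\in\{0,1\}$).

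Next I would establish uniqueness. If $f\in\overline{\Omega}_n$ has ${\bf v}^{\overline T}_{f,n}={\bf v}$, then $\overline{T}^n(f)=\overline{T}_{\bf v}(f)=f$, and plugging the displayed formula above into $\overline{T}_{\bf v}(f)=f$ and solving the linear equation for $f$ forces
\begin{equation*}
f=\frac{\sum_{j=0}^{n-1}v_j2^{j}3^{\sum_{k=j+1}^{n-1}v_k}}{2^{n}-3^{\sum_{k=0}^{n-1}v_k}}.
\end{equation*}
Here one must note $2^n\neq 3^{\sum v_k}$, which holds because $\sum_{k}v_k\le n$ and $2^n>3^{n}$ fails — more carefully, $3^m=2^n$ is impossible for $m\le n$ unless $m=n=0$, and $n\ge 1$; so the denominator is a nonzero integer and the quotient is a well-defined element of $\mathbb{Q}_2$. (In fact $|3^{\sum v_k}|_2=1$ while $|2^n|_2<1$, so the denominator is a $2$-adic unit, hence $f\in\mathbb{Z}_2$, giving the stronger conclusion that the candidate lies in $\mathbb{Z}_2$.)

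For existence I would take $g$ to be the right-hand side of the displayed formula and verify two things: that $\overline{T}^n(g)=g$ and that ${\bf v}^{\overline T}_{g,n}={\bf v}$. The first is immediate by reversing the algebra above, since $g$ was constructed to satisfy $\overline{T}_{\bf v}(g)=g$. For the second I would argue by induction on $\ell$ that $\overline{T}^\ell(g)_0=v_\ell$ for $0\le\ell\le n-1$: the base case $\ell=0$ is the observation that $g\equiv v_0\pmod 2$ (clear since the denominator is a $2$-adic unit and the numerator is $v_0+2(\cdots)$), and the inductive step uses the partial-composition formula $\overline{T}^{\ell+1}(g)=\overline{T}_{(v_0,\dots,v_\ell)}(g)$ together with the displayed identity for length $\ell+1$ to extract the residue mod $2$, which comes out to $v_{\ell+1}$ because the remaining factor $2^{n-\ell-1}(\cdots)$ contributes $0$ mod $2$ as $n-\ell-1\ge 1$. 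I do not expect a genuine obstacle here: the whole argument is the $2$-adic transcription of the proof of Proposition~\ref{prop:prescribconsterm}, and indeed this is exactly \cite[Theorem~2.1]{Lag90}, so the ``proof'' can reasonably be given as a one-line citation with a remark that the computation is identical to that of Proposition~\ref{prop:prescribconsterm}. The only mild care needed is bookkeeping of the exponents of $3$ (which replace the $p(v_k)$-exponents of $(x+1)$) and confirming the denominator never vanishes.
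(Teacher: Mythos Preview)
Your approach is exactly what the paper does: it simply cites \cite[Theorem~2.1]{Lag90} and notes that the argument is the $2$-adic transcription of the proof of Proposition~\ref{prop:prescribconsterm}. One small correction to your bookkeeping: since here $\overline{T}_v(f)=\frac{3^vf+v}{2}$ (with a plus sign, unlike the minus in the $R[[x]]$ map), the iterated identity should read
\[
\overline{T}_{\bf v}(f)=\frac{1}{2^n}\Bigl(3^{\sum_{k=0}^{n-1}v_k}f+\sum_{j=0}^{n-1}v_j2^{j}3^{\sum_{k=j+1}^{n-1}v_k}\Bigr);
\]
with this sign, solving $\overline{T}_{\bf v}(f)=f$ yields the stated formula, and the rest of your argument (including the mod~$2$ inductive check, where the sign is in any case irrelevant) goes through unchanged.
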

\begin{rmk}
Regarding Proposition \ref{prop:z2period}, although the statement of \cite[Theorem 2.1.]{Lag90} concerns $\mathbb{Q} \cap \mathbb{Z}_2$, we may replace $\mathbb{Q} \cap \mathbb{Z}_2$ by $\mathbb{Z}_{2}$ without any change in the proof. (We note that there is a minor difference in terminology between here and \cite{Lag90} -- there an element $f$ is called ``periodic of period $n$" if $\overline{T}^n(f) = f$, and $n$ need not be minimal with this property.)
\end{rmk}
Concerning eventually $\overline{T}$-periodic elements of $\mathbb{Z}_2$, it is known that they are all rational (see \cite{Aki04,Roz19}). Lagarias conjectured that the converse also holds (see \cite{Lagarias2010}).
\begin{conj}[Periodicity Conjecture, Lagarias]
The eventually $\overline{T}$-periodic elements of $\mathbb{Z}_2$ are exactly the elements of $\mathbb{Q} \cap \mathbb{Z}_2$.
\end{conj}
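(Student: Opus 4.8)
The plan is to treat the two inclusions of the claimed equality separately. The inclusion ``eventually $\overline{T}$-periodic $\Rightarrow$ rational'' is already available in the literature (\cite{Aki04,Roz19}), so all the content lies in the converse: that every $f \in \mathbb{Q} \cap \mathbb{Z}_2$ is eventually $\overline{T}$-periodic. First I would fix such an $f$ and write it in lowest terms as $f = a/b$ with $b \geq 1$ odd, the denominator being forced to be odd precisely because $f \in \mathbb{Z}_2$. The first step is the observation that $\overline{T}$ preserves the set of rationals whose denominator divides $b$: if $a$ is even then $\overline{T}(f) = (a/2)/b$, while if $a$ is odd then $3a+b$ is even (odd plus odd) and $\overline{T}(f) = \big((3a+b)/2\big)/b$. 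In either case the denominator does not grow, so the whole forward orbit $\{\overline{T}^k(f) : k \geq 0\}$ lies in the discrete set $\tfrac{1}{b}\mathbb{Z}$.

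Given this, the second step is a reduction of eventual periodicity to a boundedness statement. Since $\overline{T}$ is single-valued and the orbit is contained in the discrete lattice $\tfrac{1}{b}\mathbb{Z}$, the orbit is eventually periodic if and only if it is a finite set, and within $\tfrac{1}{b}\mathbb{Z}$ finiteness is equivalent to boundedness in the archimedean absolute value. Thus the converse inclusion is equivalent to the assertion that $\sup_{k \geq 0} |\overline{T}^k(f)|$ is finite for every $f \in \mathbb{Q} \cap \mathbb{Z}_2$. To pin down the periodic points that a bounded orbit must eventually enter, I would invoke Proposition \ref{prop:z2period}, which exhibits every period-$n$ point explicitly as a rational with numerator and denominator controlled by $n$; this shows that, for each bound on the denominator, the candidate limit cycles form a well-understood finite family.

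The hard part will be precisely this boundedness step, and here I expect the approach to stall. Proving that no rational orbit of $\overline{T}$ diverges is exactly the open Periodicity Conjecture and cannot be extracted from the elementary reduction above. Indeed, specializing to $b=1$ with $a$ a positive integer already contains the classical Collatz problem -- ruling out divergent trajectories is its ``no escape to infinity'' half -- so any complete argument would resolve far more than is presently known. Partial techniques, such as the logarithmic-density estimates of Tao \cite{Tao2022}, which control almost all integer orbits, or $2$-adic ergodic and measure-theoretic arguments, give strong statistical information but fall short of the uniform statement required for \emph{every} rational. I therefore regard the first two steps as a genuine and provable reduction, while the boundedness of all rational orbits is the essential obstacle; it is for this reason that the statement is recorded here as a conjecture rather than a theorem.
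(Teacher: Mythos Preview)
Your analysis is correct, and there is nothing to compare it against: the paper does not prove this statement. It is recorded there precisely as a conjecture, with the remark that one inclusion (eventually $\overline{T}$-periodic $\Rightarrow$ rational) is known from \cite{Aki04,Roz19}, while the converse is open. Your writeup matches this: you recover the known direction, give the standard reduction of the unknown direction to archimedean boundedness of rational orbits, and correctly observe that this boundedness already contains the divergence-free half of the classical Collatz problem (take $b=1$), so no proof is to be expected with current tools.

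Two minor comments on the reduction itself. First, your invocation of Proposition~\ref{prop:z2period} to ``pin down the periodic points that a bounded orbit must eventually enter'' is unnecessary: once the orbit is a finite subset of $\tfrac{1}{b}\mathbb{Z}$, eventual periodicity is immediate from the pigeonhole principle, with no need to enumerate the possible cycles. Second, the case $b=1$ with $a>0$ contains more than just the ``no escape to infinity'' half of Collatz --- ruling out nontrivial positive integer cycles is also part of boundedness-plus-eventual-periodicity only insofar as it forces the orbit into \emph{some} cycle, not necessarily $(1,2)$; but your point that the full Collatz conjecture's divergence half is already embedded here is accurate and is the essential obstruction.
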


\begin{defn}
A vector $(v_0,v_1,\dots) \in \{0,1\}^{\mathbb{N}}$ is called {\it zero dense} if it contains no consecutive $1$'s; in other words, for all $i \geq 0$, we have $v_i=0$ or $v_{i+1}=0$. A vector $(v_0,v_1,\dots,v_{n-1}) \in \{0,1\}^n$ ($n \geq 1$) is called {\it cyclically zero dense} if the infinite concatenation 
$$
(v_0,v_1,\dots,v_{n-1}, v_0,v_1,\dots,v_{n-1}, \dots, v_0,v_1,\dots,v_{n-1},\dots) \in \{0,1\}^{\mathbb{N}}
$$ is zero dense in the preceding sense.
\end{defn}
\begin{rmk}
In symbolic dynamics, the set of zero dense sequences is called the \emph{golden mean} shift space (see \cite{LM95}). 
\end{rmk}
The proofs of Lemma \ref{lem:denZ2}, Lemma \ref{lem:fromUtoTZ2} and Proposition \ref{prop:cycfreeZ2} below are exactly the same as the proofs of Lemma \ref{lem:essnpresc}, Lemma \ref{lem:backforth} and Corollary \ref{prop:cycfree} respectively.
\begin{lem}\label{lem:denZ2}
Let $f \in \mathbb{Z}_2$. Then ${\bf v}^T_{f}$ is obtained from ${\bf v}^{\T}_f$ by inserting $0$ after each $1$. Conversely ${\bf v}^{\T}_f$ is obtained from ${\bf v}^T_{f}$ by removing the zero coordinate following any $1$. In particular, ${\bf v}^T_f$ is zero dense. 
\end{lem}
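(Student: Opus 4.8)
The plan is to compare the two dynamical systems step by step, using the elementary identity relating $T$ and $\overline{T}$ on $\mathbb{Z}_2$: if $g$ is even then $T(g)=g/2=\overline{T}(g)$, while if $g$ is odd then $3g+1$ is even, so $T(g)=3g+1$ has parity $0$ and $T^2(g)=(3g+1)/2=\overline{T}(g)$. Thus a single $\overline{T}$-step is either one $T$-step (when the current term is even) or two $T$-steps (when the current term is odd), and in the latter case the intermediate $T$-iterate is even, i.e.\ it contributes a coordinate $0$ to ${\bf v}^T_f$ immediately after the coordinate $1$ contributed by the odd term.

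To make this precise I would argue by induction on $n$: letting $s_n$ be the number of $1$'s in ${\bf v}^{\T}_{f,n}$, I claim that $\overline{T}^n(f)=T^{n+s_n}(f)$ and that the initial segment ${\bf v}^T_{f,n+s_n}$ is exactly the word obtained from ${\bf v}^{\T}_{f,n}$ by inserting a $0$ after each $1$. The base case $n=0$ is the empty word. For the inductive step, put $g=\overline{T}^n(f)=T^{n+s_n}(f)$: if $g$ is even, one appends a single coordinate $0$ to both words and $s_{n+1}=s_n$; if $g$ is odd, one appends the two coordinates $1,0$ to the $T$-word (since $g$ has parity $1$ and $T(g)=3g+1$ has parity $0$) and the single coordinate $1$ to the $\overline{T}$-word, with $s_{n+1}=s_n+1$, so the index identity advances by two $T$-steps. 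Letting $n\to\infty$ yields the first assertion about the infinite sequences.

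For the converse and the ``in particular'' clause, I would first observe that ${\bf v}^T_f$ is \emph{zero dense}: if ${\bf v}^T_f(i)=1$ then $T^i(f)$ is odd, hence $T^{i+1}(f)=3T^i(f)+1$ is even and ${\bf v}^T_f(i+1)=0$, so no two consecutive coordinates of ${\bf v}^T_f$ equal $1$. Consequently, the coordinate following each $1$ in ${\bf v}^T_f$ is a well-defined $0$, and deleting all of these coordinates is precisely the operation inverse to ``insert a $0$ after each $1$''; by the first part this recovers ${\bf v}^{\T}_f$.

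This is word-for-word the argument proving Lemma \ref{lem:essnpresc} in the power-series setting, with ``$1$'' in place of ``nonzero coordinate'', so no genuine obstacle arises; the only point requiring mild care is the index bookkeeping $m=n+s_n$ in the inductive step, which is also the content of the (later) Lemma \ref{lem:fromUtoTZ2}. I would therefore present the short induction above, or simply refer to the proof of Lemma \ref{lem:essnpresc}.
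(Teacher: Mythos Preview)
Your argument is correct and is exactly the approach the paper takes: the paper states that Lemma~\ref{lem:denZ2} is proved in the same way as Lemma~\ref{lem:essnpresc}, which in turn is said to ``follow directly from the definition of $T$ and $\overline{T}$''---your induction simply spells out this direct verification. The index bookkeeping $\overline{T}^n(f)=T^{n+s_n}(f)$ that you carry along is indeed the content of Lemma~\ref{lem:fromUtoTZ2}, so nothing is missing.
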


\begin{lem}\label{lem:fromUtoTZ2}
Let $f \in \mathbb{Z}_2$.
\begin{enumerate}
    \item Let $n \geq 1$ and let $s$ be the number of $1$ in ${\bf v}^{\T}_{f,n}$. Then we have $\overline{T}^n(f)=T^{n+s}(f)$. Moreover, ${\bf v}^T_{f,n+s}$ is obtained from ${\bf v}^{\T}_{f,n}$ by inserting $0$ after each $1$.
    \item Let $n \geq 1$. Assume that ${\bf v}^T_f(n-1)=0$ and let $s$ denote the number of $1$ in ${\bf v}^T_{f,n}$. Then we have $T^n(f)=\overline{T}^{n-s}(f)$. Moreover, ${\bf v}^{T}_{f,n-s}$ is obtained from ${\bf v}^T_{f,n}$ by removing the zero coordinate following any $1$. 
\end{enumerate}
\end{lem}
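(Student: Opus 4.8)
The plan is to reduce everything to the elementary two-step relation between $T$ and $\overline{T}$ on $\mathbb{Z}_2$: if $f$ is even then $\overline{T}(f)=f/2=T(f)$, while if $f$ is odd then $3f+1$ is even, so $\overline{T}(f)=(3f+1)/2=T(T(f))=T^2(f)$. In a single formula, for every $f\in\mathbb{Z}_2$ we have $\overline{T}(f)=T^{1+f_0}(f)$, since $f_0\in\{0,1\}$. Both parts of the lemma are obtained by iterating this identity and keeping track of how many odd terms have been passed; this is formally the same argument as the proof of Lemma \ref{lem:backforth}.

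For part (1) I would induct on $n$. The base case $n=1$ is precisely $\overline{T}(f)=T^{1+f_0}(f)$, and here $s=f_0$ is the number of $1$'s in ${\bf v}^{\overline{T}}_{f,1}$. For the inductive step, set $g=\overline{T}^n(f)$; by hypothesis $g=T^{n+s}(f)$, where $s$ counts the $1$'s among $\overline{T}^0(f)_0,\dots,\overline{T}^{n-1}(f)_0$. Writing $w_n:=\overline{T}^n(f)_0=g_0$ and applying the base case to $g$ gives
\[
\overline{T}^{n+1}(f)=\overline{T}(g)=T^{1+w_n}(g)=T^{n+s+1+w_n}(f).
\]
Since the number of $1$'s among the first $n+1$ entries of ${\bf v}^{\overline{T}}_f$ is $s+w_n$, the right-hand side is $T^{(n+1)+(s+w_n)}(f)$, completing the induction. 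The ``moreover'' clause then follows from Lemma \ref{lem:denZ2}: applying the operation ``insert a $0$ after every $1$'' to the first $n$ entries of ${\bf v}^{\overline{T}}_f$ produces a string of length $n+s$, and because that operation processes entries from left to right it yields exactly the first $n+s$ entries of ${\bf v}^T_f$.

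For part (2) the point is to use the hypothesis ${\bf v}^T_f(n-1)=0$ together with the zero-density of ${\bf v}^T_f$ (Lemma \ref{lem:denZ2}) to split the first $n$ steps of the $T$-orbit into complete blocks. Every index $i<n$ with $T^i(f)_0=1$ satisfies $i\le n-2$, because the entry at position $n-1$ is $0$, and is immediately followed, by zero-density, by an index $i+1\le n-1$ with $T^{i+1}(f)_0=0$. Thus the string $T^0(f)_0,\dots,T^{n-1}(f)_0$ decomposes into $s$ pairs $(1,0)$ and $n-2s$ lone $0$'s. On an odd term a pair of consecutive $T$-steps equals one $\overline{T}$-step, and on an even term a single $T$-step equals one $\overline{T}$-step; composing along the orbit, $T^n(f)=\overline{T}^{\,s+(n-2s)}(f)=\overline{T}^{\,n-s}(f)$. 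The ``moreover'' clause again follows from Lemma \ref{lem:denZ2}, since the $s$ zeros deleted to pass from ${\bf v}^T_{f,n}$ to ${\bf v}^{\overline{T}}_{f,n-s}$ are exactly the ones sitting immediately after the $s$ ones among the first $n$ entries.

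The only genuine obstacle is the bookkeeping in part (2): one must be sure the last block is complete, i.e.\ that no $1$ sits at position $n-1$ with its partner $0$ falling outside the window $\{0,\dots,n-1\}$. This is exactly what the hypothesis ${\bf v}^T_f(n-1)=0$ rules out. Apart from that, both arguments are routine inductions and countings.
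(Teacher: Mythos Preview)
Your argument is correct and is essentially the paper's intended proof: the paper merely declares the proof identical to that of Lemma~\ref{lem:backforth}, and the underlying reasoning there is exactly your two-step identity $\overline{T}(f)=T^{1+f_0}(f)$ iterated along the orbit, together with the block decomposition of ${\bf v}^T_{f,n}$ into $(1,0)$-pairs and lone $0$'s (with the hypothesis ${\bf v}^T_f(n-1)=0$ guaranteeing completeness of the final block). Your write-up simply makes the induction and the bookkeeping explicit.
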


\begin{prop}\label{prop:cycfreeZ2}
For each cyclically zero dense vector ${\bf v}=(v_0,v_1,\dots,v_{n-1}) \in \{0,1\}^n$ ($n \geq 2$), there exists a unique $f \in \Omega_n$ such that ${\bf v}^T_{f,n}={\bf v}$.
\end{prop}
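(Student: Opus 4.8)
The statement to prove is Proposition \ref{prop:cycfreeZ2}: for each cyclically zero dense vector $\mathbf{v} = (v_0, \dots, v_{n-1}) \in \{0,1\}^n$ with $n \geq 2$, there is a unique $f \in \Omega_n$ with $\mathbf{v}^T_{f,n} = \mathbf{v}$. The paper itself signals the strategy: the proofs of Lemma \ref{lem:denZ2}, Lemma \ref{lem:fromUtoTZ2}, Proposition \ref{prop:cycfreeZ2} are ``exactly the same'' as for Lemma \ref{lem:essnpresc}, Lemma \ref{lem:backforth}, Corollary \ref{prop:cycfree}. So the plan is to transplant the argument of Corollary \ref{prop:cycfree} verbatim, replacing the ring $R$ and the map $\overline{T}_v$ with their $2$-adic analogues, and using Proposition \ref{prop:z2period} in place of Proposition \ref{prop:prescribconsterm}.

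First I would establish existence. Since $\mathbf{v}$ is cyclically zero dense, its infinite periodic concatenation is zero dense, so in particular if $v_{n-1} \neq 0$ then $v_0 = 0$; more to the point, between any two $1$'s (cyclically) there is a $0$. Let $s$ be the number of $1$'s among $v_0, \dots, v_{n-1}$. Apply Proposition \ref{prop:z2period} to $\mathbf{v}$ to produce the unique $g \in \overline{\Omega}_{n}$ with $\mathbf{v}^{\overline{T}}_{g,n} = \mathbf{v}$; then by Lemma \ref{lem:fromUtoTZ2}(1), $\overline{T}^{n}(g) = T^{n+s}(g)$, hence $g = \overline{T}^n(g) = T^{n+s}(g)$, so $g$ is $T$-periodic. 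But this gives a cycle of length $n+s$, not $n$ — the point is that the $T$-orbit of $g$, read off through $\mathbf{v}^T_g$, is exactly $\mathbf{v}$ with a $0$ inserted after each $1$ (again by Lemma \ref{lem:fromUtoTZ2}(1)), which is a cyclically zero dense word of length $n+s$; this is not what we want. The correct construction is the reverse: I should instead take $\mathbf{v}$ itself (a zero dense word of length $n$, since cyclic zero density implies zero density of one period as long as — care needed — we only assert $v_i = 0$ or $v_{i+1} = 0$ for $i \le n-2$, which holds), remove the $0$ after each $1$ to get a word $\mathbf{w} \in \{0,1\}^{n-s}$, apply Proposition \ref{prop:z2period} to $\mathbf{w}$ to get $f \in \overline{\Omega}_{n-s}$ with $\mathbf{v}^{\overline{T}}_{f, n-s} = \mathbf{w}$, and then invoke Lemma \ref{lem:fromUtoTZ2}(2) with the observation $\mathbf{v}^T_f(n-1) = 0$ (which holds because the last letter of $\mathbf{v}$ is $0$ by cyclic zero density when $v_{n-1} = 1$ would force — wait, cyclic zero density forces $v_{n-2}=0$ and $v_0 = 0$ but not $v_{n-1}=0$). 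I need to handle the case $v_{n-1} = 1$ separately: since $\mathbf{v}$ is cyclically zero dense of period $n \geq 2$ we may cyclically rotate $\mathbf{v}$ to a representative ending in $0$ (such a rotation exists unless $\mathbf{v} = (1,1,\dots)$, impossible, or $n=1$), prove the statement for that rotation, and then transport back along the $T$-orbit. After this reduction, Lemma \ref{lem:fromUtoTZ2}(2) gives $T^n(f) = \overline{T}^{n-s}(f) = f$, so $f \in \Omega_n$, and the ``moreover'' clause of that lemma gives $\mathbf{v}^T_{f,n} = \mathbf{v}$.

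For uniqueness, suppose $f, f' \in \Omega_n$ both have $\mathbf{v}^T_{\cdot, n} = \mathbf{v}$. After the same rotation as above I may assume $v_{n-1} = 0$, so by Lemma \ref{lem:fromUtoTZ2}(2) both $f$ and $f'$ lie in $\overline{\Omega}_{n-s}$ and have $\mathbf{v}^{\overline{T}}_{\cdot, n-s} = \mathbf{w}$, where $\mathbf{w}$ is $\mathbf{v}$ with trailing-$0$-after-$1$ removed. The uniqueness clause of Proposition \ref{prop:z2period} then forces $f = f'$.

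\textbf{Main obstacle.} The genuine content is bookkeeping rather than a hard idea: carefully tracking how the insertion/deletion of a $0$ after each $1$ converts zero dense words of length $n$ into arbitrary words of length $n-s$ (and conversely), checking that cyclic zero density of $\mathbf{v}$ is exactly what is needed so that the reduction to a representative ending in $0$ is legitimate and so that Lemma \ref{lem:fromUtoTZ2}(2) applies, and confirming that $s$ (the number of $1$'s) is consistent throughout the orbit. The subtlety is precisely the rotation step — one must check that $\Omega_n$, $\mathbf{v}^T$, and cyclic zero density are all invariant under applying $T$ once (shift of $\mathbf{v}$), which follows from $T$ being a bijection on each finite $T$-cycle and from Lemma \ref{lem:denZ2}; given the paper's assurance that the proof is ``exactly the same'' as Corollary \ref{prop:cycfree}, none of this is expected to present real difficulty.
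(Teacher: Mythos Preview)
Your approach is correct and matches the paper's: the proof of Proposition~\ref{prop:cycfreeZ2} is obtained by transplanting the argument of Corollary~\ref{prop:cycfree}, using Proposition~\ref{prop:z2period} in place of Proposition~\ref{prop:prescribconsterm} and Lemma~\ref{lem:fromUtoTZ2} in place of Lemma~\ref{lem:backforth}. The rotation to a representative with $v_{n-1}=0$ and the bookkeeping you describe are exactly what is needed.

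One small correction to the plan: in the existence step you invoke part~(2) of Lemma~\ref{lem:fromUtoTZ2}, but that part takes $T$-data as input and produces $\overline{T}$-data, whereas at that point you only know $\mathbf{v}^{\overline{T}}_{f,n-s}=\mathbf{w}$ (from Proposition~\ref{prop:z2period}) and want to conclude $\mathbf{v}^{T}_{f,n}=\mathbf{v}$. Appealing to ``$\mathbf{v}^T_f(n-1)=0$ because the last letter of $\mathbf{v}$ is $0$'' is circular, since $\mathbf{v}^{T}_{f,n}=\mathbf{v}$ is what you are trying to prove. The fix is simply to use part~(1) instead: with $s$ equal to the number of $1$'s in $\mathbf{w}$ (which equals the number of $1$'s in $\mathbf{v}$, since only $0$'s were deleted), part~(1) gives $\overline{T}^{\,n-s}(f)=T^{(n-s)+s}(f)=T^n(f)$, hence $f\in\Omega_n$, and its ``moreover'' clause says $\mathbf{v}^T_{f,n}$ is obtained from $\mathbf{w}$ by inserting a $0$ after each $1$, which recovers $\mathbf{v}$ precisely because $\mathbf{v}$ is zero dense with $v_{n-1}=0$. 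Your use of part~(2) for the uniqueness step is correct as written.
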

With Lemma \ref{lem:denZ2}, Lemma \ref{lem:fromUtoTZ2} and Proposition \ref{prop:cycfreeZ2} in place of Lemma \ref{lem:essnpresc}, Lemma \ref{lem:backforth} and Corollary \ref{prop:cycfree} respectively, all the results in \S\ref{subs:finirin} hold true for $\mathbb{Z}_2$ without any change in the proofs by replacing $q$ with $2$. We thus get Theorem \ref{main_z2} which we recall:
\vskip 1mm
\noindent
{\bf Theorem \ref{main_z2}.} Let $n \geq 1$. Then the number $Z_n$ of $T$-cycles of length $n$ in $\mathbb{Z}_2$ is finite. Moreover, we have 
\begin{align}\label{ZnforZ2}
Z_n=\frac{1}{n}\sum_{d|n}\mu(d) \left( \phi^{\frac{n}{d}}+\psi^{\frac{n}{d}}\right),
\end{align}
where $\phi=\frac{1+\sqrt{5}}{2}$ and $\psi=\frac{1-\sqrt{5}}{2}$.
Asymptotically, we have
\begin{equation}\label{eq:asymZ2}
Z_n \sim \frac{\phi^n}{n},
\end{equation} as $n \rightarrow  \infty$.
\vskip 1mm
\noindent

To illustrate Theorem \ref{main_z2}, we computed the first terms:
\begin{itemize}
\item $Z_1=1$ which corresponds to the fixed point $0$,
\item $Z_2=1$ which corresponds to the cycle $(-1,-2)$,
\item $Z_3=1$ which corresponds to the cycle $(1,4,2)$,
\item $Z_4=1$ which corresponds to the cycle $\left(\frac{1}{5},\frac{8}{5},\frac{4}{5},\frac{2}{5}\right)$,
\item $Z_5=2$ which corresponds to the cycles $(-10,-5,-14,-7,-20)$ and $\left(\frac{8}{13},\frac{4}{13},\frac{2}{13},\frac{1}{13},\frac{16}{13} \right)$.
\end{itemize}
\begin{rmk}
The sequence $(Z_n)_{n \geq 1}$ is the same as the sequence A006206 in OEIS \cite{Oeis2024}.
\end{rmk}

\section*{Acknowledgements}
The first author is grateful for the support of a Technion fellowship, of an Open University of Israel post-doctoral fellowship, and of the Israel Science Foundation (grant no. 353/21). The authors would like to thank the referees for their careful reading and their suggestions that helped improve the presentation of the paper.


\begin{thebibliography}{}
\bibitem{Aki04} Akin, E. (2004), Why is the $3x+1$ problem hard?. Contemporary Mathematics 356, 1-20.
\bibitem{BP23} Behajaina, A. \& Paran, E. (2023), The {C}ollatz problem in $\F_p[x]$ and $\F_p[[x]]$. Finite Fields and their Applications 91, article 102265.

\bibitem{Hicks} Hicks, K. \& Mullen, G. L. \& Yucas, J. L. \& Zavislak, R. (2008), A polynomial analogue of the $3n+1$ problem.  The American Mathematical Monthly 115 (7), 615-622.

\bibitem{Lag90} Lagarias, J. C. (1990), The set of rational cycles for the $3x + 1$ problem. Acta arithmetica, LVI.

\bibitem{Lagarias2010} Lagarias, J. C. (2010), The Ultimate Challenge: The $3x+1$ Problem.

\bibitem{LM95} Lind, D. \& Marcus, B. (1995), An {I}ntroduction to {S}ymbolic {D}ynamics and {C}oding. Cambridge university press. 
\bibitem{Oeis2024} OEIS Foundation Inc. (2024), The {O}n-{L}ine {E}ncyclopedia of {I}nteger {S}equences. Published electronically at \url{https://oeis.org}.
\bibitem{Roz19} Rozier, O. (2019), Parity sequences of the $3x+1$ map on the $2$-adic integers and {E}ucliden embedding. Integers 19 (A8), 1-25.

\bibitem{Rowland2018} Rowland, E. (2018), A matrix generalization of a theorem of {Fine}. Integers 18A.

\bibitem{Tao2022} Tao, T. (2022), Almost all orbits of the Collatz map attain almost bounded values. Forum of Mathematics Pi 10 (e12), 1-56.

\end{thebibliography}
\end{document}